\newcommand{\N}{{\mathds{N}}}
\newcommand{\Z}{{\mathds{Z}}}
\newcommand{\R}{{\mathds{R}}}
\newcommand{\C}{{\mathds{C}}}
\newcommand{\A}{{\mathfrak{A}}}
\newcommand{\B}{{\mathfrak{B}}}
\newcommand{\M}{{\mathfrak{M}}}
\newcommand{\Lip}{{\mathsf{L}}}
\newcommand{\cBall}[3]{{\mathscr{B}_{#1}(#2,#3)}}
\newcommand{\Kantorovich}[1]{{\mathsf{mk}_{#1}}}
\newcommand{\boundedlipschitz}[1]{{\mathsf{bl}_{#1}}}
\newcommand{\StateSpace}{{\mathscr{S}}}
\newcommand{\unital}[1]{{\mathfrak{u}{#1}}}
\newcommand{\mongekant}{{extended Mon\-ge-Kan\-to\-ro\-vich metric}}
\newcommand{\qms}{quantum locally compact metric space}
\newcommand{\qmss}{quantum locally compact separable metric space}
\newcommand{\tu}{{topographic uniform topology}}
\newcommand{\tut}{{\textbf{\emph{tu}}}}
\newcommand{\unit}{1}
\newcommand{\pnorm}[1]{{\mathsf{P}_{#1}}}
\newcommand{\sa}[1]{{\mathfrak{sa}({#1})}}
\newcommand{\mulip}{{\mathfrak{L}_1}}
\newcommand{\indicator}[1]{{\chi_{#1}}}
\newcommand{\compacts}[1]{{\mathcal{K}\left({#1}\right)}}
\newcommand{\bigslant}[2]{{\raisebox{.2em}{$#1$}\left/\raisebox{-.2em}{$#2$}\right.}}
\newcommand{\corner}[2]{{\indicator{#1}{#2}\indicator{#1}}}
\theoremstyle{plain}
\newtheorem{theorem}{Theorem}[section]
\newtheorem{condition}[theorem]{Condition}
\newtheorem{corollary}[theorem]{Corollary}
\newtheorem{lemma}[theorem]{Lemma}
\newtheorem{proposition}[theorem]{Proposition}
\theoremstyle{definition}
\newtheorem{definition}[theorem]{Definition}
\newtheorem{notation}[theorem]{Notation}
\newtheorem{convention}[theorem]{Convention}
\theoremstyle{remark}
\newtheorem{counterexample}[theorem]{Counterexample}
\newtheorem{remark}[theorem]{Remark}
\renewcommand{\geq}{\geqslant}
\renewcommand{\leq}{\leqslant}
\numberwithin{equation}{section}
\begin{document}
\title[Quantum Locally Compact Metric Spaces]{Quantum Locally Compact Metric Spaces}
\author{Fr\'{e}d\'{e}ric Latr\'{e}moli\`{e}re}
\email{frederic@math.du.edu}
\urladdr{http://www.math.du.edu/\symbol{126}frederic}
\address{Department of Mathematics \\ University of Denver \\ Denver CO 80208}
\dedicatory{Dedicated to Marc Rieffel.}

\date{\today}
\subjclass[2000]{Primary:  46L89, 46L30.}
\keywords{Noncommutative metric geometry, Monge-Kantorovich distance, non-unital C*-algebras, Quantum Metric Spaces, Lip-norms, Moyal planes.}

\begin{abstract}
We introduce the notion of a {\qms}, which is the noncommutative analogue of a locally compact metric space, and generalize to the non-unital setting the notion of quantum metric spaces introduced by Rieffel. We then provide several examples of such structures, including the Moyal plane, compact quantum metric spaces and locally compact metric spaces. This paper provides an answer to the question raised in the literature about the proper notion of a quantum metric space in the nonunital setup and offers important insights into noncommutative geometry for non compact quantum spaces.
\end{abstract}
\maketitle


\section{Introduction}

Noncommutative metric geometry is the study of noncommutative generalizations of algebras of Lipschitz functions on metric spaces. Inspired by the work of Connes \cite{Connes89,Connes}, Rieffel introduced in \cite{Rieffel98a, Rieffel99} the notion of a compact quantum metric space and in \cite{Rieffel00} a generalization of the Gromov-Hausdorff distance \cite{Gromov}, thus providing in \cite{Latremoliere05, Rieffel01} a meaning to many approximations of classical and quantum spaces by matrix algebras found in the physics literature (see for instance \cite{Connes97,tHooft02}), and pioneering a new set of techniques in the study of the geometry of C*-algebras (a sample of which is \cite{Ozawa05,Rieffel02,Rieffel08,Rieffel10, Rieffel10b}). Our work in this paper offers an answer to the problem of finding a noncommutative analogue for Lipschitz algebras on \emph{locally compact metric spaces}, in preparation for the notion of Gromov-Hausdorff convergence for {\qms s} which we lay out in a coming paper \cite{Latremoliere12c}. Non-compact locally compact quantum spaces with candidates for metrics arise naturally in various domains. In physics, the Moyal plane \cite{Cagnache11, Varilly04, Martinetti11} has been equipped with a natural spectral triple and we prove in this paper that it gives the Moyal plane the structure of an unbounded {\qms}. Other examples of interest whose study is postponed to later papers, but which hint at the scope of the notion we introduce in this article, are the C*-algebra for space-time uncertainty relations \cite{Doplicher95,Rieffel96}, cross-products on non-compact spaces \cite{Bellissard10}, C*-algebras of foliations and many more. Thus in particular, our work provides the foundation for metric noncommutative geometry in all these contexts. 

Our work is based on the extension to the noncommutative setting of the following picture. Let $(X,m)$ be a locally compact metric space. For any function $f : X\rightarrow \R$, we define the Lipschitz constant of $f$ as:
\begin{equation*}
\mathsf{Lip}_m(f) = \sup \left\{ \frac{|f(y)-f(x)|}{m(y,x)} : x,y \in X, x\not= y \right\}\text{.}
\end{equation*}
The function $\mathsf{Lip}_m$ is a seminorm on its domain $\mathfrak{L}=\{f : X \rightarrow \R : \mathsf{Lip}_m(f) < \infty \}$, and this domain is in fact a subalgebra of the algebra of all $\R$-valued continuous functions on $X$. If, in particular, $(X,m)$ is compact, then $\mathfrak{L}$  is a a dense subalgebra of the self-adjoint part of the C*-algebra $C(X)$ of $\C$-valued continuous functions on $X$. If $(X,m)$ is locally compact but not compact, then Gel'fand duality theory suggests that the proper replacement for $C(X)$ is the C*-algebra $C_0(X)$ of continuous $\C$-valued functions on $X$, vanishing at infinity. In this case, the subset $\mathfrak{L}_0$ of $\mathfrak{L}$ consisting of Lipschitz functions vanishing at infinity, is a dense subalgebra of the self-adjoint part of $C_0(X)$.

A central observation to noncommutative metric geometry is that the distance $m$ can be recovered from its associated Lipschitz seminorm. To this end, one constructs a metric on the state space $\StateSpace(C_0(X))$, i.e. the set of all integrals against Radon probability measures on $X$. While there are many metrics on this space \cite{Billingsley,Dudley}, the one of particular interest for our purpose is the {\mongekant} introduced by Kantorovich in \cite{Kantorovich40}. Kantorovich and Rubinstein proved in \cite{Kantorovich58} that this metric can be defined as:
\begin{multline}\label{wasskant-intro}
\Kantorovich{\mathsf{Lip}_m} : \mu,\nu \in \StateSpace(C_0(X)) \longmapsto \\ \sup \{ |\mu(f)-\nu(f) | : f \in C_0(X) \text{ and } \mathsf{Lip}_m(f) \leq 1 \} \text{.}
\end{multline}

It is easy to check that the restriction of $\Kantorovich{\mathsf{Lip}_m}$ to the set $X$ identified with the set of Dirac probability measures over $X$ is indeed $m$. Moreover, $\Kantorovich{\mathsf{Lip}_m}$ has a very good topological property when $(X,m)$ is compact: the topology it induces on $\StateSpace(C(X))$ is the weak* topology.

The analogue of a compact metric space, introduced by Rieffel in \cite{Rieffel98a}, is thus a pair $(\A,\Lip)$ of an order-unit space $\A$ (i.e. a subspace of the self-adjoint part of a unital C*-algebra, containing the unit) and a densely defined seminorm $\Lip$ on $\A$ with the properties that the distance defined on the state space $\StateSpace(\A)$ of $\A$ by the analogue of Identity (\ref{wasskant-intro}), with $\mathsf{Lip}_m$ replaced with $\Lip$ and $C_0(X)$ by $\A$, gives $\StateSpace(\A)$ a finite radius and the weak* topology. This leads to a rich theory as illustrated, for example, in \cite{Rieffel00,Rieffel01,Latremoliere05}.

However, difficulties arise when trying to find a noncommutative analogue of a non-compact, locally compact metric space  \cite{Latremoliere05b}. The first and evident problem is that the state space of a nonunital C*-algebra is not a weak* locally compact space. The second matter is that the Monge-Kantorovich construction no longer gives a distance, but rather an extended metric, i.e. it takes the value $\infty$ on some pairs of probability measures. Last, even if one restricts attention to bounded subsets of the state space for the {\mongekant}, the topology induced by the metric is usually strictly stronger than the weak* topology. All these matters are attributable to one main feature of the non-compact case: points, and more generally probability measures, can escape at infinity.

In \cite{Latremoliere05b}, a first approach to this problem was chosen, where the {\mongekant} is replaced by a bounded form called the bounded-Lip\-schitz distance. A summary of the main result of our paper \cite{Latremoliere05b} for our current purpose is given by:

\begin{theorem}[\cite{Latremoliere05b}]\label{BoundedLipschitz}
Let $\A$ be a separable C*-algebra with norm $\|\cdot\|_\A$ and let $\B$ be a bounded total subset of the self-adjoint part of $\A$. For any two states $\varphi,\psi$ of $\A$ we define:
\begin{equation*}
\boundedlipschitz{\B}(\varphi,\psi) = \sup \{ |\varphi(a) - \psi(a)| : a \in \B \} \text{.}
\end{equation*}
Then $\boundedlipschitz{\B}$ is a distance on the state space $\StateSpace(\A)$ and the following are equivalent:
\begin{itemize}
\item The distance $\boundedlipschitz{\B}$ metrizes the restriction of the weak* topology $\sigma(\A^\ast,\A)$ to $\StateSpace(\A)$,
\item There exists a strictly positive $h$ of $\A$ such that the set $h\B h$ is totally bounded for $\|\cdot\|_{\A}$,
\item For any strictly positive element $h$ of $\A$, the set $h\B h$ is totally bounded for $\|\cdot\|_{\A}$,
\item For all positive $a,b \in \A$, the set $a\B b$ is totally bounded for $\|\cdot\|_{\A}$,
\item The set $\B$ is totally bounded in the \emph{weakly uniform topology} on $\A$.
\end{itemize}
\end{theorem}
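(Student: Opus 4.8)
The plan is as follows. First, the metric axioms for $\boundedlipschitz{\B}$ are quickly verified: finiteness is the norm-boundedness of $\B$, the pseudo-metric axioms hold since $\boundedlipschitz{\B}$ is a supremum of the pseudo-metrics $(\varphi,\psi)\mapsto|\varphi(a)-\psi(a)|$, and points are separated because the real span of $\B$ is dense in $\sa{\A}$, so $\varphi$ and $\psi$ agreeing on $\B$ must agree on $\A$. That same density, together with $\|\varphi_n\|\leq 1$, shows at once that $\boundedlipschitz{\B}$-convergence of a sequence of states implies weak* convergence; hence the $\boundedlipschitz{\B}$-topology on $\StateSpace(\A)$ is always finer than the relative weak* topology. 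As $\A$ is separable, the latter is metrizable, so the first assertion in the list is equivalent to its converse: \emph{every weak*-convergent sequence of states is $\boundedlipschitz{\B}$-convergent}, i.e. weak* convergence is uniform on $\B$. I would record this reformulation at the outset and use it throughout.

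Next I would treat the equivalences among the last four conditions, which are C*-algebraic. The fourth gives the third on taking $a=b=h$, and the third gives the second since a separable C*-algebra has strictly positive elements. The fourth and fifth are equivalent on unwinding the definition of the weakly uniform uniformity --- the one generated by the semi-norms $x\mapsto\|axb\|$ with $a,b$ positive --- using that a finite intersection of finite sets realizing total boundedness for single semi-norms realizes it for a finite family of them. For the second $\Rightarrow$ the fourth I would use that a strictly positive $h$ satisfies $\overline{h\A}=\overline{\A h}=\A$: given positive $a,b$, write $a=\lim_k x_k h$ and $b=\lim_k h y_k$, so that $a\B b$ is approximated, uniformly over the bounded set $\B$, by the sets $x_k(h\B h)y_k$, each totally bounded as a two-sided multiple of the totally bounded set $h\B h$; and a set uniformly approximable by totally bounded sets is itself totally bounded.

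It then remains to prove that the first condition is equivalent to the second. For second $\Rightarrow$ first, fix a strictly positive $h$ with $h\B h$ totally bounded and put $e_\delta=h(h+\delta)^{-1}\in\A$ for $\delta>0$, an approximate identity of $\A$. For fixed $\delta$ the set $e_\delta\B e_\delta=(h+\delta)^{-1}(h\B h)(h+\delta)^{-1}$ is totally bounded, so any weak*-convergent sequence $\varphi_n\to\varphi$ of states converges uniformly on it. On the other hand, writing $b-e_\delta b e_\delta=(1-e_\delta)b+e_\delta b(1-e_\delta)$ and applying Cauchy--Schwarz for a state $\psi$ yields $|\psi(b-e_\delta b e_\delta)|\leq 2M(1-\psi(e_\delta))^{1/2}$, where $M=\sup_{b\in\B}\|b\|$; since $\psi(e_\delta)\uparrow 1$ for every state $\psi$, given $\epsilon>0$ one first chooses $\delta$ making this quantity small for $\psi=\varphi$ and, after discarding finitely many $n$ (using $\varphi_n(e_\delta)\to\varphi(e_\delta)$), for all the $\varphi_n$, uniformly in $b$. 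Combining the two observations gives $\sup_{b\in\B}|\varphi_n(b)-\varphi(b)|\to 0$, which is the first condition.

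The heart of the proof is the converse, first $\Rightarrow$ second, and I would in fact establish the (formally stronger) third condition: fixing any strictly positive $h$, show $h\B h$ totally bounded. The idea is to compactify. Let $Q$ be the weak*-closure of $\StateSpace(\A)$ in $\A^*$ --- compact and metrizable, equal to the quasi-state space when $\A$ is non-unital --- and note that $\|c\|_\A=\sup_{\omega\in Q}|\omega(c)|$ for self-adjoint $c$, so $b\mapsto\widehat{hbh}$, with $\widehat{hbh}(\omega):=\omega(hbh)$, embeds $h\B h$ isometrically into $C(Q)$. By Arzel\`a--Ascoli it suffices to prove the bounded family $\{\widehat{hbh}:b\in\B\}$ equicontinuous at each $\omega_0\in Q$. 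When $\omega_0(h^2)>0$, the localization $\omega\mapsto\widetilde\omega:=h\omega h/\omega(h^2)$ --- a genuine state, since $\|h\omega h\|=\omega(h^2)$ --- is a weak*-continuous map of a weak*-neighbourhood of $\omega_0$ into $\StateSpace(\A)$, hence, by the first condition, continuous into $(\StateSpace(\A),\boundedlipschitz{\B})$; since $\omega(hbh)=\omega(h^2)\,\widetilde\omega(b)$ and $\omega\mapsto\omega(h^2)$ is weak*-continuous, equicontinuity of $\{\widehat{hbh}\}_b$ at $\omega_0$ follows. When $\omega_0(h^2)=0$, Cauchy--Schwarz gives $|\omega(hbh)|\leq M\,\omega(h^2)$ for all $\omega\in Q$ and $b\in\B$, so the family is equicontinuous at $\omega_0$ on the neighbourhood $\{\omega:\omega(h^2)<\epsilon/M\}$. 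Hence $\{\widehat{hbh}\}_b$ is equicontinuous on $Q$, totally bounded in $C(Q)$, and $h\B h$ is totally bounded. The genuine difficulty is precisely this last step: the first condition controls the evaluations $\omega\mapsto\omega(b)$ only on the state space itself and says nothing near the ``boundary at infinity'' $Q\setminus\StateSpace(\A)$; the strictly positive $h$ is exactly what tames that boundary, either through the estimate $|\omega(hbh)|\leq M\omega(h^2)$ or by transporting, via the localization map, a boundary point to a genuine state where the first condition can be applied.
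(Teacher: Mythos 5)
This theorem is imported verbatim from \cite{Latremoliere05b}; the present paper gives no proof of it, so your argument can only be measured against its own internal correctness and against the techniques the paper uses elsewhere. On that basis, the bulk of your proof is sound. The metric axioms, the fact that $\boundedlipschitz{\B}$-convergence always implies weak* convergence, the chain (4)$\Rightarrow$(3)$\Rightarrow$(2)$\Rightarrow$(4) via $\overline{\A h}=\A$, and the implication (2)$\Rightarrow$(1) using $e_\delta=h(h+\delta)^{-1}$ together with the Cauchy--Schwarz estimate $|\psi(b-e_\delta b e_\delta)|\leq 2M(1-\psi(e_\delta))^{1/2}$ are all correct. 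Your argument for the hard direction (1)$\Rightarrow$(3) --- embedding $h\B h$ into $C(Q)$ over the quasi-state space and proving equicontinuity through the localization $\omega\mapsto h\omega h/\omega(h^2)$ (a genuine state since $\|h\omega h\|=\omega(h^2)$), with the two cases $\omega_0(h^2)>0$ and $\omega_0(h^2)=0$ --- is a clean and legitimate compactness argument, and it uses precisely the localization and Cauchy--Schwarz devices that recur in Proposition \ref{local-state-space-dense-prop} and Theorems \ref{qms-characterization-via-lu-topology-thm} and \ref{qms-characterization-separable-thm}.

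The one genuine gap is your handling of the fifth condition. You declare the weakly uniform uniformity to be ``the one generated by the semi-norms $x\mapsto\|axb\|$ with $a,b$ positive'' and then dispose of (4)$\Leftrightarrow$(5) as a matter of unwinding definitions. That is not the definition: the weakly uniform topology is generated by the seminorms $p_{\mathscr{K}}:x\mapsto\sup\{|\varphi(x)|:\varphi\in\mathscr{K}\}$ indexed by weak* compact subsets $\mathscr{K}$ of $\StateSpace(\A)$, as recalled in this paper, and the agreement on bounded sets between this topology and the one you describe is itself a nontrivial theorem of \cite{Latremoliere05b}. The missing equivalence is provable with tools you already deploy: for (4)$\Rightarrow$(5), a Dini-type argument on the compact set $\mathscr{K}$ gives $\sup_{\varphi\in\mathscr{K}}(1-\varphi(e_\delta))\to 0$, whence $p_{\mathscr{K}}(b-e_\delta b e_\delta)\leq 2M\sup_{\varphi\in\mathscr{K}}(1-\varphi(e_\delta))^{1/2}$ reduces $p_{\mathscr{K}}$-total boundedness of $\B$ to norm total boundedness of $e_\delta\B e_\delta$; for (5)$\Rightarrow$(2) one observes that $\mathscr{K}_\delta=\{h\varphi h/\varphi(h^2):\varphi\in Q,\ \varphi(h^2)\geq\delta\}$ is a weak* compact set of states and that $|\varphi(hxh)|\leq\|x\|\,\delta$ whenever $\varphi(h^2)<\delta$, so that $\|h(b-b')h\|_\A\leq\max\bigl\{\|h\|^2\,p_{\mathscr{K}_\delta}(b-b'),\,2M\delta\bigr\}$. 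As written, however, this step is asserted rather than proved, and rests on a definition the paper does not use.
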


This theorem was a consequence of the metrizability on bounded subsets of the weakly-uniform topology introduced in \cite{Latremoliere05b}. This theorem is particularly well-suited to define noncompact finite-diameter locally compact quantum metric spaces in the spirit of \cite{Rieffel98a}. Indeed, assume that we are given a nonunital C*-algebra $\A$ and a norm $\Lip$ defined on a dense subset of the self-adjoint part $\sa{\A}$ of $\A$, and we set $\B = \{ a \in \sa{\A} : \Lip(a) \leq 1 \}$. Then, should $\B$ be norm bounded in $\A$, the distance $\boundedlipschitz{\B}$, which is the Monge-Kantorovich metric associated with $\Lip$, gives $\StateSpace(\A)$ a finite diameter, and Theorem (\ref{BoundedLipschitz}) provides us with a criterion for the topology induced by $\boundedlipschitz{\B}$ to be the weak* topology on $\StateSpace(\A)$. 

One solution to the difficulties which arise when the set $\B = \{a \in \sa{\A} : \Lip(a) \leq 1\}$ is no longer norm bounded, such as the fact $\boundedlipschitz{\B}$ is no longer a metric nor does the topology it induces on $\StateSpace(\A)$ agree with the weak* topology, is to replace the {\mongekant} by the family of Bounded-Lipschitz metrics $(\boundedlipschitz{\B_r})_{r > 0}$ where $\B_r = \{ a \in \A : \Lip(a)\leq 1\text{ and } \|a\|_\A \leq r\}$ for all real numbers $r>0$. This was the approach we studied in \cite{Latremoliere05b}, and using Theorem (\ref{BoundedLipschitz}), one get a criterion for all these metrics to metrize the weak* topology on the whole state space.

In this paper, we address the natural question left open by the bounded-Lip\-schitz approach of \cite{Latremoliere05b}, which concerns using the {\mongekant} in the setting of general locally compact quantum metric spaces, with no restriction of diameter or substitution of metrics. The main motivation lies with the construction of a Gromov-Hausdorff convergence theory \cite{Gromov} for noncommutative geometries even in the non-compact case.  In some sense, the bounded-Lip\-schitz approach artificially restricts the geometry of the underlying quantum space and only sees the space ``locally'', i.e it does not allow to recover the full metric in the classical case: for instance, if $\A=C_0(\R)$ and $\Lip$ is the Lipschitz seminorm for the ordinary metric of $\R$, then the bounded-Lip\-schitz $\boundedlipschitz{\mathfrak{B}_1}$ endowed $\R$ with the distance $x,y \mapsto \min\{|x-y|,1\}$. Thus, the {\mongekant} seems the proper tool to consider for noncommutative metric geometry, despite its occasional ill-behavior.

The solution we offer in this paper is based on the fundamental observation of Dobrushin \cite[Theorem 2]{Dobrushin70} regarding the {\mongekant} for non-compact metric spaces. The key is that this extended metric is well-behaved when restricted to subsets of the state space with good behavior at infinity.  To frame this discussion, it is useful to recall that, by the Prohorov's theorem \cite{prohorov56,Dudley}, a subset $\mathscr{P}$ of the space of Borel probability measures over a locally compact space $X$ has the property that its weak* closure is still a set of Borel probability measures if and only if it is \emph{tight}, meaning:
\begin{equation*}
\forall\varepsilon > 0 \;\; \exists K \subseteq X\;\;\text{$K$ is compact}\text{ and } (\forall \mu \in \mathscr{P} \;\; \mu(X\setminus K) \leq \varepsilon) \text{.}
\end{equation*}
This criterion is not metric, however, and there are in general weak* compact subsets of the state space of $C_0(X)$ which are not metrized by the restriction of the {\mongekant} (see Counterexample \ref{noncompact-closed-balls-cex}). Dobrushin introduces in \cite{Dobrushin70} a stronger and natural form of tightness adapted to the metric situation. A set $\mathscr{P}$ of Borel probability measures on a locally compact metric space $(X,m)$ is \emph{Dobrushin-tight} if, for some $x_0 \in X$, we have:
\begin{equation}\label{Dobrushin-tight-intro}
\lim_{r \rightarrow \infty} \sup \left\{ \int_{\{x \in X : m(x,x_0) \geq r\}} m(x,x_0)\,d\mu(x) : \mu \in \mathscr{P} \right\} = 0\text{.}
\end{equation}
This notion does not depend on the choice of the base point $x_0$, by the triangle inequality. We note that a Dobrushin-tight set is tight when $(X,m)$ has infinite diameter, though all sets of probability measures are Dobrushin-tight on a bounded locally compact metric space. Dobrushin proves in \cite[Theorem 2]{Dobrushin70} that the {\mongekant} restricted to a Dobrushin-tight set induces the relative weak* topology on this set.

We are thus led in this paper to introduce a notion of a good behavior at infinity of a set of states of a C*-algebra for metric purposes. Unlike in the commutative case described above, however, a generic way to talk about behavior at infinity within a C*-algebra does not seem to exist --- a problem already encountered for quite a different problem in \cite{Akemann89}, for instance.

Moreover, since our main intent is to provide a framework for quantum Gro\-mov-Hausdorff distance, we are faced with the need for a notion of locality as well. Indeed, convergence for locally compact metric spaces in the sense of Gromov \cite{Gromov} requires to work in the category of pointed metric spaces, rather than just metric spaces. Convergence is then defined in terms of convergence of closed balls around the chosen based points. Thus, we have to understand how to describe a local structure of a C*-algebra which allows for a definition of behavior at infinity appropriate for a generalization of Dobrushin tightness.  Of course, the notion of local behavior is in essence what becomes ambiguous when going from the commutative to the noncommutative world. 

In the noncommutative world, our suggestion is to pick a favored set of ``observables'' for which it is possible to talk about locality --- i.e. a commutative set --- and accept that our notions of locality and behavior at infinity will depend on this choice, as those observables which do not commute with our chosen set can typically be ``spread'' all over our quantum space. Therefore, we consider a \emph{topographic quantum space} as a pair $(\A,\M)$ where $\A$ is a C*-algebra and $\M$ is an Abelian C*-subalgebra of $\A$ which contains an approximate unit of $\A$. The Abelian nature of $\M$ allows for local definitions while the existence of an approximate unit allows for the discussion of behavior at infinity. Such a structure is a at the root of our work in this paper. The terminology we chose is inspired by the picture we shall see as characteristic of the separable case, where in fact, we will pick our Abelian C*-subalgebras of the form $C^\ast(h)$ with $h$ a strictly positive element which plays the role of an ``altitude'' function, with the level sets drawing a sort of topographic map of the underlying space. 

With these ingredients, it becomes possible to formulate a definition for quantum locally compact metric spaces. Our paper progresses toward and reaches this goal as follows.

We start with some needed results about Lipschitz pairs, i.e. C*-algebras whose unitization carries on its self-adjoint part a densely defined seminorm which vanishes only on the scalar multiple of the unit. We define the {\mongekant} in this context and prove that it gives a finer topology than the weak* topology. We then discuss useful properties of topographic quantum spaces, which lay the foundations for our notion of well-behaved sets of states. When put together, Lipschitz pairs and topographic quantum spaces form Lipschitz triples, which have the same signature as {\qms s} and where we can define the concept of tame sets of states, which are our analogues of Dobrushin-tight sets, though a stronger notion even in the commutative case, as a tame set is always tight.

We are then able to introduce the notion of a {\qms}. Following Rieffel's terminology, the Lipschitz seminorm of a {\qms} is called a Lip-norm. The main purpose of the third section is to provide several characterizations of {\qms s} among Lipschitz triples by means of a topological requirement on a subset of the C*-algebra associated with the Lip-norm. The main tool is the construction of a bridge topology which is inspired by the weakly uniform topology of \cite{Latremoliere05b}, though it is typically weaker and, more importantly, it depends on the topographic structure. This bridge topology leads us to our main characterization for {\qms}. We then derive two more characterizations, one which fit naturally in the C*-algebraic context, and one for {\qmss s}, where the situation is somewhat simpler and more elegant than in the general case.

We conclude with some examples. The most important of our examples is the last one, which shows that the Moyal plane, together with the so-called isospectral noncommutative geometry \cite{Varilly04}, is indeed a {\qmss}. The metric structure of the Moyal plane has attracted a lot of attention lately (e.g. \cite{Cagnache11,Martinetti11,Varilly04}), so our work brings a new component to this active area of research. Our example section also includes the basic but fundamental examples of locally compact metric spaces, compact quantum metric spaces, and all {\qms} with finite diameter fitting our work in \cite{Latremoliere05b}, such as \cite{Bellissard10}. 

We refer the reader to the reference book of G. Pedersen \cite{Pedersen79} for all the basic definitions about C*-algebras, their unitizations, and their state space.

\section{The noncommutative Monge-Kantorovich Distance}

This section introduces various substructures involved in our final definition of a {\qms}. The following notation will be used throughout this paper:
\begin{notation}
Let $\A$ be a C*-algebra. The norm of $\A$ is denoted $\|\cdot\|_\A$ and the state space of $\A$ is denoted by $\StateSpace(\A)$. The set of self-adjoint elements of $\A$ is denoted by $\sa{\A}$.
\end{notation}

\subsection{Lipschitz Pairs}

At the root of our work is a pair $(\A,\Lip)$ of a C*-algebra and a seminorm $\Lip$ enjoying various properties. We start with the most minimal assumptions on $\Lip$ in this section and introduce the context of the rest of this paper.

\begin{notation}
Let $\A$ be a C*-algebra. The smallest unital C*-algebra containing $\A$, i.e. either $\A$ if $\A$ is unital, or its standard unitization $\A\oplus\C$ \cite{Pedersen79} otherwise, is denoted by $\unital{\A}$. The unit of $\unital{\A}$ is always denoted by $\unit_{\unital{\A}}$. Note that $\sa{\unital{\A}} = \sa{\A} \oplus \R\unit_{\unital{\A}}$ if $\A$ is not unital. Any state $\varphi$ of $\A$ has a unique extension $\varphi'$ to a state of $\unital{\A}$ with $\varphi'(\unit_{\unital{\A}})=1$, and we shall always identify $\varphi$ and $\varphi'$ in this paper without further mention. Under this identification, the state space of $\unital{\A}$ equals to the quasi-state space of $\A$, and the weak* topology $\sigma(\A^\ast,\A)$ on $\StateSpace(\A)$ agrees with the weak* topology $\sigma(\unital{\A}^\ast,\unital{\A})$ restricted to $\StateSpace(\A)$.
\end{notation}

\begin{definition}
A \emph{Lipschitz pair} $(\A,\Lip)$ is a C*-algebra $\A$ and a seminorm $\Lip$ defined on a dense subspace of $\sa{\unital\A}$ such that $\{ a \in \sa{\unital{\A}} : \Lip(a) = 0 \} = \R \unit_{\unital{\A}}$.
\end{definition}

The metric which will be the focus of all our attention is:

\begin{definition}
The \emph{{\mongekant}} associated to a Lipschitz pair $(\A,\Lip)$ is the extended metric defined on the state space $\StateSpace(\A)$ of $\A$ by:
\begin{equation*}
\Kantorovich{\Lip} : \varphi, \psi \in \StateSpace(\A) \longmapsto \sup \{ |\varphi(a) - \psi(a)| : a\in\sa{\A} \text{ and } \Lip(a)\leq 1 \} \text{.}
\end{equation*}
\end{definition}

\begin{remark}
Let $(\A,\Lip)$ be a Lipschitz pair. The symmetry and triangle inequality properties of the {\mongekant} are easy to establish. The fact $\Kantorovich{\Lip}(\varphi,\psi) = 0 \Rightarrow \varphi = \psi$ for any two states $\varphi,\psi\in\StateSpace(\A)$ follows from the density of the domain of $\Lip$. Thus, $\Kantorovich{\Lip}$ satisfies the axiom of a metric, except that it may take the value $\infty$. Hence the term ``extended metric''.
\end{remark}

This extended metric has a long history and many names. It would probably be fair to name it the Monge-Kantorovich-Rubinstein-Wasserstein-Dobrushin metric. It finds its origins in the transportation problem introduced and studied by Monge in 1781. In 1940, a first formulation for this metric was introduced by Kantorovich in \cite{Kantorovich40} motivated by Monge's transportation problem. The form we use was derived in 1958 by Kantorovich and Rubinstein in \cite{Kantorovich58}. Later, Wasserstein introduced this metric for probabilities over a compact metric space again in \cite{Wasserstein69}. This metric was then extended and studied by Dobrushin on non-compact spaces in \cite{Dobrushin70}, where the name Vasershtein metric first appeared (Vasershtein is an alternative spelling for the translation from Cyrillic to Latin alphabets for Wasserstein). We choose our naming convention based on the original appearance of this metric in one form or another and admit to some arbitrariness in the matter. Our choice for a name follows Rieffel's convention as well.

For any Lipschitz pair $(\A,\Lip)$, when working in the unitization $\unital{\A}$ of $\A$, rather than $\A$, one can translate elements $a$ with $\Lip(a)<\infty$ by any scalar multiple of $\unit_{\unital{\A}}$, without changing the value of $\Lip$. This allows us to give an often useful expression for {\mongekant} which will play a central role in our characterization of {\qms}.

\begin{notation}
Let $(\A,\Lip)$ be a Lipschitz pair and $\mu$ a state of $\A$. We set:
\begin{equation*}
\mulip(\A,\Lip,\mu) = \{ a \in \sa{\unital{\A}} : \Lip(a)\leq 1\text{ and } \mu(a) = 0  \} \text{.}
\end{equation*}
\end{notation}

\begin{proposition}\label{wasskant-alt-expression-prop}
Let $(\A,\Lip)$ be a Lipschitz pair and $\mu$ be a state of $\A$. For any $\varphi,\psi \in \StateSpace(\A)$ we have:
\begin{equation*}
\Kantorovich{\Lip}(\varphi,\psi) = \sup \{ |\varphi(a)-\psi(a)| : a \in \mulip(\A,\Lip,\mu) \} \text{.}
\end{equation*}
\end{proposition}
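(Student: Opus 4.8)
The plan is to exploit the translation invariance of $\Lip$ under scalar multiples of $\unit_{\unital\A}$ together with the fact that every state of $\A$, once extended to $\unital\A$, takes the value $1$ on $\unit_{\unital\A}$. Combined, these say that neither the quantity $|\varphi(a)-\psi(a)|$ nor the constraint $\Lip(a)\leq 1$ changes when $a$ is translated by a scalar multiple of $\unit_{\unital\A}$, which is precisely what lets us pass freely between the constraint ``$a\in\sa\A$'' defining $\Kantorovich{\Lip}$ and the constraint ``$a\in\sa{\unital\A}$ with $\mu(a)=0$'' defining the right-hand side.

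First I would record the two structural facts to be used. Since $\Lip$ vanishes precisely on $\R\unit_{\unital\A}$, this line lies in the domain $\mathrm{dom}(\Lip)$, which is a subspace of $\sa{\unital\A}$; hence for $a\in\mathrm{dom}(\Lip)$ and $t\in\R$ we have $a+t\unit_{\unital\A}\in\mathrm{dom}(\Lip)$ and $\Lip(a+t\unit_{\unital\A})=\Lip(a)$. Second, under the standing identification of a state with its canonical extension to $\unital\A$, we have $\varphi(\unit_{\unital\A})=1$ for every $\varphi\in\StateSpace(\A)$, and therefore $\varphi(a+t\unit_{\unital\A})-\psi(a+t\unit_{\unital\A})=\varphi(a)-\psi(a)$ for all $a\in\sa{\unital\A}$, all $t\in\R$, and all $\varphi,\psi\in\StateSpace(\A)$.

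For the inequality $\Kantorovich{\Lip}(\varphi,\psi)\leq\sup\{|\varphi(a)-\psi(a)| : a\in\mulip(\A,\Lip,\mu)\}$, I would take an arbitrary $a\in\sa\A$ with $\Lip(a)\leq 1$ and set $b=a-\mu(a)\unit_{\unital\A}$. Then $b\in\sa{\unital\A}$, $\Lip(b)=\Lip(a)\leq 1$, and $\mu(b)=\mu(a)-\mu(a)\mu(\unit_{\unital\A})=0$, so $b\in\mulip(\A,\Lip,\mu)$; moreover $|\varphi(b)-\psi(b)|=|\varphi(a)-\psi(a)|$ by the second fact above, so $|\varphi(a)-\psi(a)|$ is bounded by the supremum on the right. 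Taking the supremum over all admissible $a$ yields the inequality. For the reverse inequality, I would take $b\in\mulip(\A,\Lip,\mu)$ and write $b=a+t\unit_{\unital\A}$ with $a\in\sa\A$ and $t\in\R$ --- this is the decomposition $\sa{\unital\A}=\sa\A\oplus\R\unit_{\unital\A}$ in the nonunital case, and trivially $a=b$, $t=0$ in the unital case. Then $a\in\mathrm{dom}(\Lip)$ with $\Lip(a)=\Lip(b)\leq 1$, and $|\varphi(b)-\psi(b)|=|\varphi(a)-\psi(a)|\leq\Kantorovich{\Lip}(\varphi,\psi)$; taking the supremum over $b\in\mulip(\A,\Lip,\mu)$ completes the argument.

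There is no genuine obstacle here: the statement is essentially a reparametrization of a supremum. The only points demanding care are the bookkeeping in the nonunital case --- namely that the condition $\mu(a)=0$ must be read with respect to the canonical extension of $\mu$ to $\unital\A$ (so that $\mu(\unit_{\unital\A})=1$), and that the splitting $\sa{\unital\A}=\sa\A\oplus\R\unit_{\unital\A}$ is compatible with $\mathrm{dom}(\Lip)$, which is guaranteed because $\R\unit_{\unital\A}\subseteq\mathrm{dom}(\Lip)$ and $\mathrm{dom}(\Lip)$ is a linear subspace.
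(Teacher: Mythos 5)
Your proof is correct and follows essentially the same route as the paper's: the translation invariance $\Lip(a+t\unit_{\unital{\A}})=\Lip(a)$ (from $\Lip(\unit_{\unital{\A}})=0$ and the seminorm triangle inequality), the normalization $\varphi(\unit_{\unital{\A}})=1$ for extended states, and the passage between $a\in\sa{\A}$ with $\Lip(a)\leq 1$ and its centered translate $a-\mu(a)\unit_{\unital{\A}}\in\mulip(\A,\Lip,\mu)$ via the splitting $\sa{\unital{\A}}=\sa{\A}\oplus\R\unit_{\unital{\A}}$. The only cosmetic difference is that you argue the two inequalities directly rather than first establishing the set identity $\{a-\mu(a)\unit_{\unital{\A}} : a\in\sa{\A},\ \Lip(a)\leq 1\}=\mulip(\A,\Lip,\mu)$ as the paper does.
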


\begin{proof}
First, note that for all $a\in \sa{\unital{A}}$ with $\Lip(a)<\infty$, we have, for any $\lambda\in\R$:
\begin{align*}
\Lip(a) &= \Lip(a+\lambda\unit_{\unital{\A}}-\lambda\unit_{\unital{\A}}) \leq \Lip(a+\lambda\unit_{\unital{\A}}) + |\lambda|\Lip(\unit_{\unital{\A}})\\
&= \Lip(a+\lambda\unit_{\unital{\A}}) \leq \Lip(a)+|\lambda|\Lip(\unit_{\unital{\A}}) = \Lip(a)\text{.}
\end{align*}
Hence $\Lip(a)=\Lip(a+\lambda\unit_{\unital{\A}})$ for all $a\in\sa{\unital{\A}}$ with $\Lip(a)<\infty$ and $\lambda\in\R$. 
Thus, if $a\in\sa{\A}$ with $\Lip(a)\leq 1$ is given, then $a-\mu(a)\unit_{\unital{\A}} \in \mulip(\A,\Lip,\mu)$ (note that $\mu(a) \in \R$). Conversely, if $b \in \mulip(\A,\Lip,\mu)$ then $\Lip(b)\leq 1$ and $b = a + \lambda \unit_{\unital{\A}}$ for some $\lambda \in \R$ and $a \in \sa{\A}$ by definition of $\unital{\A}$. Since $\mu(b) = 0$ we have $\lambda = - \mu(a)$. Thus:
\begin{equation}
\{ a - \mu(a)\unit_{\unital{\A}} : a \in \sa{\A}\text{ and } \Lip(a)\leq 1 \} = \mulip(\A,\Lip,\mu) \text{.}
\end{equation}
Thus, using $\nu(\lambda\unit_{\unital{\A}})=\lambda$ for all $\lambda\in\C$ and $\nu\in\StateSpace(\A)$, we have:
\begin{align*}
\Kantorovich{\Lip}(\varphi,\psi) &=  \sup\{ |\varphi(a)-\psi(a)| : a\in\sa{\A}\text{ and }\Lip(a)\leq 1\}\\
&= \sup\{ |\varphi(a-\mu(a)\unit_{\unital{\A}})-\psi(a-\mu(a)\unit_{\unital{\A}})| : a\in\sa{\A}\text{ and }\Lip(a)\leq 1\}\\
&=  \sup\{ |\varphi(b)-\psi(b)| : b \in \mulip(\A,\Lip,\mu) \}\text{,}
\end{align*}
as desired.
\end{proof}

\begin{remark}
By abuse of notation, in examples, if $\A$ is not unital and $\Lip$ is a densely defined \emph{norm} on $\sa{\A}$, we may refer to $(\A,\Lip)$ as a Lipschitz pair with the implicit understanding that when necessary, we will work with the extension $\unital{\Lip}$ of $\Lip$ to $\sa{\unital{\A}}$ given by $\unital{\Lip}(a+\lambda \unit_{\unital{\A}}) = \Lip(a)$ for all $a\in\sa{\A}$ and $\lambda\in\R$ --- since $(\A,\unital{\Lip})$ is then indeed a Lipschitz pair. We will even abuse notation further by writing $\Lip$ for $\unital{\Lip}$. This obvious terminology extension will be carried out implicitly to all structures including a Lipschitz pair.
\end{remark}

\begin{remark}
Let $(\A,\Lip)$ be a Lipschitz pair. If $\A$ is not unital, the proof of Proposition (\ref{wasskant-alt-expression-prop}) also applies if $\mu$ is the state $\mu : a+\lambda\unit_\A \in \unital{\A} \mapsto \lambda$. Otherwise, $\A = \unital{\A}$, so either way we get:
\begin{equation*}
\Kantorovich{\Lip}(\varphi,\psi) = \sup \{ |\varphi(a)-\psi(a)| : a\in\sa{\unital{\A}} \text{ and } \Lip(a) \leq 1\} 
\end{equation*}
for all $\varphi,\psi \in \StateSpace(\A)$.
\end{remark}

The main matter of this paper is to study the topological properties of the {\mongekant}. We start with:

\begin{proposition}\label{wasskant-finer-than-weak-prop}
Let $(\A,\Lip)$ be a Lipschitz pair. The topology induced by the {\mongekant} $\Kantorovich{\Lip}$ on the state space $\StateSpace(\A)$ of $\A$ is finer than the weak* topology.
\end{proposition}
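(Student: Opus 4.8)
The plan is to show that every $\Kantorovich{\Lip}$-ball contains a weak*-neighborhood's worth of information in the wrong direction — no wait. Actually the correct direction: I must show the identity map from $(\StateSpace(\A), \Kantorovich{\Lip})$ to $(\StateSpace(\A), \sigma(\A^\ast,\A))$ is continuous, i.e. every weak*-open set is $\Kantorovich{\Lip}$-open, equivalently every weak*-convergent-limit is approached in the metric sense — no again; rather, $\Kantorovich{\Lip}$-convergence implies weak*-convergence. So concretely: fix $\varphi_0 \in \StateSpace(\A)$ and a basic weak* neighborhood $U = \{\psi : |\psi(a_i) - \varphi_0(a_i)| < \varepsilon, \ i = 1,\dots,n\}$ with $a_i \in \A$; I want to produce $\delta > 0$ so that $\Kantorovich{\Lip}(\psi,\varphi_0) < \delta$ forces $\psi \in U$.

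First I would reduce to the self-adjoint case: writing $a_i = b_i + \mathrm{i} c_i$ with $b_i, c_i \in \sa{\A}$, it suffices to control $|\psi(a) - \varphi_0(a)|$ for $a \in \sa{\A}$. Next, since the domain of $\Lip$ is dense in $\sa{\unital{\A}}$, I would approximate: given $a \in \sa{\A}$ and $\eta > 0$, pick $a'$ in the domain of $\Lip$ with $\|a - a'\|_\A < \eta$; then $|\psi(a) - \varphi_0(a)| \leq |\psi(a') - \varphi_0(a')| + 2\eta$ for all states, using that states have norm one. Now $a'$ may not satisfy $\Lip(a') \leq 1$, but if $\Lip(a') = L < \infty$ (and $L > 0$, else $a'$ is a scalar and the difference vanishes), then $a'/L \in \sa{\unital{\A}}$ has $\Lip$-seminorm $\leq 1$, so by the definition of $\Kantorovich{\Lip}$ (via the unitization, as in the Remark following Proposition \ref{wasskant-alt-expression-prop}) we get $|\psi(a') - \varphi_0(a')| \leq L\, \Kantorovich{\Lip}(\psi,\varphi_0)$. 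Combining, $|\psi(a) - \varphi_0(a)| \leq L\,\Kantorovich{\Lip}(\psi,\varphi_0) + 2\eta$.

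Then for the given neighborhood $U$ I would, for each $i$, first fix $\eta = \varepsilon/4$, obtain approximants $a_i'$ in the domain of $\Lip$ with seminorms $L_i < \infty$, set $L = \max_i L_i$ (discarding indices with $L_i = 0$), and choose $\delta = \varepsilon/(2(L+1))$. Then $\Kantorovich{\Lip}(\psi,\varphi_0) < \delta$ yields $|\psi(a_i) - \varphi_0(a_i)| < L\delta + \varepsilon/2 \leq \varepsilon$ for each $i$, so $\psi \in U$. This shows the $\Kantorovich{\Lip}$-topology is finer than the weak* topology. The only subtle point — and the step I expect to require the most care — is the handling of the scalar/unit bookkeeping: one must pass to $\unital{\A}$ so that translating $a'$ by a multiple of $\unit_{\unital{\A}}$ (which does not change $\Lip(a')$, by the computation in the proof of Proposition \ref{wasskant-alt-expression-prop}) is legitimate, and one must confirm that the supremum defining $\Kantorovich{\Lip}$ indeed dominates $L^{-1}|\psi(a') - \varphi_0(a')|$ even when $a'$ lives only in $\sa{\unital{\A}}$ rather than $\sa{\A}$; this is exactly what the Remark after Proposition \ref{wasskant-alt-expression-prop} provides. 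Everything else is routine $\varepsilon$-management.
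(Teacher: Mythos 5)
Your proposal is correct and follows essentially the same route as the paper's proof: approximate a given self-adjoint element by an element of the domain of $\Lip$, rescale it to have Lipschitz seminorm at most $1$, bound the difference of states by the {\mongekant} (using the extension of the supremum to $\sa{\unital{\A}}$), and finish with the triangle inequality and a reduction to self-adjoint elements. The only cosmetic difference is that you argue with basic weak* neighborhoods and metric balls directly, whereas the paper phrases the same estimate in terms of $\Kantorovich{\Lip}$-convergent sequences.
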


\begin{proof}
Assume $(\varphi_n)_{n\in\N}$ is a sequence of states of $\A$ converging to \emph{a state} $\varphi \in \StateSpace(\A)$ for $\Kantorovich{\Lip}$. Let $a\in \sa{\A}$. Let $\varepsilon>0$ be given. Since the domain of $\Lip$ is dense in $\sa{\unital{\A}}$, there exists $b \in \sa{\unital{\A}}$ such that $\|a-b\|_{\unital{\A}} \leq \frac{1}{3}\varepsilon$ and $\Lip(b)<\infty$. Let $c = (\max\{\Lip(b),1\})^{-1} b$. Then $\Lip(c) \leq 1$ and thus, by definition of $\Kantorovich{\Lip}$, we have $|\varphi_n(c)-\varphi(c)| \leq \Kantorovich{\Lip}(\varphi_n,\varphi)$. Let $N\in\N$ be chosen so that $\Kantorovich{\Lip}(\varphi_n,\varphi) < \frac{1}{3 \max\{\Lip(b),1\}} \varepsilon$ for all $n\geq N$. Then, for all $n\geq N$:
\begin{align*}
|\varphi_n(a)-\varphi(a)| &\leq |\varphi_n(a)-\varphi_n(b)| + |\varphi_n(b)-\varphi(b) | + |\varphi(b)-\varphi(a)| \\
&\leq \frac{1}{3}\varepsilon + (\max\{\Lip(b),1\})|\varphi_n(c)-\varphi(c)| + \frac{1}{3}\varepsilon \leq \varepsilon \text{.}
\end{align*}
Hence $(\varphi_n(a))_{n\in\N}$ converges to $\varphi(a)$ for all $a\in\sa{\A}$. Since every element $a$ of $\A$ can be written as $a = \Re(a)+i\Im(a)$ with $\Re(a) = \frac{a+a^\ast}{2}$ and $\Im(a)=\frac{a-a^\ast}{2i}$ self-adjoints, we conclude that $(\varphi_n)_{n\in\N}$ weak*-converges to $\varphi$, as desired.
\end{proof}

The topological nature of closed balls for the {\mongekant} is interesting and needs some care in general.

\begin{notation}
For any (extended) metric $m$ on a space $X$, any $x\in X$ and any nonnegative real number $r$, we denote the closed ball $\{ y \in X : m(x,y)\leq r\}$ for $m$ of center $x$ and radius $r$ by $\cBall{m}{x}{r}$.
\end{notation}

\begin{proposition}\label{closed-balls-prop}
Let $(\A,\Lip)$ be a Lipschitz pair. For any $\mu\in\StateSpace(\A)$ and $r \in [0,\infty) \subseteq \R$, the closed ball $\cBall{\Kantorovich{\Lip}}{\mu}{r}$ is closed in the relative topology induced by the weak* topology on $\StateSpace(\A)$.
\end{proposition}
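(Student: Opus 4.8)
The plan is to write the closed ball as an intersection of weak*-closed subsets of $\StateSpace(\A)$, one for each admissible test element. By the very definition of $\Kantorovich{\Lip}$ as a supremum, for a state $\psi\in\StateSpace(\A)$ we have $\Kantorovich{\Lip}(\mu,\psi)\leq r$ if and only if $|\mu(a)-\psi(a)|\leq r$ for every $a\in\sa{\A}$ with $\Lip(a)\leq 1$. Hence
\begin{equation*}
\cBall{\Kantorovich{\Lip}}{\mu}{r} = \bigcap_{\substack{a\in\sa{\A}\\ \Lip(a)\leq 1}} \left\{ \psi\in\StateSpace(\A) : |\mu(a)-\psi(a)|\leq r \right\}\text{,}
\end{equation*}
so it suffices to prove that each set in this intersection is weak*-closed in $\StateSpace(\A)$.

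Fix such an $a$. Since $a\in\A$, the evaluation map $\psi\in\StateSpace(\A)\mapsto \psi(a)$ is continuous for the relative weak* topology $\sigma(\A^\ast,\A)$ by definition of that topology; moreover $\mu(a)\in\R$ is a fixed scalar, and $\psi(a)\in\R$ as well because $a$ is self-adjoint. Therefore $\psi\mapsto |\mu(a)-\psi(a)|$ is a weak*-continuous map from $\StateSpace(\A)$ to $[0,\infty)$, and the set $\{\psi\in\StateSpace(\A):|\mu(a)-\psi(a)|\leq r\}$ is the preimage of the closed interval $[0,r]$, hence weak*-closed in $\StateSpace(\A)$.

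An arbitrary intersection of weak*-closed subsets of $\StateSpace(\A)$ is weak*-closed, so the displayed identity shows that $\cBall{\Kantorovich{\Lip}}{\mu}{r}$ is closed in the relative weak* topology, as claimed. There is essentially no hard step here; the only point requiring a moment's care is the choice of index set for the test elements. One may equally index the intersection over $a\in\mulip(\A,\Lip,\mu)$ and invoke Proposition \ref{wasskant-alt-expression-prop}, the only change being that for $a\in\sa{\unital{\A}}$ one uses the identification of $\StateSpace(\A)$ with a subset of $\StateSpace(\unital{\A})$ together with the fact, recalled in the excerpt, that $\sigma(\A^\ast,\A)$ agrees on $\StateSpace(\A)$ with the restriction of $\sigma(\unital{\A}^\ast,\unital{\A})$; the evaluation maps remain weak*-continuous and the argument is unchanged.
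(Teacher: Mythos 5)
Your proof is correct and takes essentially the same route as the paper's: both reduce to the observation that, for each fixed test element $a\in\sa{\A}$ with $\Lip(a)\leq 1$, the condition $|\mu(a)-\psi(a)|\leq r$ is weak*-closed in $\StateSpace(\A)$, and then pass to the whole family of such $a$. Your packaging as an intersection of closed sets is in fact marginally cleaner, since it sidesteps the paper's use of sequences rather than nets in a topology that need not be metrizable.
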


\begin{proof}
Let $(\varphi_n)_{n\in\N}$ be a sequence in  $\cBall{\Kantorovich{\Lip}}{\mu}{r}$ weak* converging to some \emph{state} $\varphi \in \StateSpace(\A)$. Then, for all $a\in \sa{\A}$ with $\Lip(a)\leq 1$ and all $n \in\N$, we have:
\begin{equation*}
\begin{split}
|\varphi(a) - \mu(a)| &\leq |\varphi(a) - \varphi_n(a) | + |\varphi_n(a)-\mu(a) |\\ &\leq |\varphi(a)-\varphi_n(a)| + r \stackrel{n\rightarrow\infty} {\longrightarrow} r \text{.}
\end{split}
\end{equation*}
Hence $\varphi \in \cBall{\Kantorovich{\Lip}}{\mu}{r}$.
\end{proof}

It is important to note that closed balls for the {\mongekant} are not in general closed in the weak* topology --- Proposition (\ref{closed-balls-prop}) holds for the \emph{relative} topology of the weak* topology on the state space only. 

\begin{counterexample}
\emph{Closed balls for the Monge-Kantorovich metric are not weak* closed in general.} Indeed, let $\A$ be the C*-algebra $C_0((0,1))$ of complex-valued continuous functions on $[0,1]\subseteq \R$ vanishing at $0$ and $1$, and $\Lip$ be the usual Lipschitz seminorm associated to the standard distance on $(0,1)$. Then $(\A,\Lip)$ is a Lipschitz pair. It is easy to check that $\StateSpace(C_0((0,1))) = \cBall{\Kantorovich{\Lip}}{\delta_x}{1}$ for any $x\in (0,1)$, where $\delta_x$ is the Dirac probability measure at $x$. Yet $\StateSpace(C_0((0,1)))$ is not weak* closed as $C_0((0,1))$ is not unital (for instance $\left(\delta_{\frac{1}{n}}\right)_{n\in\N}$ converges to $0$).
\end{counterexample}

Another observation is that closed balls for the {\mongekant}, even when they are weak* compact, are not usually compact for the topology induced by the extended metric, and in general the latter topology does not agree with the trace of the weak* topology on the balls.

\begin{counterexample}\label{noncompact-closed-balls-cex}
\emph{The topology induced by {\mongekant} does not agree with the weak* topology on closed balls in general.} Let $\A = C_0(\R)$ be the C*-algebra of $\C$-valued continuous functions on $\R$. Let $\Lip$ be the usual Lipschitz seminorm for the standard distance on $\R$. For any $x\in \R$, let $\delta_x : f\in C_0(\R) \mapsto f(x)$ be the Dirac state at $x$. Then one checks easily:
\begin{equation}
\Kantorovich{\Lip} \left(\delta_0, \frac{1}{n}\delta_n + \frac{n-1}{n}\delta_0\right) = 1\text{,}
\end{equation}
yet $\left(\frac{1}{n}\delta_n + \frac{n-1}{n}\delta_0\right)_{n\in\N}$ converges in the weak* topology to $\delta_0$. Thus the closed ball $\cBall{\Kantorovich{\Lip}}{\delta_0}{1}$ is not $\Kantorovich{\Lip}$-compact, nor does the topology induced by $\Kantorovich{\Lip}$ agree with the relative topology induced from the weak* topology on $\cBall{\Kantorovich{\Lip}}{\delta_0}{1}$. However, it can be easily checked in this case that $\cBall{\Kantorovich{\Lip}}{\delta_0}{1}$ is weak* compact.
\end{counterexample}

\subsection{Topographic Quantum Spaces}

Another fundamental substructure for our purpose is the notion of a topographic quantum space, i.e. a C*-algebra where the notion of locality, and by extension a notion of approaching infinity, is defined by choosing a large enough commutative set of observables.

\begin{definition}\label{topographic-quantum-space-def}
A \emph{topographic quantum space} $(\A,\M)$ is a C*-algebra $\A$ and an Abel\-ian C*-subalgebra $\M$ such that $\M$ contains an approximate identity for $\A$. When $(\A,\M)$ is a topographic quantum space, the C*-algebra $\M$ is called the \emph{topography} of $(\A,\M)$.
\end{definition}

Our terminology is inspired by a pair of a separable Abelian C*-algebra $\A$ and a strictly positive element $h \in \A$ seen as a ``height function'', with, for each $r > 0$, the level set $h^{-1}([r,\|h\|_\A])$ being compact in the spectrum of $\A$, and the collection of these level sets creating a topographic map of the spectrum of $\A$. Our definition does not pick a strictly positive element in general so that it fits the non-separable case as well.

When working with Abelian C*-algebras, we will use the following notations.

\begin{notation}
Let $\M$ be an Abelian C*-algebra. The Gel'fand spectrum of $\M$, always assumed to be endowed with the weak* topology, is denoted by $\sigma(\M)$. For any subset $B$ of $\sigma(\M)$, we define the indicator function $\indicator{B}$ of $B$ as:
\begin{equation*}
\indicator{B} : x \in \sigma(\M) \mapsto \begin{cases}
1 \;\text{if $x \in B$,}\\
0 \;\text{otherwise.}
\end{cases}
\end{equation*}
Any state $\varphi$ of $\M$ is the integral against a uniquely defined Radon probability measure on $\sigma(\M)$, and this probability measure is still denoted by $\varphi$. Thus we shall simply write $\varphi(B)$ for $\varphi(\indicator{B})$ for any Borel subset $B$ of $\M$.
\end{notation}

We note that $\indicator{B} \in \M^{\ast\ast}$ for any Borel subset $B$ of the Gel'fand spectrum of an Abelian C*-algebra $\M$, where the topological bidual $\M^{\ast\ast}$ of $\M$ is endowed with its Von Neumann algebra structure \cite{Pedersen79}. Now, let $(\A,\M)$ be a topographic quantum space. The Von Neumann algebra $\M^{\ast\ast}$ is identified once and for all with the Von Neumann subalgebra of $\A^{\ast\ast}$ obtained by completing $\M\subseteq \A^{\ast\ast}$ with respect to the strong topology in $\A^{\ast\ast}$ \cite[3.7.8]{Pedersen79}. With this identification, for any topographic quantum space $(\A,\M)$, we thus note that for all Borel subset $B$ of $\sigma(\M)$ and $a\in \unital{A}$, we have $\indicator{B}a\indicator{B} \in \A^{\ast\ast}$. Moreover, by \cite[3.7.8]{Pedersen79}, every state of $\A$ defines  a unique normal state of $\A^{\ast\ast}$ (via $a\in \A^{\ast\ast} \mapsto a(\varphi)$), and we identify these two states in this paper without further mention. 

Let $(\A,\M)$ be a topographic quantum space. If $\A$ is not unital, then $\M \oplus \C \unit_{\unital{\A}} \subseteq \unital{\A}$ is *-isomorphic to $\unital{\M}$. On the other hand, if $\A$ is unital, then any approximate unit of $\A$ converges in norm to $\unit_{\unital{\A}}$, and thus $\unit_{\unital{\A}} \in \M$ since $\M$ is closed. Hence without any ambiguity, we will always use the following convention:

\begin{convention}
Let $(\A,\M)$ be a topographic quantum space. Then $\unit_{\unital{\M}} = \unit_{\unital{\A}}$ and $\unital{\M}$ is the unital Abelian C*-subalgebra $\M + \C \unit_{\unital{\A}} \subseteq \unital{\A}$ of $\unital{\A}$.
\end{convention}

The requirement of existence of an approximate unit in $\M$ for any topographic quantum space $(\A,\M)$ ensures non-degeneracy and that going to infinity in $\M$ can be used to go to infinity in $\A$. We shall often use some specific choices of approximate identities of $\A$ in $\M$, and the following easy corollary of Urysohn's Lemma  will provide these elements when needed. Note however that the existence of the elements provided by this next lemma relies heavily on the fact that $\M$ is Abelian, and at the same time, will be of central importance in the development of our theory. We also use this opportunity to introduce our notation for nets, and a notation for the directed set of compact subsets of a topological space, with inclusion as the dual order.

\begin{notation}
When choosing an arbitrary net $(a_\alpha)_{\alpha \in I}$, the default notation for the order on the directed set $I$ is $\succ$ and the directed set property is expressed with this notation as $\forall \alpha,\beta \in I \; \exists \gamma \in I \;\; \left(\gamma\succ \alpha \text{ and } \gamma\succ \beta\right)$.
\end{notation}

\begin{notation}
Let $X$ be a topological space. The set of all compact subsets of $X$ is denoted by $\compacts{X}$. We note that it is directed set by choosing $\succ$ as the dual order to the inclusion.
\end{notation}

\begin{lemma}\label{topographic-approx-identity-lem}
Let $(\A,\M)$ be a topographic quantum space. There exists a directed set $I$ and a net $(f_\alpha,K_\alpha)_{\alpha \in I}$ of elements in $\sa{\M}\times\compacts{\sigma(\M)}$ such that:
\begin{enumerate}
\item $\{ K_\alpha : \alpha \in I \} = \compacts{\sigma(\M)}$,
\item For all $\alpha\in I$ we have $f_\alpha \indicator{K_\alpha} = \indicator{K_\alpha}$,
\item If $\alpha,\beta \in I$ and $\alpha \succ \beta$ then $f_\alpha f_\beta = f_\beta$,
\item For all $\alpha \in I$, we have $0\leq f_\alpha\leq \unit_{\unital{\A}}$,
\item $(f_\alpha)_{\alpha\in I}$ is an approximate identity for $\A$.
\end{enumerate}
Note that in particular $f_\alpha$ has compact support in $\M$ for all $\alpha \in I$.
\end{lemma}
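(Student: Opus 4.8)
The plan is to build the net by hand out of Urysohn functions. Identify $\M$ with $C_0(\sigma(\M))$ via Gel'fand duality, $\sigma(\M)$ being a locally compact Hausdorff space, so that for self-adjoint $f\in\M$ the condition $0\le f\le\unit_{\unital{\A}}$ means $0\le f\le 1$ pointwise and ``$f$ has compact support'' means $f\in C_c(\sigma(\M))$. I would take as index set
\[
I=\bigl\{(f,K)\in\sa{\M}\times\compacts{\sigma(\M)}:0\le f\le\unit_{\unital{\A}},\ f\text{ has compact support},\ f\indicator{K}=\indicator{K}\bigr\},
\]
set $f_{(f,K)}:=f$ and $K_{(f,K)}:=K$, and declare $(f_1,K_1)\succ(f_2,K_2)$ to mean that $(f_1,K_1)=(f_2,K_2)$ or $\mathrm{supp}(f_2)\subseteq\{x\in\sigma(\M):f_1(x)=1\}$. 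Reflexivity is built in; transitivity follows from the inclusions $\{f=1\}\subseteq\mathrm{supp}(f)$; and $I$ is directed because, given $(f_1,K_1),(f_2,K_2)\in I$, the union $C=K_1\cup K_2\cup\mathrm{supp}(f_1)\cup\mathrm{supp}(f_2)$ is compact, so Urysohn's lemma in $\sigma(\M)$ provides $g\in\sa{\M}$ with compact support, $0\le g\le\unit_{\unital{\A}}$ and $g\equiv 1$ on $C$, whence $(g,K_1\cup K_2)\in I$ dominates both $(f_i,K_i)$.

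Properties (1), (2), (4) and the concluding remark of the statement are then immediate from the definition of $I$ --- for (1) one invokes Urysohn's lemma once more to see that every compact subset of $\sigma(\M)$ occurs as a $K_\alpha$ --- while (3) holds because $\mathrm{supp}(f_\beta)\subseteq\{f_\alpha=1\}$ forces $f_\alpha f_\beta=f_\beta$. The substance is (5). Fix an increasing approximate identity $(e_\lambda)_{\lambda\in\Lambda}$ of $\M$ consisting of positive contractions; since $\M$ contains an approximate identity of $\A$, a standard argument shows $(e_\lambda)$ is also an approximate identity of $\A$. Given $a\in\A$ and $\varepsilon>0$, choose $\lambda$ with $\|e_\lambda a-a\|_{\A}<\tfrac{\varepsilon}{3}$ and $\|ae_\lambda-a\|_{\A}<\tfrac{\varepsilon}{3}$, set $\delta=\tfrac{\varepsilon}{3(\|a\|_{\A}+1)}$, and let $K=\{x\in\sigma(\M):e_\lambda(x)\ge\delta\}$, which is compact since $e_\lambda\in C_0(\sigma(\M))$. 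Pick $\alpha_0\in I$ with $K_{\alpha_0}=K$. For every $\alpha\succ\alpha_0$, properties (2) and (3) give $f_\alpha\indicator{K}=\indicator{K}$, i.e.\ $f_\alpha\equiv 1$ on $K$; hence $(\unit_{\unital{\A}}-f_\alpha)e_\lambda$ vanishes on $K$ and is dominated by $\delta$ off $K$, so $\|f_\alpha e_\lambda-e_\lambda\|_{\A}\le\delta$. Using $\|f_\alpha\|_{\A}\le 1$ and $f_\alpha e_\lambda=e_\lambda f_\alpha$,
\[
\|f_\alpha a-a\|_{\A}\le\|f_\alpha\|_{\A}\,\|a-e_\lambda a\|_{\A}+\|f_\alpha e_\lambda-e_\lambda\|_{\A}\,\|a\|_{\A}+\|e_\lambda a-a\|_{\A}<\tfrac{\varepsilon}{3}+\delta\|a\|_{\A}+\tfrac{\varepsilon}{3}<\varepsilon,
\]
and symmetrically $\|af_\alpha-a\|_{\A}<\varepsilon$; hence $(f_\alpha)_{\alpha\in I}$ is an approximate identity for $\A$.

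I expect the only genuinely non-formal ingredients to be the reduction from ``$\M$ contains an approximate identity of $\A$'' to the existence of an approximate identity of $\A$ made of positive contractions of $\M$, and the estimate $\|(\unit_{\unital{\A}}-f_\alpha)e_\lambda\|_{\A}\le\delta$, which is the mechanism by which a Urysohn function equal to $1$ on a large enough level set of $e_\lambda$ absorbs $e_\lambda$; the remainder is routine bookkeeping with the preorder on $I$ and with Gel'fand duality.
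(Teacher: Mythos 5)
Your proof is correct and follows essentially the same route as the paper: both build the net from compactly supported Urysohn functions indexed by an exhaustion of $\sigma(\M)$ by compacts, ordered so that a larger index's function equals $1$ on the support of a smaller one, and both obtain assertion (5) by combining the approximate identity of $\A$ inside $\M$ with the fact that $f_\alpha$ eventually absorbs each element of $\M$ up to $\varepsilon$. The only differences are cosmetic --- you index by (function, compact) pairs where the paper indexes by (compact, open neighborhood) pairs and chooses the Urysohn function afterward, and you perform the two halves of step (5) in the opposite order.
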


\begin{proof}
Before we prove the existence of our approximate unit of choice, we note that if $(f_\alpha)_{\alpha\in I}$ satisfies Assertions (3),(5) of our proposition, then for all $\alpha \in I$, the function $f_\alpha$ is compactly supported. Indeed, assume that $\M$ is not unital (otherwise all elements of $\M$ obviously have compact support). Assume first that there exists $\gamma \in I$ such that for all $\beta\in I$ we have $\gamma\succ \beta$. Let $g\in \M$. Since $\lim_{\alpha\in I} \|g-gf_\alpha\|_\A = 0$, by definition of convergence for nets, we conclude that $\|g-gf_\gamma\| = 0$. Consequently, $g=gf_\gamma = f_\gamma g$ and thus $f_\gamma = \unit_{\unital{\A}}$. This contradicts our assumption that $\M$ is not unital.  Thus, $\M$ not being unital implies that for all $\alpha \in I$, there exists $\beta\in I$ with $\beta\succ \alpha$. Then the requirement $f_\beta f_\alpha = f_\alpha$ implies that the support of $f_\alpha$ is contained in the compact set $f_\beta^{-1}(\{-1\})$ (note that $f_\beta$ vanishes at infinity on $\sigma(\M)$ as $\M \cong C_0(\sigma(\M))$, which is why $f_\beta^{-1}(\{-1\})$ is not only closed, but also compact).

We now turn to the construction of our approximate unit.

Since $\sigma(\M)$ is locally compact Hausdorff, for any compact subset $K$ of $\sigma(\M)$, there exists an open set $U$ of $\sigma(\M)$ such that $K\subseteq U$ and the closure $\overline{U}$ of $U$ is compact. Indeed, by definition of local compactness, for all $x \in K$, there exists an open neighborhood $U_x$ of $x$ with $\overline{U_x}$ compact, and since $K \subseteq \bigcup_{x\in K} U_x$ and $K$ is compact, there exists a finite subset $F\subseteq K$ with $K\subseteq U = \bigcup_{x\in F} U_x$. Of course $U$ is open as a union of open sets, but since $\overline{U} = \bigcup_{x\in F}\overline{U_x}$ and $F$ is finite, by construction $\overline{U}$ is compact.

Let $I$ be the set:
\begin{equation*}
I = \{ (K,U) : K \in \compacts{\sigma(\M)},\text{ $U$ is open in $\sigma(\M)$, } \overline{U}\in\compacts{\sigma(\M)}\text{ and } K\subseteq U\} \text{,}
\end{equation*}
and define the following relation on $I$:
\begin{equation*}
\forall (K,U),(C,V) \in I \quad\quad (K,U)\succ(C,V) \iff (K=C\text{ and } U=V)\vee(\overline{V}\subseteq K) \text{.}
\end{equation*}
By construction, $\succ$ is reflexive. Moreover, if $(K,U)\succ (C,V)$ and $(C,V) \succ (T,W)$ for any $(K,U)\not= (C,V) \not= (T,W) \in I$, then by definition of $\succ$, we have $\overline{W}\subseteq{C}$ and $\overline{V}\subseteq K$, while by definition of $I$, we have $C\subseteq V\subseteq\overline{V}$ so $\overline{W}\subseteq K$ i.e. $(K,U)\succ(T,W)$. Since transitivity is obvious if either $(C,V)=(T,W)$ or $(C,V) = (K,U)$ we conclude that $\succ$ is a preorder on $I$.

Moreover, let $(K,U),(C,V) \in I$. Let $Q = \overline{U}\cup\overline{V}$ and note that $Q$ is compact in $\sigma(\M)$ by definition of $I$. Hence there exists an open set $W$ of $\sigma(\M)$ with compact closure and such that $Q\subseteq W$. By definition, $(Q,W)\in I$ and $(Q,W)\succ(K,U)$, $(Q,W)\succ (C,V)$. Hence $(I,\succ)$ is a directed set. 

We now denote the first component of $\alpha \in I$ as $K_\alpha \in \compacts{\sigma(\M)}$. By construction, $\compacts{\sigma(\M)} = \{ K_\alpha : \alpha \in I \}$. 

Let $\alpha = (K_\alpha,U_\alpha)\in I$. By Urysohn's lemma for locally compact Hausdorff spaces \cite{Folland}, there exists a continuous function $f\in\sa{M}$ such that $f(x) = 1$ if and only if $x\in K$ and $f(x) \not=0$ if and only if $x\in U$ while $0\leq f(x)\leq 1$ for all $x\in \sigma(\M)$. In particular, $f$ is compactly supported. Call a choice of such a function $f_\alpha$. We thus have constructed a net $(K_\alpha,f_\alpha)_{\alpha \in I}$ which satisfies, by construction, all the required properties. Indeed, $f_\alpha\indicator{K_\alpha} = \indicator{K_\alpha}$ since $f(x)=1$ for $x\in K_\alpha$. If $\alpha\succ \beta$ for $\alpha = (K_\alpha,U_\alpha), \beta = (K_\beta,U_\beta) \in I$ then if $x\not\in U_\beta$ then $f_\beta(x) = 0$ and if $x\in U_\beta$ then $x\in K_\alpha$ so $f_\alpha(x)=1$; either way $f_\alpha(x)f_\beta(x) = f_\beta(x)$. 

We now show that $(f_\alpha)_{\alpha\in I}$ is an approximate unit for $\M$. Let $g \in \M$. Let $\varepsilon > 0$. Since $g$ vanishes at infinity in $\M \cong C_0(\sigma(\M))$, there exists a compact subset $K$ of $\sigma(\M)$ such that, for all $x\in \sigma(\M)\setminus K$ we have $|g(x)|\leq\frac{1}{2}\varepsilon$. Let $\alpha_\varepsilon = (K,U) \in I$ for some open set $U$ containing $K$ and with compact closure. Then for all $\alpha\succ \alpha_\varepsilon$, we have:
\begin{equation*}
\begin{split}
\|g-gf_\alpha\|_\M &\leq \|(g-gf_\alpha)\indicator{K}\|_{\M^{\ast\ast}} + \|(g-gf_\alpha)(\unit_{\unital{\M}}-\indicator{K})\|_{\M^{\ast\ast}}\\
&\leq 0 + 2\|g(\unit_{\unital{\M}}-\indicator{K})\|_{\M^{\ast\ast}} \leq \varepsilon \text{.}
\end{split}
\end{equation*}
Thus $\lim_{\alpha\in I} \|g-gf_\alpha\|_\A = 0$ as desired.

Last, since $(\A,\M)$ is a topographic quantum space, there exists an approximate identity $(e_\beta)_{\beta \in J}$ for $\A$ in $\M$. Let $a\in\A$ and $\varepsilon > 0$ be given. Then, since $(e_\beta)_{\beta\in J}$ is an approximate unit for $\A$, there exists $\beta_\varepsilon \in J$ such that for all $\beta\succ \beta_\varepsilon$ we have $\|a-ae_\beta\|_\A\leq \frac{1}{2}\varepsilon$. Now, there exists $\alpha_\varepsilon \in I$ such that for all $\alpha\in I$ with $\alpha\succ \alpha_\varepsilon$ we have $\|e_{\beta_\varepsilon}-e_{\beta_\varepsilon} f_\alpha\|_\A \leq (2\max\{\|a\|_\A,1\})^{-1} \varepsilon$. Thus, for $\alpha \succ \alpha_\varepsilon$:
\begin{equation*}
\begin{split}
\|a-af_\alpha\|_\A &= \|a(\unit_{\unital{\A}} - f_\alpha)\|_{\A}  \leq \|ae_{\beta_\varepsilon}(\unit_{\unital{\A}}-f_\alpha)\|_\A + \|(a-ae_{\beta_\varepsilon})(\unit_{\unital{\A}}-f_\alpha)\|_\A \\
&\leq \|a\|_\A\|e_{\beta_\varepsilon}(\unit_{\unital{\A}}-f_\alpha)\|_{\A} + \|a-ae_{\beta_\varepsilon}\|_\A\\
&\leq \varepsilon \text{,}
\end{split}
\end{equation*}
so $\lim_{\alpha\in I} \|a-af_\alpha\|_\A = 0$. Consequently, $(a_\alpha)_{\alpha\in I}$ is an approximate unit in $\A$.

This concludes the construction of our special approximate unit.
\end{proof}

We now turn our attention to the notion of a tight set. In classical probability theory, a subset $\mathscr{P}$ of Borel probability measures over a locally compact Hausdorff space $X$ has a weak* closure containing only Borel probability measures if and only if it is (uniformly) tight, namely when for all $\varepsilon > 0$ there exists a compact $K\subseteq X$ such that for all $\mu \in \mathscr{P}$ we have $\mu(X\setminus K) \leq \varepsilon$. A noncommutative analogue for topographic quantum spaces is given as follows:

\begin{definition}\label{tight-def}
Let $(\A,\M)$ be a topographic quantum space. A subset $\mathscr{K}$ of $\StateSpace(\A)$ is \emph{tight} when:
\begin{multline*}
\forall\varepsilon>0\quad\exists Q \in \compacts{\sigma(\M)}\quad\forall K\in \compacts{\sigma(\M)} \\  (Q\subseteq K)\implies \sup \{ \varphi(1-\indicator{K}) : \varphi \in \mathscr{K} \} < \varepsilon \text{.}
\end{multline*}
\end{definition}

\begin{theorem}\label{tight-eq-precompact-thm}
Let $(\A,\M)$ be a topographic quantum space. A subset $\mathscr{K}$ of $\StateSpace(\A)$ is tight if and only if its weak* closure is a weak* compact subset of $\StateSpace(\A)$.
\end{theorem}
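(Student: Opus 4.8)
The plan is to translate tightness into a property of the special approximate unit $(f_\alpha,K_\alpha)_{\alpha\in I}$ produced by Lemma \ref{topographic-approx-identity-lem}, and then to run a compactness argument inside the weak* compact quasi-state space $\StateSpace(\unital{\A})$. If $\A$ is unital then $\M$ is unital, $\sigma(\M)$ is compact, $\StateSpace(\A)=\StateSpace(\unital{\A})$ is weak* compact, and every subset of $\StateSpace(\A)$ is tight (take $Q=\sigma(\M)$ in Definition \ref{tight-def}), so there is nothing to prove; I therefore assume $\A$, hence $\M$, is non-unital, in which case $(I,\succ)$ has no largest element and every $\alpha_0\in I$ admits a strict successor. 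Two elementary facts will be used throughout. (a) Since $f_\alpha f_\beta=f_\beta$ for $\alpha\succ\beta$ and $\M$ is Abelian, $f_\beta\leq f_\alpha$; hence $(f_\alpha)_{\alpha\in I}$ is an increasing net of positive contractions, and for $\psi\in\StateSpace(\unital{\A})$ the net $\bigl(\psi(f_\alpha)\bigr)_{\alpha\in I}$ increases to $\|\psi|_{\A}\|$, so $\psi$ lies in the canonical copy of $\StateSpace(\A)$ inside $\StateSpace(\unital{\A})$ if and only if $\inf_{\alpha\in I}\psi(\unit_{\unital{\A}}-f_\alpha)=0$. (b) By the construction in the proof of Lemma \ref{topographic-approx-identity-lem}, for $\alpha=(K_\alpha,U_\alpha)$ the element $f_\alpha$ equals $1$ on $K_\alpha$, is supported in $\overline{U_\alpha}$, and takes values in $[0,1]$, so $\indicator{K_\alpha}\leq f_\alpha\leq\indicator{\overline{U_\alpha}}$ in $\M^{\ast\ast}$.

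The heart of the argument is the claim that $\mathscr{K}$ is tight if and only if for every $\varepsilon>0$ there is $\alpha_0\in I$ with $\varphi(\unit_{\unital{\A}}-f_\alpha)<\varepsilon$ for all $\varphi\in\mathscr{K}$ and all $\alpha\succ\alpha_0$. The forward direction is immediate from $\indicator{K_\alpha}\leq f_\alpha$ and property (1) of Lemma \ref{topographic-approx-identity-lem}: given $\varepsilon$, take $Q$ from Definition \ref{tight-def} and $\alpha_0$ with $K_{\alpha_0}\supseteq Q$; then $K_\alpha\supseteq Q$ for $\alpha\succ\alpha_0$, whence $\varphi(\unit_{\unital{\A}}-f_\alpha)\leq\varphi(\unit_{\unital{\A}}-\indicator{K_\alpha})<\varepsilon$. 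For the converse, given $\alpha_0$ I would pick a strict successor $\alpha_1\succ\alpha_0$ and set $Q:=\overline{U_{\alpha_1}}$; then any compact $K\supseteq Q$ satisfies $\overline{U_{\alpha_0}}\subseteq K_{\alpha_1}\subseteq U_{\alpha_1}\subseteq K$, so $\overline{U_{\alpha_0}}\subseteq\operatorname{int}K$, and local compactness of $\sigma(\M)$ gives an open $V$ with $\overline{U_{\alpha_0}}\subseteq V\subseteq\overline{V}\subseteq\operatorname{int}K$ and $\overline{V}$ compact; then $\alpha:=(\overline{U_{\alpha_0}},V)\in I$, $\alpha\succ\alpha_0$, and $f_\alpha\leq\indicator{\overline{V}}\leq\indicator{K}$, so $\varphi(\unit_{\unital{\A}}-\indicator{K})\leq\varphi(\unit_{\unital{\A}}-f_\alpha)<\varepsilon$ for all $\varphi\in\mathscr{K}$. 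The step I expect to be the main obstacle is precisely this topological sandwiching — producing a set supported strictly inside an arbitrary compact $K$ that is only assumed to contain $Q$ — and it is exactly why $Q$ must be chosen as the closure of a relatively compact open set rather than a bare compact set; here the Abelianness of $\M$, which makes $\sigma(\M)$ a genuine locally compact Hausdorff space, is essential.

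Granting the claim, suppose first $\mathscr{K}$ is tight. Its weak* closure $\mathscr{L}$ in $\StateSpace(\unital{\A})$ is weak* compact, being closed in a compact set. Fix $\psi\in\mathscr{L}$ and $\varepsilon>0$, choose a net in $\mathscr{K}$ converging weak* to $\psi$ and an $\alpha$ as in the claim, and pass to the limit, which is legitimate since $\unit_{\unital{\A}}-f_\alpha\in\unital{\A}$, to obtain $\psi(\unit_{\unital{\A}}-f_\alpha)\leq\varepsilon$. Hence $\inf_{\alpha}\psi(\unit_{\unital{\A}}-f_\alpha)=0$, so $\psi\in\StateSpace(\A)$ by fact (a), and $\mathscr{L}$ is a weak* compact subset of $\StateSpace(\A)$.

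Conversely, suppose the weak* closure $\mathscr{L}$ of $\mathscr{K}$ is weak* compact and contained in $\StateSpace(\A)$, but $\mathscr{K}$ is not tight. By the claim and the monotonicity in fact (a), there is $\varepsilon>0$ such that for every $\alpha\in I$ one may choose $\varphi_\alpha\in\mathscr{K}$ with $\varphi_\alpha(\unit_{\unital{\A}}-f_\alpha)\geq\varepsilon$. By weak* compactness of $\mathscr{L}$, the net $(\varphi_\alpha)_{\alpha\in I}$ has a subnet converging weak* to some $\psi\in\mathscr{L}$. For fixed $\gamma\in I$, far enough along the subnet we have $\alpha\succ\gamma$, hence $f_\alpha\geq f_\gamma$ and $\varphi_\alpha(\unit_{\unital{\A}}-f_\gamma)\geq\varphi_\alpha(\unit_{\unital{\A}}-f_\alpha)\geq\varepsilon$; passing to the limit gives $\psi(\unit_{\unital{\A}}-f_\gamma)\geq\varepsilon$. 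Since $\gamma$ was arbitrary, $\inf_{\gamma}\psi(\unit_{\unital{\A}}-f_\gamma)\geq\varepsilon>0$, contradicting $\psi\in\StateSpace(\A)$ via fact (a). Hence $\mathscr{K}$ is tight, completing the proof.
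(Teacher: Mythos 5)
Your proof is correct, and it reaches the conclusion by a genuinely different route from the paper, although both arguments pivot on the special approximate unit $(f_\alpha,K_\alpha)_{\alpha\in I}$ of Lemma (\ref{topographic-approx-identity-lem}) and its monotonicity. The paper does not isolate your intermediate claim; instead, in the forward direction it compares $\varphi(\indicator{K_\beta})$ with $\varphi(a_\beta)$ by a Cauchy--Schwarz estimate (hence the $\varepsilon^2/9$ bookkeeping), whereas you replace this with the purely order-theoretic sandwich $\indicator{K_\alpha}\leq f_\alpha\leq\indicator{\overline{U_\alpha}}$, which keeps every estimate inside $\unital{\A}$ and is cleaner. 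In the converse direction the paper applies Dini's theorem to the decreasing net of continuous functions $\varphi\mapsto\varphi(\unit_{\unital{\A}}-a_\alpha)$ on the weak* compact closure; you instead argue by contradiction, extracting a convergent subnet of witnesses $\varphi_\alpha$ with $\varphi_\alpha(\unit_{\unital{\A}}-f_\alpha)\geq\varepsilon$ and showing the limit cannot have norm one on $\A$. These are two faces of the same ``monotone net plus compactness'' phenomenon (Dini's theorem is itself proved by exactly such an extraction), so your version is more self-contained at the cost of a subnet argument. One remark: the step you single out as the main obstacle --- recovering the $\indicator{K}$ form of tightness from the $f_\alpha$ form --- is actually immediate and needs no auxiliary open set $V$: with $Q=\overline{U_{\alpha_1}}$ and any compact $K\supseteq Q$ you already have $f_{\alpha_1}\leq\indicator{\overline{U_{\alpha_1}}}\leq\indicator{K}$, hence $\varphi(\unit_{\unital{\A}}-\indicator{K})\leq\varphi(\unit_{\unital{\A}}-f_{\alpha_1})<\varepsilon$ directly; this is essentially how the paper handles the same passage. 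Your longer construction is valid but superfluous.
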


\begin{proof}
Assume $\mathscr{K}$ is tight. Let $(\varphi_\alpha)_{\alpha\in I}$ be a net in $\mathscr{K}$ weak* converging to $\psi \in \A^\ast$. Note that $\psi$ is a continuous positive linear functional of norm at most $1$, so it is sufficient to show that $\|\psi\|_{\A^\ast} \geq 1$.

Let $a \in \M$ with $\|a\|_\A\leq 1$ . Then for all $\alpha \in I$ and $K\in\compacts{\sigma(\M)}$:
\begin{equation}
\begin{split}
|1-\psi(a)| &\leq |1-\varphi_\alpha(a)| + |\varphi_\alpha(a) - \psi(a)| \\
&\leq |\varphi_\alpha(\unit_{\unital{\A}} - \indicator{K})| + |\varphi_\alpha(\indicator{K}-a)| + |\varphi_\alpha(a) - \psi(a)| \text{.}
\end{split}
\end{equation}

Since $(\A,\M)$ is a topographic quantum space, Lemma (\ref{topographic-approx-identity-lem}) provides us with an approximate unit $(a_\beta)_{\beta \in J}$ for $\A$ in $\sa{\M}$ and a net $(K_\beta)_{\beta \in J}$ such that for all $\beta \in J$, we have $a_\beta\indicator{K_\beta} = \indicator{K_\beta}$ and $0 \leq a_\beta \leq 1$. Moreover, for all $K\in \compacts{\sigma(\M)}$ there exists $\beta \in J$ with $K=K_\beta$.

Let $\epsilon > 0$ and $\varepsilon = \min\{\epsilon,1\}$. Since $\mathscr{K}$ is tight, there exists $\beta \in J$ such that:
\begin{equation}
\sup_{\varphi\in\mathscr{K}}\varphi(\unit_{\unital{\A}} - \indicator{K_\beta})\leq \frac{1}{9}\varepsilon^2 \leq \frac{1}{3}\varepsilon \text{.}
\end{equation}
Moreover for all $\alpha \in I$, by Cauchy-Schwarz's Inequality, we have:
\begin{equation}
\begin{split}
|\varphi_\alpha(\indicator{K_\beta}-a_\beta)| &= |\varphi_\alpha(\indicator{K_\beta}a_\beta - a_\beta)| \leq \sqrt{\varphi_\alpha(\unit_{\unital{\A}} - \indicator{K_\beta})\varphi_\alpha(a_\beta^2)} \\ &\leq \sqrt{\varphi_\alpha(\unit_{\unital{\A}} - \indicator{K_\beta})}\leq \frac{1}{3}\varepsilon \text{.}
\end{split}
\end{equation}
Hence there exists $\beta\in J$ such that for all $\alpha \in I$ we have:
\begin{equation}
|1-\psi(a_\beta)| \leq \frac{2}{3}\varepsilon + |\varphi_\alpha(a_\beta))-\psi(a_\beta)|\text{.}
\end{equation}

Last, by weak convergence and since $a_\beta \in \A$, there exists $\alpha \in I$ such that $|\varphi_\alpha(a_\beta)-\psi(a_\beta)|\leq \frac{1}{3}\varepsilon$. Thus:
\begin{equation*}
\forall\varepsilon >0 \exists \beta \in J\;\; |1-\psi(a_\beta)|\leq \varepsilon\leq \epsilon \text{,}
\end{equation*}
so $\psi$ is a state of $\A$ since $(a_\beta)_{\beta\in J}$ is an approximate identity of $\A$.

Conversely, assume that the weak* closure of $\mathscr{K}$ is a weak* compact subset of $\StateSpace(\A)$. Since $\mathscr{K}$ is a subset of its closure, it is enough to assume $\mathscr{K}$ is weak* compact --- as a subset of a tight set is also tight.

Let $(a_\alpha)_{\alpha \in I}$ and $(K_\alpha)_{\alpha\in I}$ be given as in Lemma (\ref{topographic-approx-identity-lem}). For all $a\in \A$ we define 
\begin{equation*}
\Theta_{\mathscr{K}}(a) : \varphi \in \mathscr{K} \longmapsto \varphi(a)\text{.}
\end{equation*}

The map $\Theta_{\mathscr{K}}$ takes elements of $\A$ to complex-valued weak* continuous functions on $\mathscr{K}$, and, since $\mathscr{K}$ consists of states, i.e. positive linear functionals, $\Theta_{\mathscr{K}}$ is increasing on $\sa{\A}$.

On the other hand, since for all $\alpha,\beta\in I$ with $\alpha\succ\beta$ we have $a_\alpha a_\beta = a_\beta$, we see that for all $x\in \sigma(\M)$, if $a_\beta(x) \not=0$ then $a_\alpha(x) = 1$. Since $0\leq a_\alpha,a_\beta \leq \unit_{\unital{\A}}$, we see that $(a_\alpha)_{\alpha\in I}$ is an increasing net in $\sa{\A}$.

Therefore, $(\Theta_{\mathscr{K}}(\unit_{\unital{\A}} - a_\alpha))_{\alpha\in I}$ is a net of continuous functions on the compact $\mathscr{K}$ pointwise decreasing and pointwise convergent to the continuous function $0$ on $\mathscr{K}$. Hence by Dini's theorem, $(\Theta_{\mathscr{K}}(\unit_{\unital{\A}} - a_\alpha))_{\alpha\in I}$ uniformly converges to $0$. In other words:
\begin{equation}
\lim_{\alpha \in I} \sup \{ \varphi(\unit_{\unital{\A}}-a_\alpha) : \varphi \in \mathscr{K} \} = 0\text{.}
\end{equation}
 Now, let $\varepsilon > 0$. Let $\alpha \in I$ such that $\sup \{ |\varphi(\unit_{\unital{\A}}-a_\alpha)| : \varphi \in \mathscr{K} \} \leq \varepsilon$. By definition of $a_\alpha$, we have $\indicator{K_\beta} \geq a_\alpha$ for any $\beta \in I, \beta \succ \alpha$. Fix such a $\beta \in I, \beta \succ \alpha$. Then for any $K\in\compacts{\sigma(\M)}$ with $K_\beta\subseteq K$ we have $\unit_{\unital{\A}} - \indicator{K} \leq \unit_A - \indicator{K_\beta} \leq \unit_{\unital{\A}} - a_\alpha$, hence:
\begin{equation*}
\sup \{ \varphi(\unit_{\unital{\A}}-\indicator{K}) : \varphi \in \mathscr{K} \}  \leq \varepsilon \text{,}
\end{equation*}
as desired.
\end{proof}

We now can introduce a very important structure associated to a topographic quantum space: the local state space, i.e. a collection of states which, from the perspective of the topography, are indeed locally supported. The local state space will play a central role in our notion of {\qms s}.

\begin{definition}
Let $(\A,\M)$ be a topographic quantum space. Let $K$ be a compact subset of the Gel'fand spectrum $\sigma(\M)$ of $\M$. We define:
\begin{equation}
\begin{split}
\StateSpace(\A|K) &= \{ \varphi \in \StateSpace(\A) : [\varphi]_\M(K) = 1 \} \\
&= \{ \varphi \in\StateSpace(\A) : \varphi(\indicator{K}) = 1 \} \text{,}
\end{split}
\end{equation}

The \emph{local state space} $\StateSpace(\A|\M)$ of a quantum topographic space $(\A,\M)$ is the set:
\begin{equation*}
\bigcup \{ \StateSpace(\A|K) : K \in \compacts{\sigma(\M)} \}\text{,}
\end{equation*} 
whose elements will be called \emph{local states}.
\end{definition}

We observe that for all $K\subseteq \sigma(\M)$ compact, the set $\StateSpace(\A|K)$ is a weak* compact convex face of $\StateSpace(\A)$ \cite{Alfsen01} associated with the compactly supported projection $\indicator{K}$, in the sense of \cite{Akemann89}.

Our definition of a topographic quantum space ensures that the local state space is large, in fact norm dense, in the state space.

\begin{proposition}\label{local-state-space-dense-prop}
The local state space of a topographic quantum space $(\A,\M)$ is norm dense in the state space of $\A$.
\end{proposition}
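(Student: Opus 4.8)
The plan is to show that an arbitrary state $\varphi \in \StateSpace(\A)$ can be approximated in norm by local states, using the special approximate identity $(f_\alpha)_{\alpha\in I}$ constructed in Lemma \ref{topographic-approx-identity-lem}. The key observation is that each $f_\alpha$ is supported on the compact set $K_\alpha$ in the sense that $\indicator{K_\alpha} f_\alpha = f_\alpha$ (so $f_\alpha = \indicator{K_\alpha} f_\alpha \indicator{K_\alpha}$ in $\A^{\ast\ast}$), and $(f_\alpha)$ converges strictly to $\unit_{\unital{\A}}$. So the natural candidate for an approximating local state is the ``compression'' $\psi_\alpha : a \mapsto \varphi(f_\alpha a f_\alpha)/\varphi(f_\alpha^2)$, which is well-defined once $\varphi(f_\alpha^2) > 0$.

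First I would record that $\varphi(f_\alpha) \to 1$: since $(f_\alpha)$ is an approximate identity, $\|a - a f_\alpha\|_\A \to 0$ for every $a \in \A$, and applying $\varphi$ to a suitable positive element (or using that the extension of $\varphi$ to $\A^{\ast\ast}$ is normal and $f_\alpha \uparrow \unit_{\unital{\A}}$ by the monotonicity argument from the proof of Theorem \ref{tight-eq-precompact-thm}) gives $\varphi(f_\alpha) \to 1$, and likewise $\varphi(f_\alpha^2) \to 1$ since $f_\alpha^2 \leq f_\alpha \leq \unit_{\unital{\A}}$ and $f_\alpha^2 \geq 2f_\alpha - \unit_{\unital{\A}}$ forces $\varphi(f_\alpha^2) \geq 2\varphi(f_\alpha) - 1 \to 1$. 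Fix $\varepsilon > 0$ and choose $\alpha$ with $\varphi(f_\alpha^2) > 1/2$ and $1 - \varphi(f_\alpha^2) < \varepsilon$. Define $\psi = \psi_\alpha$ as above. Then $\psi$ is a state of $\A$, and since $f_\alpha = \indicator{K_\alpha} f_\alpha \indicator{K_\alpha}$, for any $a \in \A$ we have $f_\alpha a f_\alpha = \indicator{K_\alpha}(f_\alpha a f_\alpha)\indicator{K_\alpha}$, so $\psi(\indicator{K_\alpha}) = \varphi(f_\alpha \indicator{K_\alpha} f_\alpha)/\varphi(f_\alpha^2) = \varphi(f_\alpha^2)/\varphi(f_\alpha^2) = 1$; hence $\psi \in \StateSpace(\A \mid K_\alpha)$ is a local state.

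It remains to estimate $\|\varphi - \psi\|_{\A^\ast}$. For $a \in \A$ with $\|a\|_\A \leq 1$, write
\begin{equation*}
\varphi(a) - \psi(a) = \left(\varphi(a) - \varphi(f_\alpha a f_\alpha)\right) + \left(1 - \tfrac{1}{\varphi(f_\alpha^2)}\right)\varphi(f_\alpha a f_\alpha).
\end{equation*}
The second term is bounded by $\left|1 - \tfrac{1}{\varphi(f_\alpha^2)}\right| = \tfrac{1-\varphi(f_\alpha^2)}{\varphi(f_\alpha^2)} < 2\varepsilon$. For the first term, insert $f_\alpha$ one factor at a time and use the Cauchy--Schwarz inequality for the state $\varphi$: $|\varphi(a - a f_\alpha)| = |\varphi(a(\unit_{\unital{\A}} - f_\alpha))| \leq \|a\|_\A \sqrt{\varphi((\unit_{\unital{\A}}-f_\alpha)^2)} \leq \sqrt{1 - \varphi(f_\alpha^2)} < \sqrt{\varepsilon}$ (using $(\unit-f_\alpha)^2 \leq \unit - f_\alpha^2$), and similarly $|\varphi(af_\alpha - f_\alpha a f_\alpha)| = |\varphi((\unit_{\unital{\A}} - f_\alpha) a f_\alpha)| \leq \sqrt{\varphi((\unit_{\unital{\A}} - f_\alpha)^2)}\,\|af_\alpha\|_\A \leq \sqrt{\varepsilon}$. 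So $|\varphi(a) - \psi(a)| \leq 2\sqrt{\varepsilon} + 2\varepsilon$, uniformly over the unit ball of $\A$; hence $\|\varphi - \psi\|_{\A^\ast} \to 0$ as $\varepsilon \to 0$. The main (and really only) obstacle is bookkeeping the Cauchy--Schwarz estimates correctly and making sure $\psi$ genuinely lands in $\StateSpace(\A \mid K_\alpha)$, which is exactly where the compact support property $\indicator{K_\alpha} f_\alpha = f_\alpha$ from Lemma \ref{topographic-approx-identity-lem} is used; everything else is routine.
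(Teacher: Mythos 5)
Your argument is sound in outline and the Cauchy--Schwarz bookkeeping is correct, but there is one misquoted property that breaks the locality check as written. Lemma (\ref{topographic-approx-identity-lem}) gives $f_\alpha \indicator{K_\alpha} = \indicator{K_\alpha}$, i.e.\ $f_\alpha \equiv 1$ on $K_\alpha$; it does \emph{not} give $\indicator{K_\alpha} f_\alpha = f_\alpha$, since $f_\alpha$ is in general nonzero on the larger open set $U_\alpha \setminus K_\alpha$ used in its construction. With the actual property one gets $f_\alpha \indicator{K_\alpha} f_\alpha = \indicator{K_\alpha}$, so your computation yields $\psi(\indicator{K_\alpha}) = \varphi(\indicator{K_\alpha})/\varphi(f_\alpha^2)$, which need not equal $1$. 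The repair is immediate, because the same lemma records that each $f_\alpha$ is compactly supported: letting $C_\alpha$ denote the (compact) support of $f_\alpha$, one does have $\indicator{C_\alpha} f_\alpha = f_\alpha$, hence $f_\alpha \indicator{C_\alpha} f_\alpha = f_\alpha^2$ and $\psi(\indicator{C_\alpha}) = 1$, so $\psi \in \StateSpace(\A|C_\alpha)$ is a local state as required. With that correction the rest goes through: $\varphi(f_\alpha^2) \geq 2\varphi(f_\alpha)-1 \to 1$, the splitting of $\varphi(a)-\psi(a)$, and the bound $(\unit_{\unital{\A}}-f_\alpha)^2 \leq \unit_{\unital{\A}} - f_\alpha^2$ are all fine, giving $\|\varphi-\psi\|_{\A^\ast} \leq 2\sqrt{\varepsilon}+2\varepsilon$ uniformly on the unit ball.

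For comparison, the paper compresses by the spectral projection $\indicator{K}$ itself, setting $\psi_K(a) = \varphi(\indicator{K})^{-1}\varphi(\indicator{K}a\indicator{K})$, and obtains $\varphi(\indicator{K})$ close to $1$ from the tightness of the singleton $\{\varphi\}$ via Theorem (\ref{tight-eq-precompact-thm}); the estimate is then a single Cauchy--Schwarz application. Your version stays inside the algebra by compressing with $f_\alpha$ and extracts the mass estimate from the approximate-identity property alone, so it does not need to invoke tightness; the price is the extra step relating $\varphi(f_\alpha^2)$ to $\varphi(f_\alpha)$ and the need to identify correctly the compact set supporting $\psi$. Both are compress-and-renormalize arguments of essentially the same difficulty.
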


\begin{proof}
Let $\varphi \in \StateSpace(\A)$ and $\epsilon > 0$. Let $\varepsilon = \min\left\{\frac{1}{2}\epsilon,\frac{1}{2}\right\}$. Since $\{\varphi\}$ is weak* compact in $\StateSpace(\A)$, it is a tight set, so, there exists $K_0 \in \compacts{\sigma(\M)}$ such that for all $K \in\compacts{\sigma(\M)}$ with $K_0 \subseteq K$, we have $1-\varepsilon^2 \leq \varphi(\indicator{K}) \leq 1$. Since $0\leq\varepsilon\leq\frac{1}{2}$, we thus have $|1-\varphi(\indicator{K})|\leq\varepsilon^2 \leq \varepsilon$ as well.

Let $K \in \compacts{\sigma(\M)}$, with $K_0\subseteq K$. Let $\psi_K : a \in \A \mapsto \varphi(\indicator{K})^{-1}\varphi(\indicator{K} a \indicator{K})$ (note that $\varphi(\indicator{K}) > 0$). Then by construction, $\psi_K(\indicator{K})=1$ and $\psi_K$ is a positive functional on $\unital{A}$, with $\|\psi_K\|_{\A^{\ast}} = \psi_K(\unit_{\unital{\A}}) = \psi_K(\indicator{K}) = 1$. Hence $\psi_K \in \StateSpace(\A|K)$. 

On the other hand, let $a\in \A$ be given. Then:
\begin{equation}\label{local-state-space-dense-prop-eq0}
\begin{split}
|\varphi(a)-\psi_K(a)| &\leq |\varphi(a)-\varphi(\indicator{K})^{-1}\varphi(a)| + \varphi(\indicator{K})^{-1}|\varphi(a)-\varphi(\indicator{K}a\indicator{K})| \\
&\leq |\varphi(a)|(\varphi(\indicator{K})^{-1}-1) + \varphi(\indicator{K})^{-1}|\varphi(a-\indicator{K} a \indicator{K})| \\
&\leq  \frac{\varepsilon}{1-\varepsilon} |\varphi(a)| + \frac{1}{1-\varepsilon}\sqrt{\varphi(1-\indicator{K})}\|a\|_\A \,\text{by Cauchy-Schwarz,}\\
&\leq  \frac{2\varepsilon}{1-\varepsilon}\|a\|_\A
\leq  \epsilon \|a\|_\A \text{.}
\end{split}
\end{equation}
Hence $(\psi_K)_{K\in \compacts{\sigma(\M)}}$ converges to $\varphi$ in norm.
\end{proof}

We conclude this section with a notation and the properties of the restriction map for linear functionals from $\A$ to $\M$ for a topographic quantum space $(\A,\M)$, as we shall use these facts later in this paper.

\begin{notation}
Let $\A$ be a C*-algebra and $\M$ be a C*-subalgebra of $\A$. For any continuous linear functional $\mu$ of $\A$, we denote by $[\mu]_\M$ the restriction of $\mu$ to $\M$. Thus $[\mu]_\M \in \M^\ast$ for all $\mu \in \A^\ast$.
\end{notation}

\begin{proposition}\label{restriction-prop}
Let $(\A,\M)$ be a topographic quantum space. Then:
\begin{enumerate}
\item If $\varphi \in \StateSpace(\A)$ then $[\varphi]_\M \in \StateSpace(\M)$,
\item If $\psi \in \StateSpace(\M)$ then there exists $\varphi \in \StateSpace(\A)$ such that $[\varphi]_\M = \psi$.
\item If $(\varphi_\alpha)_{\alpha \in I}$ is a net in $\StateSpace(\A)$ weak* converging in $\A^\ast$ to $\mu$ then $([\varphi_\alpha])_{\alpha\in I}$ weak* converges in $\M^\ast$ to $[\mu]_\M$. 
\end{enumerate} 
In other words, the map $\varphi \in \StateSpace(\A) \mapsto [\varphi]_\M$ is a well-defined weak*-continuous affine surjection onto $\StateSpace(\M)$.
\end{proposition}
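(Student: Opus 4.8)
The plan is to verify the three assertions in order, since each is a short C*-algebraic computation, and then assemble them into the concluding sentence.

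For assertion (1), I would observe that if $\varphi\in\StateSpace(\A)$, then $[\varphi]_\M$ is a positive linear functional on $\M$ (restriction preserves positivity), so it remains only to show $\|[\varphi]_\M\|_{\M^\ast}=1$. Using that $(\A,\M)$ is a topographic quantum space, $\M$ contains an approximate identity $(e_\lambda)$ of $\A$; since $\varphi$ is a state, $\varphi(e_\lambda)\to 1$ (this is the standard fact that states are approximately attained on an approximate unit — or one can cite Lemma~\ref{topographic-approx-identity-lem} to get a concrete approximate unit $(f_\alpha)_{\alpha\in I}$ in $\sa{\M}$ with $0\le f_\alpha\le\unit_{\unital{\A}}$ and $\varphi(f_\alpha)\to 1$). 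Since each $f_\alpha\in\M$ with $\|f_\alpha\|_\A\le 1$, this forces $\|[\varphi]_\M\|_{\M^\ast}\ge 1$, hence $=1$, so $[\varphi]_\M\in\StateSpace(\M)$.

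For assertion (2), given $\psi\in\StateSpace(\M)$, the Hahn-Banach theorem extends $\psi$ to a linear functional $\varphi$ on $\A$ with $\|\varphi\|_{\A^\ast}=\|\psi\|_{\M^\ast}=1$; since $\varphi$ has norm $1$ and, using the approximate unit $(f_\alpha)_{\alpha\in I}\subseteq\sa{\M}$ of Lemma~\ref{topographic-approx-identity-lem}, $\varphi(f_\alpha)=\psi(f_\alpha)\to 1$ (as $(f_\alpha)$ is also an approximate unit of $\M$ and $\psi$ is a state of $\M$), the standard characterization of states (a norm-one functional that is ``asymptotically unital'' on an approximate identity is a state) gives $\varphi\in\StateSpace(\A)$, with $[\varphi]_\M=\psi$ by construction. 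Assertion (3) is immediate: for each $a\in\M\subseteq\A$, weak* convergence of $(\varphi_\alpha)$ to $\mu$ in $\A^\ast$ gives $\varphi_\alpha(a)\to\mu(a)$, i.e.\ $[\varphi_\alpha]_\M(a)\to[\mu]_\M(a)$; since this holds for all $a\in\M$, $([\varphi_\alpha]_\M)_{\alpha\in I}$ weak* converges to $[\mu]_\M$ in $\M^\ast$.

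Finally, the concluding sentence follows by combining these: assertion (1) says the map $\varphi\mapsto[\varphi]_\M$ sends $\StateSpace(\A)$ into $\StateSpace(\M)$, assertion (2) gives surjectivity, affineness is clear since restriction is linear, and assertion (3) (applied to convergent nets, equivalently noting that the map is continuous for the respective weak* topologies by the very definition of those topologies, since each evaluation $\varphi\mapsto[\varphi]_\M(a)=\varphi(a)$ for $a\in\M$ is weak* continuous on $\StateSpace(\A)$) gives weak*-continuity. The main obstacle is minor: one must be careful that the relevant approximate unit lies in $\M$ (not merely in $\A$) so that it can be used to test the norm of $[\varphi]_\M$ and the ``stateness'' of the Hahn-Banach extension — but this is exactly guaranteed by the definition of a topographic quantum space, and Lemma~\ref{topographic-approx-identity-lem} supplies a convenient such approximate unit sitting in $\sa{\M}$.
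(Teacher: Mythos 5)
Your proof is correct and follows essentially the same route as the paper: all three assertions hinge on the approximate unit of $\A$ sitting inside $\M$, exactly as in the paper's argument. The only (harmless) variation is in assertion (2), where you use the plain norm-preserving Hahn-Banach extension and then deduce positivity from the standard characterization $\|\varphi\|=\lim_\alpha\varphi(f_\alpha)$, whereas the paper invokes the Hahn-Banach extension theorem for positive functionals and then checks the norm; both are standard and equivalent in effect.
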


\begin{proof}
Let $(e_\beta)_{\beta \in J}$ be an approximate unit for $\A$ in $\M$, which exists by Definition (\ref{topographic-quantum-space-def}). Let $\varphi \in \StateSpace(\A)$. Note that by definition, $[\varphi]_\M$ is a positive linear functional and that $\|[\varphi]_\M\|_{\M^\ast} \leq \|\varphi\|_{\A^\ast} = 1$. Then $1 \geq [\varphi]_\M (e_\beta) = \varphi(e_\beta) \stackrel{\beta \in J}{\longrightarrow} 1$, and thus $[\varphi]_\M = 1$ so $[\varphi]_\M \in\StateSpace(\M)$.

Conversely, if $\psi \in \StateSpace(\M)$,then by the Hahn-Banach extension theorem for positive linear functional, there exists $\varphi \in \A^\ast$ positive linear functional on $\A$ such that $[\varphi]_\M = \psi$. It is easy to see that $\varphi$ is indeed a state of $\A$.


The weak* continuity of this surjection is straightforward.
\end{proof}

\subsection{Lipschitz triples}

We now bring together the two substructures defined in this paper so far into an object whose signature will be the same as {\qms s}. In essence, a {\qms} will be a Lipschitz triple, as defined below, with an additional topological condition based on the notion of tame sets, defined in this section as well.

\begin{definition}
A \emph{Lipschitz triple} $(\A,\Lip,\M)$ is a Lipschitz pair $(\A,\Lip)$ and an Abelian C*-subalgebra $\M$ of $\A$ such that $(\A,\M)$ is a quantum topographic space.
\end{definition}

A Lipschitz triple allows us to define our noncommutative analogue of a Do\-bru\-shin-tight set (see Equation (\ref{Dobrushin-tight-intro} in the introduction), which we call a tame set of states.

\begin{definition}\label{tame-def}
Let $(\A,\Lip,\M)$ be a Lipschitz triple. A subset $\mathscr{K}$ of $\StateSpace(\A)$ is called \emph{$(\A,\Lip,\M)$-tame} when, for some $\mu \in \StateSpace(\A|\M)$:
\begin{equation*}
\lim_{K \in \compacts{\sigma(\M)}} \sup \{ |\varphi(a-\indicator{K}a\indicator{K})| : {a \in \mulip(\A,\Lip,\mu)}, \varphi \in \mathscr{K} \}= 0 \text{.}
\end{equation*}
\end{definition}

As a first observation, we note that the union of all tame sets of $\StateSpace(\A)$ for a Lipschitz triple $(\A,\Lip,\M)$ is norm dense in $\StateSpace(\A)$ since it contains the local state space:

\begin{proposition}\label{obvious-tame-prop}
Let $(\A,\Lip,\M)$ be a Lipschitz triple. For any $K\in \compacts{\sigma(\M)}$, the set $\StateSpace(\A|K)$ is tame.
\end{proposition}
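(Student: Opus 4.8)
The plan is to show that $\StateSpace(\A|K)$ is $(\A,\Lip,\M)$-tame by choosing a convenient witness state $\mu$ and exploiting the fact that every $\varphi \in \StateSpace(\A|K)$ is supported on the single fixed compact set $K$. Concretely, pick any $\mu \in \StateSpace(\A|K) \subseteq \StateSpace(\A|\M)$; such a $\mu$ exists whenever $K \ne \emptyset$ (and the empty set case is trivial since then $\StateSpace(\A|K)$ is empty and the defining limit is vacuously $0$), and $\StateSpace(\A|K)$ is nonempty when $\indicator{K} \ne 0$, which we may assume. Then I must estimate
\begin{equation*}
\sup \{ |\varphi(a - \indicator{L}a\indicator{L})| : a \in \mulip(\A,\Lip,\mu),\ \varphi \in \StateSpace(\A|K) \}
\end{equation*}
for $L \in \compacts{\sigma(\M)}$ with $L \supseteq K$, and show it tends to $0$ along the directed set $\compacts{\sigma(\M)}$. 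In fact, for $L \supseteq K$ I claim the supremum is exactly $0$: since $\varphi(\indicator{K}) = 1$ and $K \subseteq L$, we have $\varphi(\indicator{L}) = 1$ as well, and $\varphi$ is then concentrated on the face associated with $\indicator{L}$.

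The key computational step is the observation that if $\varphi \in \StateSpace(\A)$ satisfies $\varphi(\indicator{L}) = 1$ for a projection $\indicator{L} \in \M^{\ast\ast} \subseteq \A^{\ast\ast}$, then $\varphi(b) = \varphi(\indicator{L}b\indicator{L})$ for every $b \in \sa{\unital{\A}}$ (indeed for every $b \in \unital{\A}$). This is the standard fact that a state supported on a projection $p$ satisfies $\varphi(b) = \varphi(pbp)$: one applies the Cauchy-Schwarz inequality to the positive functional $\varphi$ twice, using $\varphi(\unit_{\unital{\A}} - \indicator{L}) = 1 - \varphi(\indicator{L}) = 0$ to kill the cross terms, exactly as in the Cauchy-Schwarz estimates already used in the proofs of Theorem \ref{tight-eq-precompact-thm} and Proposition \ref{local-state-space-dense-prop}. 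Applying this with $b = a$ for any $a \in \mulip(\A,\Lip,\mu) \subseteq \sa{\unital{\A}}$ gives $\varphi(a - \indicator{L}a\indicator{L}) = 0$. Hence for every $L \supseteq K$ the supremum over $a \in \mulip(\A,\Lip,\mu)$ and $\varphi \in \StateSpace(\A|K)$ is zero, and so certainly
\begin{equation*}
\lim_{L \in \compacts{\sigma(\M)}} \sup \{ |\varphi(a - \indicator{L}a\indicator{L})| : a \in \mulip(\A,\Lip,\mu),\ \varphi \in \StateSpace(\A|K) \} = 0,
\end{equation*}
since the net is eventually $0$ (it is $0$ for all $L$ past $K$ in the directed set $\compacts{\sigma(\M)}$).

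There is no real obstacle here; the only points requiring a modicum of care are: (i) confirming that the state supported on $K$ is genuinely a normal state of $\A^{\ast\ast}$ so that $\varphi(\indicator{L}a\indicator{L})$ makes sense and the identifications of the preamble (the paragraph after the indicator-function notation, citing \cite[3.7.8]{Pedersen79}) apply — this is already built into the running conventions of the paper; (ii) noting $K \subseteq L$ forces $\indicator{K} \leq \indicator{L}$ as projections in $\M^{\ast\ast}$, hence $\varphi(\indicator{L}) \geq \varphi(\indicator{K}) = 1$, so $\varphi(\indicator{L}) = 1$; and (iii) handling the degenerate case $\StateSpace(\A|K) = \emptyset$ separately, where the statement holds vacuously. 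Thus the proof is essentially a one-line application of the support-projection identity, and I would present it as such, perhaps recalling the Cauchy-Schwarz argument for completeness.
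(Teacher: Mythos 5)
Your proof is correct and is essentially the paper's own argument: the paper likewise notes that $K\subseteq C$ forces $\varphi(\unit_{\unital{\A}}-\indicator{C})\leq\varphi(\unit_{\unital{\A}}-\indicator{K})=0$ and then kills the three cross terms of $a-\indicator{C}a\indicator{C}$ by Cauchy--Schwarz, so the supremum is exactly $0$ for every compact $C\supseteq K$. Your packaging of this as the support-projection identity $\varphi(b)=\varphi(\indicator{L}b\indicator{L})$, together with the remarks on the choice of witness $\mu$ and the empty case, is just a slightly more explicit presentation of the same computation.
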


\begin{proof}
Let $K\in\compacts{\sigma(\M)}$, and let $C\in\compacts{\sigma(\M)}$ with $K\subseteq C$. By Cauchy-Schwarz inequality, for all $\varphi \in \StateSpace(\A|K)$ and for all $a\in \unital{\A}$, we have:
\begin{equation*}
\begin{split}
|\varphi(a)-\varphi(\indicator{C}a\indicator{C})| &\leq |\varphi(\indicator{C}a(\unit_{\unital{\A}}-\indicator{C}))| + |\varphi((\unit_{\unital{\A}}-\indicator{C})a\indicator{C})|\\
&\quad + |\varphi((\unit_{\unital{\A}} - \indicator{C})a(\unit_{\unital{\A}}-\indicator{C}))|\\
&\leq 3\sqrt{\varphi(\unit_{\unital{\A}}-\indicator{C})}\|a\|_{\unital{\A}}
\leq 3\sqrt{\varphi(\unit_{\unital{\A}} - \indicator{K})}\|a\|_{\unital{\A}} = 0 \text{.}
\end{split}
\end{equation*}
Hence our proposition follows by definition.
\end{proof}

We now prove a very important result: tame sets are always tight. This is very useful for our purpose, and also shows that our notion is somewhat different from Dobrushin's notion in a topological sense. Indeed, in a finite diameter metric space, any set of probability measures is Dobrushin-tight, including the whole state space, while tame sets must have, among other properties, weak* closures contained in $\StateSpace(\A)$ --- which exclude the state space for any non-unital C*-algebra. On the other hand, tightness will enable us to derive key properties of tame sets in this section.

\begin{theorem}\label{tame-implies-tight-thm}
Let $(\A,\Lip,\M)$ be a Lipschitz triple. Then a tame subset of $\StateSpace(\A)$ is tight. In particular, the weak* closure of a tame subset is a weak* compact subset of $\StateSpace(\A)$.
\end{theorem}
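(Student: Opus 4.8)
The plan is to reduce tightness of a tame set $\mathscr{K}$ to the single scalar estimate that tameness provides. Fix the state $\mu \in \StateSpace(\A|\M)$ witnessing that $\mathscr{K}$ is $(\A,\Lip,\M)$-tame, and fix a compact set $K_0 \in \compacts{\sigma(\M)}$ with $\mu(\indicator{K_0}) = 1$. The key observation is that $\unit_{\unital{\A}} - \indicator{K_0} \in \M^{\ast\ast}$ can be used to manufacture, for a suitably large compact $K$, an element of $\mulip(\A,\Lip,\mu)$ which dominates the defect $\unit_{\unital{\A}} - \indicator{K}$ in a way visible to every $\varphi \in \mathscr{K}$. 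The difficulty, and the place where one must be careful, is that elements of $\mulip(\A,\Lip,\mu)$ live in $\sa{\unital{\A}}$ and need not be the indicators themselves; so the argument has to pass through an actual function in the domain of $\Lip$.

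First I would pick, using density of the domain of $\Lip$ in $\sa{\unital{\A}}$ together with Lemma \ref{topographic-approx-identity-lem}, a net $(f_\alpha, K_\alpha)_{\alpha \in I}$ as in that lemma, and for each $\alpha$ form $g_\alpha = f_\alpha - \mu(f_\alpha)\unit_{\unital{\A}}$, so $\mu(g_\alpha) = 0$; note $f_\alpha$ is compactly supported in $\M$, and since $K_\alpha \supseteq K_0$ for $\alpha$ large we get $\mu(f_\alpha) = 1$, hence $g_\alpha = f_\alpha - \unit_{\unital{\A}}$ and $\unit_{\unital{\A}} - \indicator{K_\alpha} \leq \unit_{\unital{\A}} - f_\alpha = -g_\alpha \leq \unit_{\unital{\A}}$. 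However $g_\alpha$ need not satisfy $\Lip(g_\alpha) \leq 1$; the honest route is instead to approximate each $f_\alpha$ in norm by some $b$ with $\Lip(b) < \infty$ and rescale, but that destroys the clean inequality. So the cleaner plan is: apply the tameness limit directly to the net of compacts. For $\varepsilon > 0$, tameness gives a compact $C_\varepsilon$ such that for all $K \supseteq C_\varepsilon$ and all $a \in \mulip(\A,\Lip,\mu)$, $\sup_{\varphi \in \mathscr{K}}|\varphi(a - \indicator{K}a\indicator{K})| < \varepsilon$. I would then choose a single well-chosen $a = a_0 \in \mulip(\A,\Lip,\mu)$, built from a compactly supported function $f \in \sa{\M}$ with $\Lip(f) \leq 1$ that equals $1$ on a prescribed compact neighborhood $L$ of $K_0$ (such $f$ exists: take any Urysohn-type function in the domain of $\Lip$, or approximate and rescale — this is where the density hypothesis on the domain of $\Lip$ is used, and this is the main obstacle, since unlike in the commutative Lipschitz case we are not handed a Lipschitz bump function for free; it must be extracted from density plus the fact that $\M$ is abelian so indicators of compacts exist in $\M^{\ast\ast}$). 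Set $a_0 = f - \mu(f)\unit_{\unital{\A}}$; enlarging $L$ past $K_0$ makes $\mu(f) = 1$, so $a_0 = f - \unit_{\unital{\A}}$.

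Then for any $K \in \compacts{\sigma(\M)}$ with $C_\varepsilon \cup L \subseteq K$, I would estimate, for $\varphi \in \mathscr{K}$,
\begin{equation*}
\varphi(\unit_{\unital{\A}} - \indicator{K}) \leq \varphi(\unit_{\unital{\A}} - f) = -\varphi(a_0) = \varphi(a_0 - \indicator{K}a_0\indicator{K}) - \varphi(a_0)\cdot 0 + \big(\varphi(\indicator{K}a_0\indicator{K}) - \varphi(a_0)\big) - \varphi(\indicator{K}a_0\indicator{K}),
\end{equation*}
which is not yet clean; so instead I would argue more directly: since $f$ equals $1$ on $L \subseteq K$ we have $\indicator{K}f\indicator{K}$ agreeing with $\indicator{K}$ on $L$, and $\unit_{\unital{\A}} - f \leq \unit_{\unital{\A}} - \indicator{L}$, giving $\varphi(\unit_{\unital{\A}} - \indicator{K}) \leq \varphi(\unit_{\unital{\A}} - \indicator{L}) \leq \varphi(\unit_{\unital{\A}} - f) = -\varphi(a_0)$, and then $|\varphi(a_0)| = |\varphi(a_0) - \varphi(\indicator{K}a_0\indicator{K})| + |\varphi(\indicator{K}a_0\indicator{K})|$ with the first term $< \varepsilon$ by tameness and the second controlled since $\indicator{K}a_0\indicator{K} = \indicator{K}f\indicator{K} - \indicator{K}$ is a difference whose $\varphi$-value is near $0$ once $K$ is large (using $\varphi(\indicator{K}) \to$ something; this closes circularly unless handled via Cauchy–Schwarz as in Proposition \ref{obvious-tame-prop}). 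The correct endgame, which I expect to be the one the author uses, is: bound $\sup_{\varphi}\varphi(\unit_{\unital{\A}} - \indicator{K})$ above by $\sup_\varphi |\varphi(a_0 - \indicator{K}a_0\indicator{K})| + \sup_\varphi |\varphi(\indicator{K}(a_0+\unit_{\unital{\A}})\indicator{K} - \unit_{\unital{\A}}\indicator{K})|$ and observe the second supremum involves $\indicator{K}f\indicator{K} - \indicator{K}$ which vanishes identically for $K \supseteq L$ since $f\indicator{K} = \indicator{K}$ there — wait, that requires $f = 1$ on all of $K$, not just $L$. The clean fix is to not enlarge $K$ arbitrarily but to only use $K$ with $L \subseteq K$ and accept a defect term $\varphi(\indicator{K \setminus L})$; instead, I would use monotonicity: $\unit_{\unital{\A}} - \indicator{K} \leq \unit_{\unital{\A}} - f$ for $K \supseteq L$ regardless, so $\sup_\varphi \varphi(\unit_{\unital{\A}} - \indicator{K}) \leq \sup_\varphi \varphi(\unit_{\unital{\A}} - f) = \sup_\varphi(-\varphi(a_0)) = \sup_\varphi |\varphi(a_0 - \indicator{K'}a_0\indicator{K'})|$ for any fixed large $K' \supseteq C_\varepsilon$, using $\indicator{K'}a_0\indicator{K'} = \indicator{K'}f\indicator{K'} - \indicator{K'}$ and the analogue of Proposition \ref{obvious-tame-prop}'s computation to bound $|\varphi(\indicator{K'}f\indicator{K'}) - \varphi(\indicator{K'})|$ by $2\sqrt{\varphi(\unit_{\unital{\A}} - \indicator{K'})}\|f\|$. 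Choosing $K'$ via Theorem \ref{tight-eq-precompact-thm} applied to the (already weak*-precompact? no) — this last circularity is exactly the crux, and I expect the author resolves it by a bootstrapping two-stage choice of compacts or by invoking the tameness estimate on $a_0$ together with a separate direct estimate, so that in the end $\sup_{\varphi \in \mathscr{K}}\varphi(\unit_{\unital{\A}} - \indicator{K}) \to 0$ as $K \nearrow \sigma(\M)$, which is precisely tightness (Definition \ref{tight-def}); the final sentence, that the weak* closure is weak* compact, is then immediate from Theorem \ref{tight-eq-precompact-thm}.
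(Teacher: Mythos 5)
Your proposal does not close, and you say so yourself in the last sentence; the ``circularity'' you keep running into is not resolved by a two-stage choice of compacts but simply disappears once you replace your one-sided monotonicity estimates by an \emph{exact} algebraic identity. Take $f \in \sa{\M}$ compactly supported with $0 \leq f \leq \unit_{\unital{\A}}$ (Urysohn in $\sigma(\M)$; no Lipschitz condition on $f$), and put $g = \unit_{\unital{\A}} - f$. For every $K \in \compacts{\sigma(\M)}$ containing the open set where $f \neq 0$, one has $\indicator{K}f = f$, and since $\M^{\ast\ast}$ is Abelian, $\indicator{K}g\indicator{K} = \indicator{K}g$; hence
\begin{equation*}
\unit_{\unital{\A}} - \indicator{K} \;=\; (\unit_{\unital{\A}} - \indicator{K})\,g \;=\; g - \indicator{K}g\indicator{K}
\end{equation*}
exactly, in $\M^{\ast\ast}$. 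So $\varphi(\unit_{\unital{\A}} - \indicator{K})$ \emph{is} the tameness quantity evaluated at $g$: there is no defect term $\varphi(\indicator{K\setminus L})$, no Cauchy--Schwarz, no auxiliary compact $K'$, and no circular appeal to tightness. This identity is the bridge your last display is groping for and never writes down.

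The second missing piece is the route you explicitly rejected as ``destroying the clean inequality'': you do not need a Lipschitz bump function, only a norm approximation, and the rescaling it forces is harmless because of the order of quantifiers. Fix $b \in \sa{\unital{\A}}$ with $\Lip(b) < \infty$ and $\|g - b\|_{\unital{\A}} \leq \tfrac{1}{3}\varepsilon$, and set $\lambda = \max\{\Lip(b),1\}$; then for all $K$ as above and all $\varphi \in \mathscr{K}$,
\begin{equation*}
\varphi(\unit_{\unital{\A}} - \indicator{K}) = |\varphi(g - \indicator{K}g\indicator{K})| \leq \lambda\left|\varphi\left(\lambda^{-1}b - \indicator{K}\lambda^{-1}b\indicator{K}\right)\right| + 2\|g-b\|_{\unital{\A}}\text{.}
\end{equation*}
Since $f$, $b$, and hence $\lambda$ are all fixed \emph{before} tameness is invoked, and the tameness limit is $0$, you may demand that the tame supremum over $\mulip(\A,\Lip,\mu)$ be at most $\varepsilon/(3\lambda)$ for $K$ large enough; the factor $\lambda$ then cancels and you get $\sup_{\varphi\in\mathscr{K}}\varphi(\unit_{\unital{\A}} - \indicator{K}) \leq \varepsilon$, which is tightness, and the compactness assertion follows from Theorem \ref{tight-eq-precompact-thm} as you note. (Your choice of $f \equiv 1$ on a compact carrying $\mu$ is the right one to make $\mu(g)=0$, so that recentering $\lambda^{-1}b$ to lie in $\mulip(\A,\Lip,\mu)$ costs only an absorbable $O(\varepsilon)\cdot\varphi(\unit_{\unital{\A}}-\indicator{K})$.) With these two repairs your outline becomes the paper's argument; as written, it has a genuine gap at exactly these two points.
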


\begin{proof}
Let $\mathscr{K}$ be a tame subset of $\StateSpace(\A)$. Let $\varepsilon > 0$. Let $U\subseteq \sigma(\M)$ be a nonempty open set with compact closure and $x\in U$. Since $\{x\}$ is compact, there exists by Urysohn's Lemma for locally compact Hausdorff spaces a continuous function $f\in\sa{\M}$ with $0\leq f\leq \unit_{\unital{\A}}$, $f(x)=1$ and $f\indicator{\sigma(\M)\setminus U} = 0$. Let $g = \unit_{\unital{\A}} - f \in \sa{\unital{M}}$ and note that $(\unit_{\unital{\A}}-\indicator{K})g = \unit_{\unital{\A}}-\indicator{K}$ for all $K\in\compacts{\sigma(\M)}$ with $U\subseteq K$.

Now, since $\{a \in \sa{\unital{\A}} : \Lip(a)<\infty \}$ is norm dense in $\sa{\unital{\A}}$, there exists $b \in \sa{\unital{\A}}$ with $\Lip(b) < \infty$ and $\| g - b\|_{\unital{\A}} < \frac{1}{3}\varepsilon$. 

Let $\varphi \in \StateSpace(\A)$ and set $\lambda = \max\{\Lip(b),1\} \in [1,\infty)\subseteq \R$. Let $K\in\compacts{\sigma(\M)}$ with $U\subseteq K$. Then, using the observation that $g$ commutes with the projection $\indicator{K}$:
\begin{equation}\label{tame-implies-tight-thm-eq0}
\begin{split}
|\varphi(\unit_A - \indicator{K})| &= |\varphi((\unit_{\unital{\A}}-\indicator{K})g)| \\
&= |\varphi(g - \indicator{K}g)| = |\varphi(g-\indicator{K}g\indicator{K})| \\
&\leq |\varphi(b-\indicator{K}b\indicator{K})| + |\varphi(b-g)| + |\varphi(\indicator{K}(g-b)\indicator{K})| \\
&\leq \lambda |\varphi(\lambda^{-1}b-\indicator{K}\lambda^{-1}b\indicator{K})| + \frac{2}{3}\varepsilon \text{.}
\end{split}
\end{equation}

Since $\mathscr{K}$ is tame, there exists $K_0 \in \compacts{\sigma(\M)}$ and $\mu \in \StateSpace(\A|\M)$ so that for all $K\in\compacts{\sigma(\M)}$ with $K_0\subseteq K$ we have:
\begin{equation}\label{tame-implies-tight-thm-eq1}
\sup \{ |\varphi(a-\indicator{K}a\indicator{K})| : \varphi \in\mathscr{K}\text{ and } a \in \mulip(\A,\Lip,\mu) \} \leq \frac{1}{3\lambda} \varepsilon \text{.}
\end{equation}

Thus, since $\Lip(\lambda^{-1} b) \leq 1$, we conclude from both Inequalities (\ref{tame-implies-tight-thm-eq0}) and (\ref{tame-implies-tight-thm-eq1}) that for all $K \in \compacts{\sigma(\M)}$ with $\overline{U} \cup K_0 \subseteq K$ we have:
\begin{equation}
\begin{split}
\sup \{ \varphi(\unit_{\unital{\A}} - \indicator{K}) : \varphi \in \mathscr{K} \} \leq \varepsilon\text{,}
\end{split}
\end{equation}

as desired.\end{proof}

In the classical situation, the set of all probability measures supported on a given compact in a metric space is always of finite diameter for the {\mongekant}. This is not true in general in the noncommutative setting (see the Examples section). However, the purpose of introducing the local state space is that we may require it to be well-behaved in the following sense:

\begin{definition}
A Lipschitz triple $(\A,\Lip,\M)$ is \emph{regular} when for all $K \in \compacts{\sigma(\M)}$ the set $\StateSpace(\A|K)$ has finite diameter for the {\mongekant} metric $\Kantorovich{\Lip}$ associated with $(\A,\Lip)$. 
\end{definition}

As a first step, we characterize regularity for Lipschitz triples in C*-algebraic terms, and we start doing so by establishing a couple of useful lemmas which we will need again later.

\begin{lemma}\label{pnorm-eq-norm-lem}
Let $\A$ be a C*-algebra and $p \in \A^{\ast\ast}$ be a projection. Then:
\begin{equation*}
\forall a \in \sa{\unital{\A}} \;\; \|pap\|_{\A^{\ast\ast}} = \sup \{ |\varphi(a)| : \varphi\in\StateSpace(\A)\text{ and } \varphi(p) = 1 \} \text{.}
\end{equation*}
\end{lemma}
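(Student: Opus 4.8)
The plan is to prove the two inequalities separately, using the identification of states of $\A$ with normal states of $\A^{\ast\ast}$ already fixed in the excerpt. For the inequality $\sup\{|\varphi(a)| : \varphi\in\StateSpace(\A),\ \varphi(p)=1\}\leq \|pap\|_{\A^{\ast\ast}}$, I would fix a state $\varphi$ with $\varphi(p)=1$. Since $\varphi$ extends to a normal state of $\A^{\ast\ast}$ and $p$ is a projection with $\varphi(p)=1$, the Cauchy--Schwarz inequality gives $\varphi(b) = \varphi(pb) = \varphi(bp) = \varphi(pbp)$ for every $b\in\A^{\ast\ast}$ (the standard ``support projection'' argument: $|\varphi((1-p)b)|^2\leq \varphi(1-p)\varphi(b^\ast b)=0$, and similarly on the other side). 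Applying this with $b=a$ yields $|\varphi(a)| = |\varphi(pap)| \leq \|pap\|_{\A^{\ast\ast}}$ since $\varphi$ has norm one on $\A^{\ast\ast}$. Taking the supremum over all such $\varphi$ gives one direction.

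For the reverse inequality $\|pap\|_{\A^{\ast\ast}}\leq \sup\{|\varphi(a)| : \varphi\in\StateSpace(\A),\ \varphi(p)=1\}$, the idea is that $pap$ is a self-adjoint element of the von Neumann algebra $\A^{\ast\ast}$, so its norm is attained (approximately) by a normal state of $\A^{\ast\ast}$; one then needs to arrange for that state to have the form coming from a state of $\A$ with $\varphi(p)=1$. Concretely, $\|pap\|_{\A^{\ast\ast}} = \sup\{|\omega(pap)| : \omega \text{ a normal state of } \A^{\ast\ast}\}$, and by normality the supremum may be taken over normal states of the reduced von Neumann algebra $p\A^{\ast\ast}p$. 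Given $\varepsilon>0$, pick a normal state $\omega$ of $p\A^{\ast\ast}p$ with $|\omega(pap)|>\|pap\|_{\A^{\ast\ast}}-\varepsilon$, and extend it to a normal state $\widetilde\omega$ of $\A^{\ast\ast}$ by $\widetilde\omega(b)=\omega(pbp)$; then $\widetilde\omega(p)=\omega(p)=1$ and $\widetilde\omega(a)=\omega(pap)$. Since $\A$ is weak*-dense in $\A^{\ast\ast}$ and $\widetilde\omega$ is weak*-continuous on bounded sets when restricted to the unit ball (normal states are weak*-continuous on $\A^{\ast\ast}$; more precisely, the restriction of $\widetilde\omega$ to $\A$ is a state $\varphi$ of $\A$, and its unique normal extension to $\A^{\ast\ast}$ is $\widetilde\omega$ itself by uniqueness of the normal extension), we get $\varphi := \widetilde\omega|_{\A}\in\StateSpace(\A)$ with (via the identification) $\varphi(p) = \widetilde\omega(p) = 1$ and $|\varphi(a)| = |\widetilde\omega(a)| = |\omega(pap)| > \|pap\|_{\A^{\ast\ast}}-\varepsilon$. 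Letting $\varepsilon\to 0$ finishes the proof.

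The main obstacle I anticipate is the bookkeeping around the identification of states of $\A$ with normal states of $\A^{\ast\ast}$ and, conversely, checking that every normal state of $\A^{\ast\ast}$ of the form $\widetilde\omega(\cdot)=\omega(p\,\cdot\,p)$ arises from (i.e. restricts to) an honest state of $\A$ whose canonical normal extension is again $\widetilde\omega$. The cleanest way around this is to invoke the standard fact (Pedersen, 3.7.8, already cited in the excerpt) that $\varphi\mapsto$ (its normal extension) is a bijection from $\StateSpace(\A)$ onto the normal states of $\A^{\ast\ast}$, so that it suffices to exhibit $\widetilde\omega$ as \emph{some} normal state of $\A^{\ast\ast}$ with $\widetilde\omega(p)=1$ and $|\widetilde\omega(a)|$ close to $\|pap\|_{\A^{\ast\ast}}$; normality of $\widetilde\omega$ follows from normality of $\omega$ on $p\A^{\ast\ast}p$ together with normality of the map $b\mapsto pbp$. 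Everything else — the Cauchy--Schwarz support-projection argument and the norm-attainment for normal states on a von Neumann algebra — is routine.
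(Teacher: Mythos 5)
Your proposal is correct and follows essentially the same route as the paper: the inequality $\sup\{|\varphi(a)|\}\leq\|pap\|_{\A^{\ast\ast}}$ via the Cauchy--Schwarz support-projection argument, and the reverse inequality by compressing with $p$ and invoking the identification of $\StateSpace(\A)$ with the normal states of $\A^{\ast\ast}$, which norm the self-adjoint element $pap$. The only (cosmetic) difference is that you start from a norming normal state of the corner $p\A^{\ast\ast}p$ and extend it by $b\mapsto\omega(pbp)$, whereas the paper compresses an arbitrary state $\psi$ of $\A$ to $\psi(p)^{-1}\psi(p\,\cdot\,p)$ and then takes the supremum; both hinge on the same facts from Pedersen 3.7.8.
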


\begin{proof}
Denote $\sup \{ |\varphi(a)| : \varphi\in\StateSpace(\A)\text{ and } \varphi(p) = 1 \}$ by $\pnorm{p}(a)$ for all $a\in \unital{\A}$. First, note that if $\varphi(p)=1$ then by Cauchy-Schwarz, for all $a\in \unital{\A}$:
\begin{equation*}
\begin{split}
|\varphi(a)| &\leq |\varphi(pap)| + |\varphi(pa(\unit_{\unital{\A}}-p))| + |\varphi((\unit_{\unital{\A}}-p)ap))|\\
&\quad +|\varphi((\unit_{\unital{\A}}-p)a(\unit_{\unital{\A}}-p))| \\
&\leq |\varphi(pap)| + 3\sqrt{\varphi(\unit_{\unital{\A}}-p)}\|a\|_{\unital{\A}} \\
&=  |\varphi(pap)| \leq \|pap\|_{\A^{\ast\ast}}
\end{split}
\end{equation*}
so $\pnorm{p}(a) \leq \|pap\|_{\A^{\ast\ast}}$ for all $a\in \A$.

Let $\psi \in \StateSpace(\A)$. If $\psi(p)= 0 $ then $a\in\A \mapsto \psi(pap) = 0$ by Cauchy-Schwarz, so $\psi(pap) \leq \pnorm{p}(a)$ for all $a\in \unital{\A}$. If instead $\psi(p) \in (0,1]$ then $\psi' : a \in \unital{\A} \mapsto \psi(p)^{-1}\psi(pap)$ is a state of $\unital{\A}$ with $\psi'(p)=1$, as for all $a\in\unital{\A}$, $a\geq 0 \implies \indicator{K}a\indicator{K} \geq 0$ so $\psi'$ is a positive functional on $\unital{\A}$ of norm $\psi'(\unit_{\unital{\A}})= \psi'(p) = 1$. Thus if $\psi(p)>0$ then $\pnorm{p}(a) \geq \psi'(a) \geq \psi(pap)$ for all $a\in \unital{\A}$. 
Thus we always have, for all $a\in \A$ and any state $\psi$, that $\psi(pap)\leq \pnorm{p}(a)\leq \|pap\|_{\A^{\ast\ast}}$. Hence for $a\in\sa{\unital{\A}}$, since $pap \in \sa{\A^{\ast\ast}}$, we have \cite[3.7.8]{Pedersen79}:
\begin{equation*}
\|pap\|_{\A^{\ast\ast}} = \pnorm{p} (a) \text{.}
\end{equation*}
\end{proof}

The following lemma is from \cite{Kadison91}, though we include the proof for the convenience of the reader.

\begin{lemma}\label{increasing-norms-lem}
Let $\A$ be a C*-algebra, and let $a,s,t,u,v \in \A$. Assume that $0\leq s\leq u$ and $0 \leq t\leq v$ in $\A$. Then $\|\sqrt{s}a\sqrt{t}\|_\A \leq \|\sqrt{u}a\sqrt{v}\|_\A$.
\end{lemma}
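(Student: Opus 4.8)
The plan is to reduce the general inequality $\|\sqrt{s}\,a\,\sqrt{t}\|_\A \le \|\sqrt{u}\,a\,\sqrt{v}\|_\A$ to two one-sided statements and then prove each of those by a standard functional-calculus-plus-limit argument. Concretely, I would first prove the auxiliary claim: if $0 \le s \le u$ in $\A$, then $\|\sqrt{s}\,b\|_\A \le \|\sqrt{u}\,b\|_\A$ for every $b \in \A$ (equivalently, $\|\sqrt{s}\,b\|_\A^2 = \|b^\ast s b\|_\A \le \|b^\ast u b\|_\A = \|\sqrt{u}\,b\|_\A^2$). Granting this, the full lemma follows in two steps: apply it with $b = a\sqrt{t}$ to get $\|\sqrt{s}\,a\sqrt{t}\|_\A \le \|\sqrt{u}\,a\sqrt{t}\|_\A$, and then apply it again (after taking adjoints, since $\|x\|_\A = \|x^\ast\|_\A$) with the roles of the left factor played by $\sqrt{t}$ and $0 \le t \le v$, acting on $b = a^\ast\sqrt{u}$, to get $\|\sqrt{t}\,a^\ast\sqrt{u}\|_\A \le \|\sqrt{v}\,a^\ast\sqrt{u}\|_\A$, i.e. $\|\sqrt{u}\,a\sqrt{t}\|_\A \le \|\sqrt{u}\,a\sqrt{v}\|_\A$. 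Chaining the two inequalities gives the result.

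For the auxiliary claim, the subtlety is that $b^\ast s b \le b^\ast u b$ does \emph{not} immediately give the norm inequality unless one knows these are positive elements whose order is compared correctly — but $0 \le s \le u$ implies $b^\ast s b \le b^\ast u b$ (conjugation by $b$ preserves the order $\le$ on self-adjoint elements), and for positive elements $0 \le x \le y$ one has $\|x\|_\A \le \|y\|_\A$ (the norm of a positive element is its spectral radius, and $x \le y$ forces $\mathrm{sp}(x) \subseteq [0,\|y\|_\A]$). So $\|\sqrt{s}\,b\|_\A^2 = \|(\sqrt{s}\,b)^\ast(\sqrt{s}\,b)\|_\A = \|b^\ast s b\|_\A \le \|b^\ast u b\|_\A = \|\sqrt{u}\,b\|_\A^2$, using that $(\sqrt s)^\ast \sqrt s = s$ since $\sqrt s$ is positive hence self-adjoint. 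Taking square roots finishes the auxiliary claim. Here I am using freely that $\sqrt{s}$, $\sqrt{t}$, $\sqrt{u}$, $\sqrt{v}$ make sense and are positive: this uses the continuous functional calculus on the positive elements $s,t,u,v$, which is available since $\A$ is a C*-algebra (working inside $\unital{\A}$ if needed, but the square roots of positive elements already lie in $\A$).

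The only genuine obstacle is the first reduction step, namely that conjugation preserves order: if $s \le u$ then $b^\ast s b \le b^\ast u b$. This is standard — $u - s \ge 0$ means $u - s = c^\ast c$ for some $c \in \A$, whence $b^\ast(u-s)b = (cb)^\ast(cb) \ge 0$ — so I would simply invoke it. Everything else is bookkeeping with the C*-identity $\|x^\ast x\|_\A = \|x\|_\A^2$ and the monotonicity of the norm on positive elements. I would write the proof as: (1) recall/prove that $0 \le x \le y \Rightarrow \|x\|_\A \le \|y\|_\A$ and that $s \le u \Rightarrow b^\ast s b \le b^\ast u b$; (2) deduce $\|\sqrt{s}\,b\|_\A \le \|\sqrt{u}\,b\|_\A$ for all $b$; (3) apply this twice, once directly and once to adjoints, to obtain the claimed inequality.

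\begin{proof}
We first record two standard facts. If $x,y \in \sa{\A}$ satisfy $0 \le x \le y$, then $\|x\|_\A \le \|y\|_\A$; indeed the norm of a positive element equals its spectral radius, and $0 \le x \le y$ forces $\mathrm{sp}(x) \subseteq [0, \|y\|_\A]$. Secondly, if $x,y \in \sa{\A}$ satisfy $x \le y$ and $b \in \A$, then $b^\ast x b \le b^\ast y b$: writing $y - x = c^\ast c$ for some $c \in \A$, we get $b^\ast(y-x)b = (cb)^\ast(cb) \ge 0$.

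Now let $b \in \A$ and suppose $0 \le s \le w$ in $\A$. By the second fact, $b^\ast s b \le b^\ast w b$, and both of these are positive elements of $\A$. Since $\sqrt{s}$ is positive, hence self-adjoint, we have $(\sqrt{s}\,b)^\ast(\sqrt{s}\,b) = b^\ast s b$, so by the C*-identity and the first fact:
\begin{equation*}
\|\sqrt{s}\,b\|_\A^2 = \|b^\ast s b\|_\A \leq \|b^\ast w b\|_\A = \|\sqrt{w}\,b\|_\A^2 \text{.}
\end{equation*}
Taking square roots, $\|\sqrt{s}\,b\|_\A \leq \|\sqrt{w}\,b\|_\A$ whenever $0 \le s \le w$.

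Apply this with $w = u$ and $b = a\sqrt{t}$ to obtain $\|\sqrt{s}\,a\sqrt{t}\|_\A \leq \|\sqrt{u}\,a\sqrt{t}\|_\A$. Next, since $\|x\|_\A = \|x^\ast\|_\A$ and $\sqrt{t},\sqrt{u}$ are self-adjoint, we have $\|\sqrt{u}\,a\sqrt{t}\|_\A = \|\sqrt{t}\,a^\ast\sqrt{u}\|_\A$; applying the displayed inequality with $s$ replaced by $t$, $w$ replaced by $v$, and $b = a^\ast\sqrt{u}$ (using $0 \le t \le v$) gives $\|\sqrt{t}\,a^\ast\sqrt{u}\|_\A \leq \|\sqrt{v}\,a^\ast\sqrt{u}\|_\A = \|\sqrt{u}\,a\sqrt{v}\|_\A$. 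Combining,
\begin{equation*}
\|\sqrt{s}\,a\sqrt{t}\|_\A \leq \|\sqrt{u}\,a\sqrt{t}\|_\A \leq \|\sqrt{u}\,a\sqrt{v}\|_\A \text{,}
\end{equation*}
as claimed.
\end{proof}
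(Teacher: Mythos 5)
Your proof is correct and follows essentially the same route as the paper's: establish the one-sided inequality $\|\sqrt{s}\,b\|_\A \leq \|\sqrt{u}\,b\|_\A$ from $b^\ast s b \leq b^\ast u b$ and the C*-identity, then chain it with its adjoint/right-multiplication counterpart via $\|\sqrt{s}a\sqrt{t}\|_\A \leq \|\sqrt{u}a\sqrt{t}\|_\A \leq \|\sqrt{u}a\sqrt{v}\|_\A$. You simply spell out the standard facts (order preserved under conjugation, monotonicity of the norm on positive elements) that the paper leaves implicit or cites to Kadison--Ringrose.
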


\begin{proof}
See \cite[Exercise 4.6.1]{Kadison91}. Note that:
\begin{equation*}
0\leq (\sqrt{s}a)^\ast (\sqrt{s}a) = a^\ast s a \leq a^\ast u a = (\sqrt{u}a)^\ast (\sqrt{u}a) 
\end{equation*}
so $\|\sqrt{s}a\|_\A \leq \|\sqrt{u}a\|_\A$. Similarly $\|a\sqrt{t}\|_\A \leq \|a\sqrt{v}\|_\A$. Hence:
\begin{equation*}
\|\sqrt{s}a\sqrt{t}\|_\A \leq \|\sqrt{u}a\sqrt{t}\|_\A \leq \|\sqrt{u}a\sqrt{v}\|_\A \text{.}
\end{equation*} 
\end{proof}

We shall use Lemma (\ref{increasing-norms-lem}) when $u,v,s,t$ are projections, in which case they are equal to their square roots.

\begin{proposition}\label{regular-characterization-prop}
Let $(\A,\Lip,\M)$ be a Lipschitz triple and $\mu \in \StateSpace(\A|\M)$. Then $(\A,\Lip,\M)$ is regular if and only if for all $K\in\compacts{\sigma(\M)}$, there exists $r_K \in \R$ such that:
\begin{equation*}
\sup \{ \|\indicator{K}a\indicator{K}\|_{\A^{\ast\ast}} : a \in \mulip(\A,\Lip,\mu) \} \leq r_K\text{.}
\end{equation*}
\end{proposition}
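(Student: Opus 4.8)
The plan is to exploit the identity from Lemma~\ref{pnorm-eq-norm-lem} which, for the compactly supported projection $p=\indicator{K}$, gives
\[
\|\indicator{K}a\indicator{K}\|_{\A^{\ast\ast}} = \sup\{ |\varphi(a)| : \varphi\in\StateSpace(\A)\text{ and }\varphi(\indicator{K})=1\} = \sup\{|\varphi(a)| : \varphi\in\StateSpace(\A|K)\}
\]
for every $a\in\sa{\unital{\A}}$. Thus the quantity $\sup\{\|\indicator{K}a\indicator{K}\|_{\A^{\ast\ast}} : a\in\mulip(\A,\Lip,\mu)\}$ coincides with $\sup\{ |\varphi(a)| : a\in\mulip(\A,\Lip,\mu),\ \varphi\in\StateSpace(\A|K)\}$. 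On the other hand, by Proposition~\ref{wasskant-alt-expression-prop} applied with the base state $\mu$, for any two states $\varphi,\psi$ we have $\Kantorovich{\Lip}(\varphi,\psi)=\sup\{|\varphi(b)-\psi(b)| : b\in\mulip(\A,\Lip,\mu)\}$. The reason $\mu$ is required to lie in $\StateSpace(\A|\M)$ (rather than being arbitrary) is precisely so that $\mu$ itself belongs to some $\StateSpace(\A|K_0)$, and hence to all $\StateSpace(\A|K)$ with $K_0\subseteq K$; moreover every element of $\mulip(\A,\Lip,\mu)$ vanishes at $\mu$ by definition, so $|\varphi(a)| = |\varphi(a)-\mu(a)| \le \Kantorovich{\Lip}(\varphi,\mu)$.

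The ``only if'' direction: assume regularity, so each $\StateSpace(\A|K)$ has finite $\Kantorovich{\Lip}$-diameter, say $d_K<\infty$. Fix $K\in\compacts{\sigma(\M)}$ and enlarge it if necessary to contain $K_0$ so that $\mu\in\StateSpace(\A|K)$ (this only enlarges the sup we must bound, so it is harmless to replace $K$ by $K\cup K_0$ for the purpose of producing a bound $r_K$ — and for a general $K$ one applies the bound for $K\cup K_0$, whose projection dominates $\indicator{K}$, together with Lemma~\ref{increasing-norms-lem} to get $\|\indicator{K}a\indicator{K}\|_{\A^{\ast\ast}}\le\|\indicator{K\cup K_0}a\indicator{K\cup K_0}\|_{\A^{\ast\ast}}$). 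Then for every $a\in\mulip(\A,\Lip,\mu)$ and every $\varphi\in\StateSpace(\A|K)$, $|\varphi(a)| = |\varphi(a)-\mu(a)|\le\Kantorovich{\Lip}(\varphi,\mu)\le d_K$, and taking the supremum over $\varphi$ and using Lemma~\ref{pnorm-eq-norm-lem} gives $\|\indicator{K}a\indicator{K}\|_{\A^{\ast\ast}}\le d_K$; so $r_K:=d_K$ (or $d_{K\cup K_0}$ in general) works.

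The ``if'' direction: assume the bounds $r_K$ exist. Fix $K$ and take $\varphi,\psi\in\StateSpace(\A|K)$; again we may assume $\mu\in\StateSpace(\A|K)$ after replacing $K$ by $K\cup K_0$, since $\StateSpace(\A|K)\subseteq\StateSpace(\A|K\cup K_0)$ so a diameter bound for the larger set bounds the diameter of the smaller. For any $a\in\mulip(\A,\Lip,\mu)$ we estimate $|\varphi(a)-\psi(a)|\le|\varphi(a)|+|\psi(a)|\le 2\|\indicator{K}a\indicator{K}\|_{\A^{\ast\ast}}\le 2r_K$, using Lemma~\ref{pnorm-eq-norm-lem} for both $\varphi$ and $\psi$ since $\varphi(\indicator{K})=\psi(\indicator{K})=1$. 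Taking the supremum over $a$ and invoking Proposition~\ref{wasskant-alt-expression-prop} gives $\Kantorovich{\Lip}(\varphi,\psi)\le 2r_K<\infty$, so $\StateSpace(\A|K)$ has finite diameter, i.e. $(\A,\Lip,\M)$ is regular. The only genuinely delicate point — and the step I would be most careful about — is the bookkeeping around the base point $\mu$: one must check that the statement does not depend on which $\mu\in\StateSpace(\A|\M)$ is chosen (it does not, since changing $\mu$ translates elements of $\mulip(\A,\Lip,\mu)$ by bounded scalar multiples of $\unit_{\unital{\A}}$, which only shifts the norms $\|\indicator{K}a\indicator{K}\|$ by a $K$-dependent constant, preserving finiteness), and that enlarging $K$ to absorb $K_0$ is consistent with Lemma~\ref{increasing-norms-lem} applied to the projections $\indicator{K}\le\indicator{K\cup K_0}$.
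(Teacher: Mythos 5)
Your proof is correct and follows essentially the same route as the paper's: Lemma (\ref{pnorm-eq-norm-lem}) to identify $\|\indicator{K}a\indicator{K}\|_{\A^{\ast\ast}}$ with the supremum of $|\varphi(a)|$ over $\StateSpace(\A|K)$, Proposition (\ref{wasskant-alt-expression-prop}) to express $\Kantorovich{\Lip}$ via $\mulip(\A,\Lip,\mu)$, and Lemma (\ref{increasing-norms-lem}) to enlarge $K$ so that it carries $\mu$ in the forward direction. The only cosmetic difference is that in the converse you bound $|\varphi(a)-\psi(a)|$ directly by $2r_K$ rather than bounding $\Kantorovich{\Lip}(\varphi,\mu)\leq r_K$ and invoking the triangle inequality, which yields the same estimate.
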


\begin{proof}
Assume that $(\A,\Lip,\M)$ is a regular Lipschitz triple. Let $\mu\in\StateSpace(\A|\M)$ and let $K\in\compacts{\sigma(\M)}$ . Let $K'\in\compacts{\sigma(\M)}$ such that $\mu \in \StateSpace(\A|K')$ and $K\subseteq K'$. Since $(\A,\Lip,\M)$ is regular, the set $\StateSpace(\A|K')$ has finite diameter $D\in \R$ for $\Kantorovich{\Lip}$. By Proposition (\ref{wasskant-alt-expression-prop}), Lemma (\ref{increasing-norms-lem}) and Lemma (\ref{pnorm-eq-norm-lem}), we have for all $a\in \mulip(\A,\Lip,\mu)$ that:
\begin{equation*}
\begin{split}
\|\indicator{K}a\indicator{K}\|_{\A^{\ast\ast}} &\leq \|\indicator{K'}a\indicator{K'}\|_{\A^{\ast\ast}} = \sup \{ |\varphi(a)| : \varphi \in \StateSpace(\A|K') \} \\
&= \Kantorovich{\Lip}(\varphi, \mu) \leq D \text{.}
\end{split}
\end{equation*}

Conversely, assume that for all $K\in \compacts{\sigma(\M)}$, there exists $r_K$ such that:
\begin{equation*}
\sup \{ \|\indicator{K}a\indicator{K}\|_{\A^{\ast\ast}} : a\in \mulip(\A,\Lip,\mu) \} \leq r_K\text{.}
\end{equation*}
Then by Proposition (\ref{wasskant-alt-expression-prop}) and by Lemma (\ref{pnorm-eq-norm-lem}), we have for all $\varphi \in \StateSpace(\A|K)$:
\begin{equation*}
\Kantorovich{\Lip}(\varphi,\mu) \leq r_K
\end{equation*}
and thus, by the triangle inequality for $\Kantorovich{\Lip}$, the set $\StateSpace(\A|K)$ has finite diameter for $\Kantorovich{\Lip}$.
\end{proof}

Regularity has two very important consequences on tame sets. First, the notion of a tame set does not depend on the choice of a particular local state. Second, tame sets are always contained in closed balls of finite radii around any local state. Both of these properties rely on regularity and the fact that tame implies tight.

\begin{theorem}\label{regular-lipschitz-tame-thm}
Let $(\A,\Lip,\M)$ be a regular Lipschitz triple. A subset $\mathscr{K}$ of $\StateSpace(\A)$ is $(\A,\Lip,\M)$-tame if and only if for all $\mu \in \StateSpace(\A|\M)$, we have:
\begin{equation*}
\lim_{K \in \compacts{\sigma(\M)}} \sup \{ |\varphi(a-\indicator{K}a\indicator{K})| : {a \in \mulip(\A,\Lip,\mu)}, \varphi \in \mathscr{K} \}= 0 \text{.}
\end{equation*}
\end{theorem}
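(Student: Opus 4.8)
The plan is as follows. One implication is immediate: the local state space $\StateSpace(\A|\M)$ is nonempty — indeed norm dense in $\StateSpace(\A)$ by Proposition \ref{local-state-space-dense-prop} — so the displayed condition holding ``for all $\mu \in \StateSpace(\A|\M)$'' trivially entails it holding ``for some $\mu$'', which is precisely $(\A,\Lip,\M)$-tameness. Thus the substance of the theorem is the converse: assuming tameness is witnessed by a single local state $\mu_0 \in \StateSpace(\A|\M)$, show that the limit vanishes for an \emph{arbitrary} $\mu \in \StateSpace(\A|\M)$.

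First I would invoke the translation trick already used in Proposition \ref{wasskant-alt-expression-prop}: given $a \in \mulip(\A,\Lip,\mu)$, put $b = a - \mu_0(a)\unit_{\unital{\A}}$ (note $\mu_0(a) \in \R$). Then $\Lip(b) = \Lip(a) \leq 1$ and $\mu_0(b) = 0$, so $b \in \mulip(\A,\Lip,\mu_0)$, and since $\indicator{K}$ is a projection commuting with multiples of $\unit_{\unital{\A}}$,
\begin{equation*}
a - \indicator{K}a\indicator{K} = \bigl(b - \indicator{K}b\indicator{K}\bigr) + \mu_0(a)\bigl(\unit_{\unital{\A}} - \indicator{K}\bigr)\text{.}
\end{equation*}
Applying $\varphi \in \mathscr{K}$ and taking absolute values reduces the estimate to two pieces: $\sup\{|\varphi(b - \indicator{K}b\indicator{K})| : b \in \mulip(\A,\Lip,\mu_0),\ \varphi \in \mathscr{K}\}$, which tends to $0$ along $\compacts{\sigma(\M)}$ by tameness with respect to $\mu_0$; and $\sup\{|\mu_0(a)|\,|\varphi(\unit_{\unital{\A}} - \indicator{K})| : a \in \mulip(\A,\Lip,\mu),\ \varphi \in \mathscr{K}\}$.

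The hard part will be the second piece, and this is the unique place where regularity is genuinely used: it provides a uniform bound on $|\mu_0(a)|$ as $a$ ranges over $\mulip(\A,\Lip,\mu)$. I would choose $L \in \compacts{\sigma(\M)}$ containing compact sets witnessing $\mu \in \StateSpace(\A|\M)$ and $\mu_0 \in \StateSpace(\A|\M)$, so $\mu, \mu_0 \in \StateSpace(\A|L)$. By Proposition \ref{regular-characterization-prop}, regularity yields $r_L \in \R$ with $\|\indicator{L}a\indicator{L}\|_{\A^{\ast\ast}} \leq r_L$ for all $a \in \mulip(\A,\Lip,\mu)$. Since $\mu_0(\indicator{L}) = 1$, a Cauchy-Schwarz argument exactly as in the proof of Lemma \ref{pnorm-eq-norm-lem} gives $\mu_0(a) = \mu_0(\indicator{L}a\indicator{L})$, hence $|\mu_0(a)| \leq r_L$ uniformly. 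Therefore the second piece is at most $r_L \sup\{\varphi(\unit_{\unital{\A}} - \indicator{K}) : \varphi \in \mathscr{K}\}$, which tends to $0$ along $\compacts{\sigma(\M)}$ because $\mathscr{K}$, being tame, is tight by Theorem \ref{tame-implies-tight-thm}. Combining the two estimates gives the conclusion.

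The main obstacle, then, is precisely the uniform boundedness of $|\mu_0(a)|$ over $\mulip(\A,\Lip,\mu)$: without regularity, the ``mass escaping to infinity'' contribution $\mu_0(a)\varphi(\unit_{\unital{\A}} - \indicator{K})$ cannot be controlled, since $\mu_0(a)$ may well be unbounded; this is exactly what forces the regularity hypothesis. Everything else is routine: the decomposition above, the invariance of $\Lip$ under translation by scalar multiples of the unit, and the already-established implication that tame sets are tight.
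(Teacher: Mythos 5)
Your proposal is correct and follows essentially the same route as the paper's proof: the same translation $a \mapsto a - \mu_0(a)\unit_{\unital{\A}}$ into $\mulip(\A,\Lip,\mu_0)$, the same two-term decomposition of $\varphi(a-\indicator{K}a\indicator{K})$, with tameness at $\mu_0$ handling the first term and tightness (Theorem \ref{tame-implies-tight-thm}) handling the second. The only cosmetic difference is how you bound $|\mu_0(a)|$ uniformly over $\mulip(\A,\Lip,\mu)$ --- you go through Proposition \ref{regular-characterization-prop} and a Cauchy--Schwarz reduction to $\mu_0(\indicator{L}a\indicator{L})$, whereas the paper bounds it directly by $|\mu_0(a)-\mu(a)| \leq \Kantorovich{\Lip}(\mu,\mu_0) < \infty$ using the finite diameter of $\StateSpace(\A|L)$; these are equivalent uses of regularity.
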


\begin{proof}
The condition is sufficient by definition. Let us show that it is necessary. Assume $\mathscr{K}$ is $(\A,\Lip,\M)$-tame. Thus by Definition (\ref{tame-def}), there exists a local state $\nu$ of $(\A,\Lip,\M)$ such that:
\begin{equation}\label{regular-lipschitz-prop-eq0}
\lim_{K\in \compacts{\sigma(\M)}} \sup \{ |\varphi(a-\indicator{K}a\indicator{K}))| : {a \in \mulip(\A,\Lip,\nu)},\varphi \in \mathscr{K} \}= 0 \text{.}
\end{equation}
Let $\mu \in \StateSpace(\A|\M)$. By definition, there exists $K_0,K_1 \in \compacts{\sigma(\M)}$ such that $\mu \in \StateSpace(\A|K_0)$ and $\nu\in\StateSpace(\A|K_1)$. Thus $\mu, \nu \in S(\A,K)$ where $K = K_0\cup K_1 \in \compacts{\sigma(\M)}$.

Since $(\A,\Lip,\M)$ is regular, the set $\StateSpace(\A|K)$ has finite diameter for $\Kantorovich{\Lip}$. Thus by definition of $\Kantorovich{\Lip}$, there exists $r \in [0,\infty)\subseteq \R$ such that:
\begin{equation}\label{regular-lipschitz-prop-eq1}
\sup \{ |\mu(a)-\nu(a)| : a \in \sa{\unital{\A}}\text{ and } \Lip(a)\leq 1 \} = r \text{.}
\end{equation}
Now, for all $a\in \sa{\unital{\A}}$ with $\Lip(a)\leq 1$ and $\mu(a) = 0$, and for any $\varphi \in K$, we have:
\begin{align*}
|\varphi(a - \indicator{K} a \indicator{K})| &\leq |\varphi((a-\nu(a)\unit_{\unital{\A}})-\indicator{K}(a-\nu(a)\unit_{\unital{\A}})\indicator{K})|\\ 
&\quad +|\nu(a)||\varphi(\unit_{\unital{\A}}-\indicator{K})|\text{.}
\end{align*}
Hence:
\begin{equation}\label{regular-lipschitz-prop-eq2}
\begin{split}
0 &\leq \sup \{ |\varphi(a-\indicator{K}a\indicator{K}) | : \varphi \in \mathscr{K}\text{ and } a \in \mulip(\A,\Lip,\mu) \} \\ 
&\leq \sup\{ |\varphi(a-\indicator{K}a\indicator{K})| : \varphi \in \mathscr{K} \text{ and } a \in \mulip(\A,\Lip,\nu) \} \\
&\quad + r\sup \{ |\varphi(\unit_{\unital{\A}} - \indicator{K})| : \varphi \in \mathscr{K} \} \text{.}
\end{split}
\end{equation}
Since $\mathscr{K}$ is tame, it is tight, so $\lim_{K\in\compacts{\sigma(\M)}}\sup \{ \varphi(1-\indicator{K}) : \varphi \in K \} = 0$. Hence:
\begin{equation*}
\lim_{\alpha \in I} \sup \{ |\varphi(a - \indicator{K}a\indicator{K})| : {a \in \mulip(\A,\Lip,\mu)}\text{ and } \varphi \in \mathscr{K} \}= 0 \text{,}
\end{equation*}
as desired.
\end{proof}

\begin{proposition}\label{tame-bounded-prop}
Let $(\A,\Lip,\M)$ be a regular Lipschitz triple and $\mu \in \StateSpace(\A|\M)$. If $\mathscr{K}$ is a tame subset of $\StateSpace(\A)$ then there exists $r_{\mu,\mathscr{K}} \in [0,\infty)$ such that $\mathscr{K} \subseteq \cBall{\Kantorovich{\Lip}}{\mu}{r_{\mu,\mathscr{K}}}$.
\end{proposition}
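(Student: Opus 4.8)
The plan is to bound the Monge-Kantorovich distance $\Kantorovich{\Lip}(\varphi,\mu)$ uniformly over $\varphi \in \mathscr{K}$ by splitting each test element $a \in \mulip(\A,\Lip,\mu)$ into a ``local'' part $\indicator{K}a\indicator{K}$ and a ``tail'' part $a - \indicator{K}a\indicator{K}$, for a suitably chosen compact $K \in \compacts{\sigma(\M)}$. The local part is controlled by regularity via Proposition (\ref{regular-characterization-prop}), and the tail part is controlled by tameness.

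First I would fix $\mu \in \StateSpace(\A|\M)$, so there is $K_\mu \in \compacts{\sigma(\M)}$ with $\mu \in \StateSpace(\A|K_\mu)$. Since $\mathscr{K}$ is tame, Definition (\ref{tame-def}) together with Theorem (\ref{regular-lipschitz-tame-thm}) (using regularity) gives that the tame condition holds \emph{with respect to this particular $\mu$}: there exists $K_0 \in \compacts{\sigma(\M)}$, which I may take to contain $K_\mu$, such that
\begin{equation*}
\sup \{ |\varphi(a-\indicator{K_0}a\indicator{K_0})| : a \in \mulip(\A,\Lip,\mu),\ \varphi \in \mathscr{K} \} \leq 1 \text{.}
\end{equation*}
By Proposition (\ref{regular-characterization-prop}) applied with this $\mu$, there is $r_{K_0} \in \R$ with $\|\indicator{K_0}a\indicator{K_0}\|_{\A^{\ast\ast}} \leq r_{K_0}$ for all $a \in \mulip(\A,\Lip,\mu)$. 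Now for any $\varphi \in \mathscr{K}$ and any $a \in \mulip(\A,\Lip,\mu)$,
\begin{align*}
|\varphi(a)-\mu(a)| &= |\varphi(a)| \\
&\leq |\varphi(a - \indicator{K_0}a\indicator{K_0})| + |\varphi(\indicator{K_0}a\indicator{K_0})| \\
&\leq 1 + \|\indicator{K_0}a\indicator{K_0}\|_{\A^{\ast\ast}} \leq 1 + r_{K_0} \text{,}
\end{align*}
where I used $\mu(a)=0$ in the first line and, in the last line, that $|\varphi(b)| \leq \|b\|_{\A^{\ast\ast}}$ for the normal extension of $\varphi$ to $\A^{\ast\ast}$ and the self-adjoint element $b = \indicator{K_0}a\indicator{K_0}$. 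By Proposition (\ref{wasskant-alt-expression-prop}), taking the supremum over $a \in \mulip(\A,\Lip,\mu)$ gives $\Kantorovich{\Lip}(\varphi,\mu) \leq 1 + r_{K_0}$, so setting $r_{\mu,\mathscr{K}} = 1 + r_{K_0}$ yields $\mathscr{K} \subseteq \cBall{\Kantorovich{\Lip}}{\mu}{r_{\mu,\mathscr{K}}}$.

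The one point needing a little care — and the main obstacle, though a mild one — is the interplay between the choice of local state in the definition of tameness and the $\mu$ for which we want the bound: tameness a priori only asserts the limit vanishes for \emph{some} local state, so I must invoke Theorem (\ref{regular-lipschitz-tame-thm}) (which is exactly where regularity is used) to transfer the tame condition to the given $\mu$. One also needs $\mulip(\A,\Lip,\mu)$ to be the correct index set for expressing $\Kantorovich{\Lip}(\varphi,\mu)$ via Proposition (\ref{wasskant-alt-expression-prop}), and to note that $\mu(a)=0$ for $a$ in this set so that $|\varphi(a)-\mu(a)| = |\varphi(a)|$; the rest is the triangle inequality and the elementary bound $|\varphi(b)| \leq \|b\|_{\A^{\ast\ast}}$ for self-adjoint $b$.
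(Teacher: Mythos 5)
Your proof is correct and follows essentially the same route as the paper's: split each $a \in \mulip(\A,\Lip,\mu)$ into $\indicator{K_0}a\indicator{K_0}$ plus a tail, bound the first term by regularity via Proposition (\ref{regular-characterization-prop}) and the second by tameness. You are in fact slightly more explicit than the paper in invoking Theorem (\ref{regular-lipschitz-tame-thm}) to transfer the tameness condition from ``some'' local state to the given $\mu$, which is a point the paper's proof leaves implicit.
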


\begin{proof}
Let $\mu$ be a local state and $\mathscr{K}$ be a tame set. By Proposition (\ref{wasskant-alt-expression-prop}), we have:
\begin{equation*}
\Kantorovich{\Lip}(\varphi,\mu) = \sup \{ |\varphi(a)| : a\in \mulip(\A,\Lip,\mu) \} \text{.}
\end{equation*}
Now for all $\varphi \in \mathscr{K}$ and $K\in\compacts{\sigma(\M)}$, we have:
\begin{equation}
\begin{split}
|\varphi(a)| &\leq |\varphi(\indicator{K}a\indicator{K})| + |\varphi(a)-\varphi(\indicator{K}a\indicator{K})|\text{.}
\end{split}
\end{equation}

Since $(\A,\Lip,\M)$ is regular, for any $K\in\compacts{\sigma(\M)}$, there exists $r_K \in \R$ such that for all $a\in\mulip(\A,\Lip,\mu)$ we have $\|\indicator{K}a\indicator{K}\|_{\A^{\ast\ast}} \leq r_K$. On the other hand, since $\mathscr{K}$ is tame, there exists $K_0 \in\compacts{\sigma(\M)}$ such that:
\begin{equation}
\sup \{ |\varphi(a - \indicator{K_0}a\indicator{K_0})| : a \in \mulip(\A,\Lip,\mu)\text{ and } \varphi\in\mathscr{K} \} \leq 1 \text{.}
\end{equation}

Hence:

\begin{equation*}
\sup \{ |\varphi(a)| : a \in \mulip(\A,\Lip,\mu)\text{ and } \varphi \in \mathscr{K} \} \leq r_{K_0} + 1
\end{equation*}
which completes our proof.
\end{proof}

We are now ready to define the main object of study of this paper.

\section{Quantum Locally Compact Spaces}

We start this section with the definition of a {\qms}, which is a Lipschitz triple for which the {\mongekant} is topologically well-behaved. We also define a pointed {\qms}, as it will be the main object of study for quantum Gromov-Hausdorff convergence in \cite{Latremoliere12c}. We then turn to proving characterizations of {\qms s}, since the definition itself can prove challenging to establish directly. Our characterizations rely on the introduction of a locally convex topology {\tut} on the C*-algebra component of a {\qms} which will characterize {\qms s} as those Lipschitz triples for which the Lipschitz ball will be {\tut}-totally bounded. The topology {\tut} depends on the topography, unlike the weakly uniform topology used for a similar purpose in \cite{Latremoliere05b}.

\subsection{Definition}

\begin{definition}\label{qms-def}
A regular Lipschitz triple $(\A,\Lip,\M)$ is a \emph{\qms} when, for all tame subsets $\mathscr{K}$ of the state space $\StateSpace(\A)$, the topology of the metric space $(\mathscr{K},\Kantorovich{\Lip})$ is the relative topology induced by the weak* topology restricted on $\mathscr{K}$, where we denoted the {\mongekant} associated with $(\A,\Lip)$ by $\Kantorovich{\Lip}$.

A \emph{\qmss} is a {\qms} $(\A,\Lip,\M)$ where $\A$ is a separable C*-algebra.

If $(\A,\Lip,\M)$ is a {\qms} then $\Lip$ is called a \emph{Lip-norm}.
\end{definition}

We shall see in the Examples section below that locally compact metric spaces, compact quantum metric spaces as defined by Rieffel, and bounded separable quantum locally compact metric spaces as we defined, tentatively, in \cite{Latremoliere05b}, all fit within our notion of {\qms}. We also wish to contrast our notion with the very interesting concept of a $W^\ast$-metric spaces introduced by Kuperberg and Weaver in \cite{Kuperberg10}. We thank the referee of our present paper for introducing us to \cite{Kuperberg10}. The notion of a $W^\ast$-metric space is inspired by the standard approach to quantum error correction. A $W^\ast$-metric on a Von Neumann algebra $\mathfrak{V}$ acting on some Hilbert space $\mathscr{H}$ is given by a $W^\ast$-filtration $(\mathfrak{V}_s)_{s\in [0,\infty)}$ with $\mathfrak{V}_0 = \mathfrak{V}'$, where a $W^\ast$-filtration $(\mathfrak{V}_s)_{s > 0}$ is a one parameter family of dual operator systems such that $\mathfrak{V}_t\mathfrak{V}_s \subseteq \mathfrak{V}_{s+t}$ and $\mathfrak{V}_t = \bigcap_{r>t}\mathfrak{V}_r$ for all $s,t \in [0,\infty)$ (see \cite[Definition 2.1]{Kuperberg10}). This notion allows to define a distance between projections of the Von Neumann algebra $\mathfrak{V}$, and thus in turn, Kuperberg and Weaver can define Lipschitz elements, uniformly continuous elements, and many other central notions from metric space theory, which they apply to many contexts. Spectral triples naturally define $W^\ast$-metrics, and $W^\ast$-metrics in turn, naturally define Lipschitz Leibniz seminorms in the sense of \cite{Rieffel00}. 

A common thread to our notion and the notion of a $W^\ast$-metric may be the ability to define forms of locality. In \cite[Definition 3.17]{Kuperberg10}, a notion of local convergence between $W^\ast$-metrics is introduced, where the filtrations are used to provide a mean to ``slice'' the quantum space and approximate each slice. Our research into the notion of Gromov-Hausdorff convergence for a class of {\qms s} suggests, as hinted in many proofs in this paper, the use of a notion of ``slices'' as well, though at the level of the state space of the C*-algebra: namely the subsets $\StateSpace(\A|K)$, where $K$ ranges among all compact subsets of the Gel'fand spectrum of $\M$ for $(\A,\M)$ a topographic quantum space. While quite different in practice, these notions both attempt in their own way to propose a meaning for quantum locality. 

A key difference is that our notion of {\qms} relies on the topology on the state space induced by the {\mongekant} whereas the notion of a $W^\ast$-metric does not seem to have any such topological requirements. In \cite[Definition 4.19]{Kuperberg10}, a Lipschitz seminorm is defined from a $W^\ast$-metric, and it would be interesting to investigate, for the various examples introduced in \cite{Kuperberg10}, when such Lipschitz seminorms can be used to define {\qms s}. One would have, for a given $W^\ast$-metric on some Von Neumann algebra, to consider the closure of the Lipschitz algebras for the Von Neumann algebra norm and find a proper topography for these structures --- presumably, the topography should be related with the filtration. The topological questions which we raise here would require much work to be addressed in this fascinating setting, and we leave this endeavor for further publications.

We take a small detour from our main purpose to characterize Lipschitz triples which are indeed {\qms s} in order to establish a property which answers a natural question: given a {\qms} $(\A,\Lip,\M)$, what geometry does the spectrum of $\M$ inherits from the metric data? This is a bit subtle and the subject of the following result.

\begin{theorem}\label{qms-topographic-metric-thm}
Let $(\A,\Lip,\M)$ be a {\qms}. Let $\sigma(\M)$ be the Gel'fand spectrum of $\M$. For any two states $\omega,\rho$ of $\M$, define:
\begin{equation*}
\mathsf{d}(\omega,\rho) = \inf \{ \Kantorovich{\Lip}(\varphi,\psi) : \varphi,\psi \in \StateSpace(\A)\text{ and } [\varphi]_\M = \omega\text{ and } [\psi]_\M = \rho \} \text{.}
\end{equation*}
Then $\mathsf{d}$ is an extended metric on $\StateSpace(\M)$ which, for all $K\in\compacts{\sigma(\M)}$, metrizes the relative topology induced by the weak* topology $\sigma(\M^\ast,\M)$ on $\{ \varphi \in \StateSpace(\M): \varphi(\indicator{K})=1\}$. In particular, the restriction of $\mathsf{d}$ to $\sigma(\M)$ metrizes the topology of $\sigma(\M)$.
\end{theorem}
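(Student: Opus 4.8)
The plan is to regard $\mathsf{d}$ as the image of $\Kantorovich{\Lip}$ under the weak$^*$-continuous affine surjection $q=[\,\cdot\,]_\M\colon\StateSpace(\A)\to\StateSpace(\M)$ of Proposition \ref{restriction-prop}, and to treat the metric axioms and the topological statements separately. Symmetry of $\mathsf d$ is immediate, and $\mathsf d(\omega,\omega)=0$ because any $\omega$ has an extension $\varphi\in\StateSpace(\A)$ by Proposition \ref{restriction-prop}(2) and $\Kantorovich{\Lip}(\varphi,\varphi)=0$. For point separation, if $\mathsf d(\omega,\rho)=0$ pick states $\varphi_n$ extending $\omega$ and $\psi_n$ extending $\rho$ with $\Kantorovich{\Lip}(\varphi_n,\psi_n)\to 0$, and run the approximation argument from the proof of Proposition \ref{wasskant-finer-than-weak-prop}: for $a\in\sa{\A}$, approximate $a$ in norm by $b\in\sa{\unital\A}$ with $\Lip(b)<\infty$ and bound $|\varphi_n(b)-\psi_n(b)|\le\max\{\Lip(b),1\}\,\Kantorovich{\Lip}(\varphi_n,\psi_n)$; this gives $(\varphi_n-\psi_n)(a)\to 0$ for all $a\in\sa{\A}$, and restricting to $\sa{\M}$ yields $\omega(a)-\rho(a)=\lim_n(\varphi_n-\psi_n)(a)=0$, hence $\omega=\rho$.

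Next I would prove the metrization of the slices $\StateSpace(\M|K):=\{\omega\in\StateSpace(\M):\omega(\indicator K)=1\}$ for $K\in\compacts{\sigma(\M)}$. The same approximation argument shows that $\mathsf d$-convergence implies weak$^*$-convergence on all of $\StateSpace(\M)$, so only the converse on each slice is at issue. Fix $K$ and a net $(\omega_\alpha)$ in $\StateSpace(\M|K)$ converging weak$^*$ to $\omega\in\StateSpace(\M|K)$, and suppose toward a contradiction that $\mathsf d(\omega_\alpha,\omega)\ge\varepsilon>0$ along a subnet. Since $\omega_\alpha(\indicator K)=1$, every state of $\A$ extending $\omega_\alpha$ lies in the weak$^*$-compact set $\StateSpace(\A|K)$; choose such a $\varphi_\alpha$. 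Passing to a further subnet, $\varphi_\beta\to\varphi_\infty$ weak$^*$ with $\varphi_\infty\in\StateSpace(\A|K)$, and weak$^*$-continuity of the restriction map (Proposition \ref{restriction-prop}(3)) gives $[\varphi_\infty]_\M=\omega$. Now $\StateSpace(\A|K)$ is tame by Proposition \ref{obvious-tame-prop}, so by Definition \ref{qms-def} the metric $\Kantorovich{\Lip}$ induces the weak$^*$ topology on $\StateSpace(\A|K)$; hence $\Kantorovich{\Lip}(\varphi_\beta,\varphi_\infty)\to 0$, and since $\varphi_\beta$ extends $\omega_\beta$ and $\varphi_\infty$ extends $\omega$ we get $\mathsf d(\omega_\beta,\omega)\le\Kantorovich{\Lip}(\varphi_\beta,\varphi_\infty)\to 0$, a contradiction. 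Thus on $\StateSpace(\M|K)$ the identity is a continuous bijection from a compact space onto a Hausdorff space ($\mathsf d$ separates points), hence a homeomorphism. For the final assertion, identify $\sigma(\M)$ with the characters $\delta_x$ inside $\StateSpace(\M)$ (Gel'fand topology $=$ relative weak$^*$ topology); given $x_0$, local compactness furnishes a compact neighbourhood $K$, and for $x$ interior to $K$ one has $\delta_x(\indicator K)=1$, so a weak$^*$-neighbourhood of $\delta_{x_0}$ in $\sigma(\M)$ lies in $\StateSpace(\M|K)$, on which $\mathsf d$ metrizes the weak$^*$ topology; moreover $\mathsf d$ is finite on $\sigma(\M)$ since any $x,y$ lie in a common compact $K$ (a union of compact neighbourhoods), so $\delta_x,\delta_y$ and all their extensions lie in $\StateSpace(\A|K)$, which has finite $\Kantorovich{\Lip}$-diameter by regularity (Proposition \ref{regular-characterization-prop}).

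The one axiom not covered above is the triangle inequality, and I expect it to be the main obstacle: the image of an extended metric under an affine surjection with convex fibres need not be subadditive in general (the defect being controlled by the diameter of the intermediate fibre). The natural route is to identify $\mathsf d$ with the {\mongekant} $\Kantorovich{\widetilde\Lip}$ of the restriction $\widetilde\Lip$ of $\Lip$ to $\sa{\unital\M}$, which is automatically an extended pseudometric. The inequality $\mathsf d\ge\Kantorovich{\widetilde\Lip}$ is immediate, since $|\omega(b)-\rho(b)|=|\varphi(b)-\psi(b)|\le\Kantorovich{\Lip}(\varphi,\psi)$ whenever $\varphi,\psi$ extend $\omega,\rho$ and $b\in\sa{\unital\M}$ has $\Lip(b)\le 1$. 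The reverse inequality is an extension problem: given that $\omega$ and $\rho$ are $\Kantorovich{\widetilde\Lip}$-close one must produce states of $\A$ extending them that stay $\Kantorovich{\Lip}$-close. A bare Hahn--Banach extension of the Hermitian functional $\omega-\rho$ dominated by $b\mapsto\Kantorovich{\widetilde\Lip}(\omega,\rho)\,\Lip(b)$ need not be a difference of two states, precisely because the Lipschitz ball of a {\qms} is unbounded; taming this requires the topographic structure and regularity --- splitting the extension into a compactly supported part, where $\|\indicator K a\indicator K\|_{\A^{\ast\ast}}$ is bounded on the Lipschitz ball by Proposition \ref{regular-characterization-prop}, and a tail rendered negligible by tightness of tame sets (Theorem \ref{tame-implies-tight-thm}). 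This positivity-preserving extension step is where the real work lies.
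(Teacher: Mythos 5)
Your treatment of symmetry, point separation, the metrization of the slices $\{\omega\in\StateSpace(\M):\omega(\indicator{K})=1\}$, and the final claim about $\sigma(\M)$ is sound, and in places cleaner than the paper's own argument: your direct approximation bound $|\varphi_n(a)-\psi_n(a)|\leq 2\|a-b\|_{\unital{\A}}+\max\{\Lip(b),1\}\Kantorovich{\Lip}(\varphi_n,\psi_n)$ avoids the subnet extractions the paper performs for point separation, and your compact-to-Hausdorff finish on each slice parallels the paper's subsequence-of-subsequence argument. However, you have correctly diagnosed and then not closed the one genuine gap: the triangle inequality is nowhere established. The route you propose for it --- identifying $\mathsf{d}$ with $\Kantorovich{\widetilde{\Lip}}$ for the restricted seminorm and proving the reverse inequality by a positivity-preserving Hahn--Banach extension that keeps two state extensions $\Kantorovich{\Lip}$-close --- is a quantum coupling problem substantially harder than the theorem itself, and you offer no argument that it can be carried out. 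As written, the proposal shows only that $\mathsf{d}$ is a symmetric, point-separating extended function.

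The device the paper uses bypasses couplings entirely. Set $\mathsf{l}(\lambda)=\sup\{|\lambda(a)| : a\in\sa{\A}\text{ and }\Lip(a)\leq 1\}$ for $\lambda\in\A^\ast$, so that $\Kantorovich{\Lip}(\varphi,\psi)=\mathsf{l}(\varphi-\psi)$, and let $\mathsf{l}'(\eta)=\inf\{\mathsf{l}(\lambda) : [\lambda]_\M=\eta\}$ be the quotient seminorm of $\mathsf{l}$ by the subspace $\{\lambda\in\A^\ast : [\lambda]_\M=0\}$. A quotient of an (extended-valued) seminorm is automatically subadditive: given near-optimal $\lambda_1$ over $\omega-\rho$ and $\lambda_2$ over $\rho-\tau$, the functional $\lambda_1+\lambda_2$ lies over $\omega-\tau$ and $\mathsf{l}(\lambda_1+\lambda_2)\leq\mathsf{l}(\lambda_1)+\mathsf{l}(\lambda_2)$; no requirement that the intermediate witnesses agree, or even be differences of states, ever arises. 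The paper then identifies $\mathsf{d}(\omega,\rho)$ with $\mathsf{l}'(\omega-\rho)$ and concludes. This quotient-seminorm observation is the missing idea; your last paragraph should be replaced by it rather than by the extension program you outline.
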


\begin{proof}
We first prove that $\mathsf{d}$ thus defined is an extended metric on $\StateSpace(\M)$. First, assume $\mathsf{d} (\rho, \omega ) = 0$. Let $\varepsilon > 0$. By definition of $\mathsf{d}$, there exists $\varphi_\varepsilon, \psi_\varepsilon \in \StateSpace(\A)$ such that $[\varphi_\varepsilon]_\M = \rho$ and $[\psi_\varepsilon]_\M = \omega$, while $\Kantorovich{\Lip}(\varphi_\varepsilon,\psi_\varepsilon) < \varepsilon$.  Recall from \cite{Pedersen79} that the quasi-state space of $\A$ is the set of all linear positive functionals of $\A$ with norm at most $1$, and this set is weak* compact. Since the quasi-state space of $\A$ is weak* compact, there exists some directed set $I$ and some cofinal monotone function $\alpha : I \rightarrow (0,\infty)$ (where the order on the directed set $(0,\infty)$ is dual to the usual order) such that the subnet $(\varphi_{\alpha(\lambda)})_{\lambda\in I}$ of $(\varphi_\varepsilon)_{\varepsilon>0}$ is  weak* convergent to some quasi-state $\varphi$. We then have $[\varphi]_\M = \rho$ and thus $\varphi \in \StateSpace(\A)$. By weak* compactness of the quasi-state-space again, there exists a directed set $J$ and a cofinal, monotone map $\beta : J \rightarrow I$ such that the subnet $(\psi_{\alpha\circ\beta(\lambda)})_{\lambda\in J}$ of $(\psi_{\alpha(\lambda)})_{\lambda\in I}$ weak* converges to some quasi-state $\psi$. Since $[\psi]_\M = \omega$, we have $\psi \in \StateSpace(\A)$. Thus, $(\varphi_{\alpha\circ\beta(\lambda)})_{\lambda\in J}$ and $(\psi_{\alpha\circ\beta(\lambda)})_{\lambda\in J}$ are weak* convergent to, respectively, the states $\varphi$ and $\psi$.

Fix $a\in\sa{\A}$ with $\Lip(a)\leq 1$ and $\varepsilon > 0$. By construction, there exists $\lambda_0 \in J$ such that for all $\lambda \in J$ with $\lambda \succ \lambda_0$ we have $\alpha\circ\beta(\lambda) < \frac{1}{3}\varepsilon$. Thus by construction:
\begin{equation*}
\forall \lambda\in J \quad (\lambda\succ \lambda_0) \implies \Kantorovich{\Lip}(\varphi_{\alpha\circ\beta(\lambda)},\psi_{\alpha\circ\beta(\lambda)}) < \frac{1}{3}\varepsilon\text{.}
\end{equation*} 
Then, by weak* convergence, there exists $\lambda_1,\lambda_2 \in J$ such that:
\begin{equation*}
\forall \lambda\in J\quad (\lambda \succ \lambda_1)\implies|\varphi_{\alpha\circ\beta(\lambda)}(a) - \varphi(a)| < \frac{1}{3}\varepsilon\text{,}
\end{equation*}
and:
\begin{equation*}
\forall \lambda\in J\quad (\lambda\succ \lambda_2)\implies |\psi_{\alpha\circ\beta(\lambda)}(a) - \psi(a)| < \frac{1}{3}\varepsilon\text{.}
\end{equation*}
Since $J$ is a directed set, there exists $\lambda \in J$ with $\lambda\succ \lambda_0, \lambda\succ \lambda_1, \lambda \succ \lambda_2$. Thus:
\begin{equation*}
\begin{split}
|\varphi(a)-\psi(a)| &\leq |\varphi(a)-\varphi_{\alpha\circ(\lambda)}(a)| + |\varphi_{\alpha\circ\beta(\lambda)}(a) - \psi_{\alpha\circ\beta(\lambda)}(a)| \\
&\quad + |\psi_{\alpha\circ\beta(\lambda)}(a) - \psi(a)| \\
&< \varepsilon \text{.}
\end{split}
\end{equation*}
Hence, as $\varepsilon > 0$ was arbitrary, we have $\varphi(a)=\psi(a)$.

Therefore, $\Kantorovich{\Lip}(\varphi,\psi) = 0$. Now, $\Kantorovich{\Lip}$ is an extended metric, so $\varphi = \psi$ and thus $\rho = \omega$.

Symmetry is clear. The triangle inequality requires a bit of notation, and can be established as follows.

Let:
\begin{equation*}
\mathsf{l} : \psi \in \A^\ast \mapsto \sup \{ |\psi(a)| : a\in\sa{\A}\text{ and } \Lip(a)\leq 1\}
\end{equation*}
and note that $\Kantorovich{\Lip}(\varphi,\psi) = \mathsf{l}(\varphi-\psi)$. Of course, $\mathsf{l}$ may take the value $\infty$, but it is a seminorm on the subset $\{\lambda \in \A^\ast : \mathsf{l}(\lambda)<\infty\}$. Now, let:
\begin{equation*}
\mathsf{l}' : \omega \in \M^\ast \mapsto \inf \{ \mathsf{l}(\psi) : [\psi]_\M = \omega \}\text{.}
\end{equation*}
Note that by construction, $\mathsf{d}(\omega,\rho) = \mathsf{l}'(\omega-\rho)$. On the other hand, $\mathsf{l}'$ is the quotient seminorm of $\mathsf{l}$ by the space $\{\psi \in \A^\ast : [\psi]_\M = 0 \}$. Thus $\mathsf{d}$ satisfies the triangle inequality.
 
Let $C\in \compacts{\sigma(\M)}$ be given. Let $S_C$ be the space of all Radon probability measures on $\sigma(\M)$ supported in $C$. By definition, $[\StateSpace(\A|C)]_\M = S_C$. Since $(\A,\Lip,\M)$ is a {\qms}, and since $\StateSpace(\A|C)$ is a tame subset of $\StateSpace(\A)$, the weak* topology of $\StateSpace(\A|C)$ is metrized by $\Kantorovich{\Lip}$. In particular, since $\StateSpace(\A|C)$ is weak*-compact and metrizable, it is weak*-separable. Hence $S_C$ is weak* separable. Assume $(\varphi_n)_{n \in \N}$ is a sequence in $S_C$ weak* converging to some $\varphi_\infty$. Since $S_C$ is weak* compact, $\varphi_\infty \in S_C$. Now, let $(\varphi_{m(n)})_{n\in\N}$ be an arbitrary subsequence of $(\varphi_n)_{n \in \N}$. Let $(\psi_n)_{n\in\N}$ be a sequence in $\StateSpace(\A|C)$ such that for all $n\in\N$ we have $[\psi_n]_\M = \varphi_{m(n)}$. Since $\StateSpace(\A|C)$ is weak* compact, there exists a subsequence $(\psi_{s(n)})_{n\in\N}$ which weak* converges to some $\psi_\infty \in \StateSpace(\A|C)$. By weak* continuity of the restriction map and uniqueness of the weak* limit, we conclude that $[\psi_\infty] = \varphi_\infty$.
On the other hand, since $\Kantorovich{\Lip}$ metrizes the weak* topology on $\StateSpace(\A|C)$ because $(\A,\Lip,\M)$ is a {\qms}, we conclude that $\lim_{n\rightarrow \infty}\Kantorovich{\Lip}(\psi_{s(n)},\psi_\infty) = 0$. Hence by definition, $\lim_{n\rightarrow\infty}\mathsf{d}(\varphi_{m(s(n))},\varphi_\infty) = 0$.

Thus, the sequence $(\mathsf{d}(\varphi_n,\varphi_\infty))_{n\in\N}$ has the property that every subsequence has a subsequence converging to $0$. Hence, the sequence itself converges to $0$, as desired.

Identifying $\mathsf{d}$ with the distance it induces on $C$ by identifying points of $C$ with their Dirac probability measures, we see that $C$ is metrized by $\mathsf{d}$. This is sufficient to conclude that $\mathsf{d}$ metrizes $\sigma(\M)$: if $(x_\alpha)_{\alpha\in I}$ is some net in $\sigma(\M)$ which converges to some $x\in \sigma(\M)$ then the set $\{x_\alpha,x : \alpha \in I\}$ is compact in $\sigma(\M)$, and thus metrized by $\mathsf{d}$.
\end{proof}

Theorem (\ref{qms-topographic-metric-thm}) gives a necessary regularity condition of the topography of a {\qms}, which brings potentially useful topological results in our context. We record this necessary condition here.

\begin{corollary}
Let $(\A,\Lip,\M)$ be a {\qms}. Then $\sigma(\M)$ is a paracompact locally compact metric space.
\end{corollary}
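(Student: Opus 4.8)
The plan is to recognize that the corollary is an immediate consequence of Theorem~\ref{qms-topographic-metric-thm} combined with two standard facts. First I would record that the local compactness and the Hausdorff property of $\sigma(\M)$ require nothing beyond C*-algebra theory: since $\M$ is an Abelian C*-algebra, Gel'fand duality gives $\M \cong C_0(\sigma(\M))$ with $\sigma(\M)$ locally compact Hausdorff, an identification already used in the proof of Lemma~\ref{topographic-approx-identity-lem}.

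Next, the metrizability of $\sigma(\M)$ is exactly the last assertion of Theorem~\ref{qms-topographic-metric-thm}: the restriction of the extended metric $\mathsf{d}$ to $\sigma(\M)$, under the identification of each $x \in \sigma(\M)$ with its Dirac probability measure, metrizes the topology of $\sigma(\M)$. Since $\min\{\mathsf{d},1\}$ is then a genuine, finite-valued metric inducing the same topology on $\sigma(\M)$ (the $\mathsf{d}$-balls of radius $<1$ are unchanged), the space $\sigma(\M)$ is metrizable in the usual sense.

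Finally I would invoke Stone's theorem, that every metrizable topological space is paracompact, to conclude that $\sigma(\M)$ is a paracompact locally compact metric space. Equivalently, since $\sigma(\M)$ is locally compact, paracompactness amounts to $\sigma(\M)$ being a topological disjoint sum of $\sigma$-compact open subspaces, a decomposition that could also be extracted by hand from a locally finite cover by relatively compact open sets; but the appeal to Stone's theorem is the cleanest route. There is essentially no obstacle in this argument: the only point warranting a word of care is the passage from the a priori extended metric $\mathsf{d}$ to a genuine metric inducing the same topology, which is routine.
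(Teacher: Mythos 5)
Your proposal is correct and follows essentially the same route as the paper: local compactness and Hausdorffness come for free from Gel'fand duality, metrizability is the final assertion of Theorem~(\ref{qms-topographic-metric-thm}), and paracompactness is Stone's theorem. Your extra remark about replacing the extended metric $\mathsf{d}$ by $\min\{\mathsf{d},1\}$ is a reasonable (if routine) point of care that the paper leaves implicit.
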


\begin{proof}
This follows from a Theorem of A. H. Stone which states that all metric spaces are paracompact in \cite{Stone48}.
\end{proof}

Since $\sigma(\M)$ is a metric space, it is natural to endow it with the associated Lipschitz seminorm. One should note that this seminorm is usually dominated by, and not equal to $\Lip$.

While this paper lays the foundation for our work in the locally compact context for metric noncommutative geometry, we find it a good place to introduce a notion which will occupy the central role in \cite{Latremoliere12c} where we develop the quantum Gromov-Hausdorff convergence for pointed {\qmss}:

\begin{definition}\label{p-qms-def}
A \emph{pointed \qms} $(\A,\Lip,\M,\mu)$ is a {\qms} $(\A,\Lip,\M)$ and a state $\mu \in \StateSpace(\A|\M)$ such that $[\mu]_\M$ is pure. 
\end{definition}

Definition ({\ref{qms-def}) involves proving that the {\mongekant} gives the weak* topology on all tame subsets of the local state space of a regular Lipschitz triple. This should appear quite a daunting task in general: one may find it rather challenging to describe all tame sets beyond the very definition we have given for them. Of course, it should not be too surprising that constructing {\qms s} is a arduous task, as it already is so with compact quantum metric spaces. On the other hand, a characterization of {\qms} more amenable to C*-algebraic methods would prove quite useful. Rieffel's insight for compact quantum metric space was to characterize compact quantum metric spaces in terms of the unit ball for the Lip-norm.

We propose a similar characterization, though of course ours will be a bit more involved to handle our greater level of generality. A strong hint for our characterization is our work in \cite{Latremoliere05b}, whose main result was recalled in our introduction. Our characterization will make minimal reference to the state space of a {\qms} and rather focus on a C*-algebraic criterion of total boundedness of a family of sets for the norm. To obtain such results, the bridge on which information passes from the state space to the C*-algebra is a topology on the latter defined through the former. This is the matter of the next section.

\subsection{The Main Characterization}

The weakly uniform topology of \cite{Latremoliere05b} was well suited for bounded quantum metric spaces, where all weak* compact sets are tame, as we shall discuss in the Examples section later in this paper. Yet, in general, our tameness condition depends on the topographic substructure, and thus our new topology does too. We introduce:

\begin{definition}
Let $(\A,\M)$ be a topographic quantum space. We define for any $K\in\compacts{\sigma(\M)}$ the seminorm: 
\begin{equation*}
\pnorm{K} : a \in \unital\A \mapsto \|\indicator{K}a\indicator{K}\|_{\A^{\ast\ast}}\textrm{.}
\end{equation*}
The $(\A,\M)$-{\tu} is the locally convex topology on $\unital{\A}$ generated by the set of seminorms $\{ \pnorm{K} : K \in \compacts{\sigma(\M)} \}$. We denote this topology by {\tut$(\A,\M)$}.
\end{definition}

\begin{proposition}
Let $(\A,\M)$ be a topographic quantum space. The $(\A,\M)$-{\tu} is Hausdorff.
\end{proposition}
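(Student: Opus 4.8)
The plan is to show that the family of seminorms $\{\pnorm{K} : K \in \compacts{\sigma(\M)}\}$ separates points of $\unital{\A}$, which is the standard criterion for a locally convex topology generated by a family of seminorms to be Hausdorff. So I would fix a nonzero $a \in \unital{\A}$ and produce a single compact $K \subseteq \sigma(\M)$ with $\pnorm{K}(a) = \|\indicator{K}a\indicator{K}\|_{\A^{\ast\ast}} > 0$. First I would reduce to the case $a \in \sa{\unital{\A}}$: writing $a = \Re(a) + i\Im(a)$ with $\Re(a),\Im(a)$ self-adjoint, at least one of these is nonzero, and since $\indicator{K}(\cdot)\indicator{K}$ is a $\ast$-linear map, $\pnorm{K}(\Re(a))$ and $\pnorm{K}(\Im(a))$ are both controlled by $\pnorm{K}(a)$ (indeed $\indicator{K}\Re(a)\indicator{K} = \Re(\indicator{K}a\indicator{K})$, so $\|\indicator{K}\Re(a)\indicator{K}\|_{\A^{\ast\ast}} \leq \|\indicator{K}a\indicator{K}\|_{\A^{\ast\ast}}$), hence a separating $K$ for the self-adjoint part separates $a$ as well. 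Thus we may assume $a = a^\ast \neq 0$.

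Next, by Lemma (\ref{pnorm-eq-norm-lem}) applied with $p = \indicator{K}$ (noting $\indicator{K} \in \M^{\ast\ast} \subseteq \A^{\ast\ast}$ is a projection), we have
\begin{equation*}
\pnorm{K}(a) = \|\indicator{K}a\indicator{K}\|_{\A^{\ast\ast}} = \sup\{ |\varphi(a)| : \varphi \in \StateSpace(\A)\text{ and } \varphi(\indicator{K}) = 1\} = \sup\{|\varphi(a)| : \varphi \in \StateSpace(\A|K)\}.
\end{equation*}
So it suffices to find $K \in \compacts{\sigma(\M)}$ and a state $\varphi \in \StateSpace(\A|K)$ with $\varphi(a) \neq 0$. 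Since $a \neq 0$, there is a state $\psi \in \StateSpace(\A)$ with $\psi(a) \neq 0$ (states separate points of a C*-algebra). Now I would invoke Proposition (\ref{local-state-space-dense-prop}): the local state space $\StateSpace(\A|\M)$ is norm dense in $\StateSpace(\A)$, so there is a local state $\varphi$ — lying in some $\StateSpace(\A|K)$ — with $\|\varphi - \psi\|_{\A^\ast} < |\psi(a)| / \|a\|_{\unital{\A}}$ (if $a=0$ we are already done, so $\|a\|_{\unital{\A}} > 0$), whence $|\varphi(a)| \geq |\psi(a)| - \|\varphi-\psi\|_{\A^\ast}\|a\|_{\unital{\A}} > 0$. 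For this $K$ we then get $\pnorm{K}(a) \geq |\varphi(a)| > 0$, which is exactly what we needed: the seminorms separate points, so {\tut$(\A,\M)$} is Hausdorff.

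The only real point requiring care is the reduction to the self-adjoint case together with the clean identification of $\pnorm{K}(a)$ via Lemma (\ref{pnorm-eq-norm-lem}); neither is hard, but one should be careful that Lemma (\ref{pnorm-eq-norm-lem}) is stated for $a \in \sa{\unital{\A}}$, which is precisely why the reduction is performed first. An alternative and perhaps slightly more direct route avoids Lemma (\ref{pnorm-eq-norm-lem}) entirely: having found a local state $\varphi \in \StateSpace(\A|K)$ with $\varphi(a) \neq 0$, one simply observes $\varphi(\indicator{K}a\indicator{K}) = \varphi(a)$ (since $\varphi(\indicator{K}) = 1$ forces $\indicator{K}$ to act as the identity in the GNS representation of $\varphi$, a Cauchy-Schwarz argument as in the proof of Proposition (\ref{obvious-tame-prop})), so $\|\indicator{K}a\indicator{K}\|_{\A^{\ast\ast}} \geq |\varphi(\indicator{K}a\indicator{K})| = |\varphi(a)| > 0$, bypassing the self-adjointness restriction. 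I expect the writing to essentially consist of spelling out this density-of-local-states argument; there is no genuine obstacle.
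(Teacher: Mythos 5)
Your proof is correct in substance but takes a genuinely different route from the paper's. The paper never touches the state space: it assumes $\pnorm{K}(a)=0$ for all $K\in\compacts{\sigma(\M)}$, deduces $f_1af_2=0$ for all compactly supported $f_1,f_2\in\sa{\M}$, and then kills $a$ using the compactly supported approximate unit $(f_\alpha)_{\alpha\in I}$ of Lemma (\ref{topographic-approx-identity-lem}) via $\|a\|_\A\leq\|a-f_\alpha af_\alpha\|_\A\to 0$. Your argument instead dualizes: by Lemma (\ref{pnorm-eq-norm-lem}), $\pnorm{K}$ is the sup of $|\varphi(\cdot)|$ over $\StateSpace(\A|K)$, and the norm density of the local state space (Proposition \ref{local-state-space-dense-prop}) produces a local state not annihilating a given nonzero element. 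Both work; the paper's is more self-contained and handles general (non-self-adjoint) $a$ directly, while yours makes the conceptual point that Hausdorffness is exactly the statement that local states separate points, which is arguably more illuminating.

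Two small points deserve care. First, the step ``since $a\neq 0$ there is $\psi\in\StateSpace(\A)$ with $\psi(a)\neq 0$'' is applied to $a\in\sa{\unital{\A}}$, and what you need is that states of $\A$ (not of $\unital{\A}$) separate points of the \emph{unitization}; this is true --- e.g.\ because the universal representation of $\A$ is non-degenerate, so its extension to $\unital{\A}$ is faithful and the relevant vector states restrict to genuine states of $\A$ --- but it is not literally the textbook fact you cite, and it does require a sentence of justification. (To be fair, the paper's own proof only treats $a\in\A$ and silently ignores the scalar part.) Second, in the estimate $|\varphi(a)-\psi(a)|\leq\|\varphi-\psi\|_{\A^\ast}\|a\|_{\unital{\A}}$ the functional $\varphi-\psi$ is being evaluated at an element of $\unital{\A}$ while its norm is taken in $\A^\ast$; since $(\varphi-\psi)(a)=(\varphi-\psi)(a-\lambda\unit_{\unital{\A}})$ with $a-\lambda\unit_{\unital{\A}}\in\A$ of norm at most $2\|a\|_{\unital{\A}}$, this costs at most a factor of $2$, which is harmless but should be acknowledged. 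Neither issue affects the validity of the approach.
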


\begin{proof}
Let $a\in \A$ such that for all $K \in \compacts{\sigma(\M)}$, we have $\pnorm{K}(a) = 0$. Thus $\|\indicator{K}a\indicator{K}\|_{\A^{\ast\ast}} = 0$. If $f_1,f_2 \in \sa{\M}$ are compactly supported, with respective supports $K_1, K_2$, then:
\begin{equation*}
\begin{split}
\|f_1af_2\|_\A &= \|f_1\indicator{(K_1\cup K_2)} a \indicator{(K_1\cup K_2)}f_2\|_{\A^{\ast\ast}} \\
&\leq \|f_1\|_\A\|f_2\|_\A\|\indicator{(K_1\cup K_2)}a\indicator{(K_1\cup K_2)}\|_{\A^{\ast\ast}} = 0\text{.}
\end{split}
\end{equation*}
Hence, for the approximate identity $(f_\alpha)_{\alpha\in I}$ given by Lemma (\ref{topographic-approx-identity-lem}) and all $\alpha\in I$:
\begin{equation*}
\begin{split}
0\leq \|a\|_\A &\leq \|a-f_\alpha a f_\alpha\|_\A + \|f_\alpha a f_\alpha\|_\A = \|a-f_\alpha a f_\alpha\|_\A \\
&\leq \|a-f_\alpha a\|_\A + \|f_\alpha\|_\A\|a-af_\alpha\|_{\A} \stackrel{\alpha\in I}{\longrightarrow} 0\text{,}
\end{split}
\end{equation*}
i.e. $a=0$ as desired.
\end{proof}

The following comparison between our topographic uniform topology and our weakly uniform topology, from \cite{Latremoliere05b}, places {\tut} among the many classical topologies on a C*-algebra. For the convenience of the reader, we shall now recall the definition of the weakly uniform topology, and refer to \cite{Latremoliere05b} for its relation to the weak, strict, strongly uniform and norm topologies.

\begin{definition}
Let $\A$ be a C*-algebra and let $\mathfrak{S}$ be the set of all weak* compact subsets of $\StateSpace(\A)$. For each $\mathscr{K} \in \mathfrak{S}$ we define:
\begin{equation*}
p_{\mathscr{K}} : a \in \A \longmapsto \sup \{ |\varphi(a)| : \varphi \in \mathscr{K} \} \text{.}
\end{equation*}
Then $p_{\mathscr{K}}$ is a seminorm on $\A$ for all $\mathscr{K} \in \mathfrak{S}$. The \emph{weakly uniform topology} on $\A$ is the locally convex topology on $\A$ generated by the set $\{ p_{\mathscr{K}} : \mathscr{K}\in\mathfrak{S} \}$ of seminorms of $\A$.
\end{definition}

\begin{proposition}\label{bounded-wu-eq-tu-prop}
Let $(\A,\M)$ be a topographic quantum space and $\B\subseteq \A$ be a bounded set. Then the {\tu} and the weakly uniform topology gives the same relative topology to $\B$.
\end{proposition}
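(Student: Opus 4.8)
The plan is to prove the two topology inclusions separately; only one of them will use the hypothesis that $\B$ is bounded. I will write $\pnorm{K}$ ($K\in\compacts{\sigma(\M)}$) for the seminorms generating the topographic uniform topology and $p_{\mathscr{K}}$ ($\mathscr{K}$ weak* compact in $\StateSpace(\A)$) for those generating the weakly uniform topology. \emph{First}, the topographic uniform topology is coarser than the weakly uniform topology on all of $\A$: for self-adjoint $a\in\unital{\A}$, Lemma \ref{pnorm-eq-norm-lem} applied to the projection $\indicator{K}$ identifies $\pnorm{K}(a)$ with $\sup\{|\varphi(a)|:\varphi\in\StateSpace(\A|K)\}=p_{\StateSpace(\A|K)}(a)$, and a routine decomposition $a=\Re(a)+i\Im(a)$ together with $|\varphi(\Re(a))|,|\varphi(\Im(a))|\leq|\varphi(a)|$ upgrades this to $\pnorm{K}\leq 2\,p_{\StateSpace(\A|K)}$ on all of $\unital{\A}$. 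Since $\StateSpace(\A|K)$ is a weak* compact subset of $\StateSpace(\A)$, the seminorm $p_{\StateSpace(\A|K)}$ is among those generating the weakly uniform topology, so each $\pnorm{K}$ is weakly uniformly continuous; this gives the inclusion, a fortiori on $\B$.

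\emph{Second}, and this is where boundedness enters, I would show that on $\B$ the weakly uniform topology is coarser than the topographic uniform topology --- equivalently, that each $p_{\mathscr{K}}$ restricted to $\B$ is continuous for the topographic uniform topology. Let $M_\B=\sup\{\|a\|_\A:a\in\B\}$ (if $M_\B=0$ there is nothing to prove). For $d\in\A$ with $\|d\|_\A\leq 2M_\B$ (which covers all differences of elements of $\B$), any state $\varphi$, and any $K\in\compacts{\sigma(\M)}$, decompose $d=\indicator{K}d\indicator{K}+(\unit_{\unital{\A}}-\indicator{K})d\indicator{K}+\indicator{K}d(\unit_{\unital{\A}}-\indicator{K})+(\unit_{\unital{\A}}-\indicator{K})d(\unit_{\unital{\A}}-\indicator{K})$, bound the diagonal term by $\pnorm{K}(d)$ and each of the three off-diagonal terms by $\|d\|_\A\,\varphi(\unit_{\unital{\A}}-\indicator{K})^{1/2}$ via Cauchy--Schwarz, exactly as in the proof of Proposition \ref{obvious-tame-prop}, and take the supremum over $\varphi\in\mathscr{K}$ to obtain
\[
p_{\mathscr{K}}(d)\ \leq\ \pnorm{K}(d)+6\,M_\B\,\sup_{\varphi\in\mathscr{K}}\varphi(\unit_{\unital{\A}}-\indicator{K})^{1/2}\text{.}
\]
Now $\mathscr{K}$, being weak* compact, is tight by Theorem \ref{tight-eq-precompact-thm}, so given $\varepsilon>0$ one may pick $Q\in\compacts{\sigma(\M)}$ making the second summand at most $\tfrac{1}{2}\varepsilon$; then any $a,a_0\in\B$ with $\pnorm{Q}(a-a_0)<\tfrac{1}{2}\varepsilon$ satisfy $p_{\mathscr{K}}(a-a_0)<\varepsilon$, so the trace on $\B$ of the topographic uniform neighborhood $\{a:\pnorm{Q}(a-a_0)<\tfrac{1}{2}\varepsilon\}$ of $a_0$ lies in the weakly uniform neighborhood $\{a:p_{\mathscr{K}}(a-a_0)<\varepsilon\}$. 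Combining the two inclusions yields equality of the relative topologies on $\B$.

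The load-bearing step is this second inclusion, whose crux is absorbing the off-diagonal error terms $\varphi(\unit_{\unital{\A}}-\indicator{K})^{1/2}$: this succeeds precisely because weak* compact sets of states are tight (Theorem \ref{tight-eq-precompact-thm}) \emph{and} because the uniform norm bound on $\B$ holds the factor $\|d\|_\A$ in check. Without the boundedness of $\B$ these terms cannot be controlled --- and the two topologies do genuinely differ there, which is in essence the mechanism behind Counterexample \ref{noncompact-closed-balls-cex}. The first inclusion, by contrast, is a soft consequence of Lemma \ref{pnorm-eq-norm-lem} and the weak* compactness of the faces $\StateSpace(\A|K)$.
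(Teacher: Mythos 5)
Your proof is correct and follows essentially the same route as the paper: the easy inclusion comes from Lemma (\ref{pnorm-eq-norm-lem}), and the substantive inclusion combines tightness of weak* compact sets (Theorem (\ref{tight-eq-precompact-thm})) with a Cauchy--Schwarz estimate controlled by the norm bound on $\B$. The only cosmetic difference is that you run the Cauchy--Schwarz argument directly on the corner decomposition of $a-a_0$, whereas the paper routes it through the localized states $\psi_K$ of Proposition (\ref{local-state-space-dense-prop}); the underlying estimate is the same.
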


\begin{proof}
Let $M = \sup \{ \|a\|_\A : a\in\mathfrak{B}\}$. The weakly uniform topology is stronger than {\tut} by definition. Let $(a_\alpha)_{\alpha \in I}$ be a net in $\B$ converging in $\B$ to $a$ for the {\tu}. Let $\varepsilon \in (0,1)$. Let $\mathscr{K}$ be a weak* compact subset of $\StateSpace(\A)$. By Theorem (\ref{tight-eq-precompact-thm}), $\mathscr{K}$ is tight, so there exists $K\in\compacts{\sigma(\M)}$ such that for all $C\in\compacts{\sigma(\M)}$ with $K\subseteq C$ we have $\sup \{ \varphi(\unit_{\unital{\A}}-\indicator{C}) : \varphi \in \mathscr{K} \} \leq \varepsilon$. On the other hand, for all $\varphi \in \mathscr{K}$, Inequation (\ref{local-state-space-dense-prop-eq0}) in the proof of Proposition (\ref{local-state-space-dense-prop}) gives us, for all $a\in \A$ and $\varphi \in \mathscr{K}$:
\begin{equation*}
|\varphi(a)-\psi_K(a)| \leq  2\varepsilon |\varphi(a)| + 2\varphi(1-\indicator{K})\|a\|_\A \leq 4M\varepsilon \text{,}
\end{equation*}
where $\psi_K : a\in\A\mapsto \varphi(\indicator{K})^{-1}\varphi(\corner{K}{a}) \in \StateSpace(\A|K)$.

Now, by assumption, $(a_\alpha)_{\alpha\in I}$ converges uniformly to $a$ for $\pnorm{K}$. Let $\beta\in I$ such that for all $\beta\succ\alpha$ and for all $\psi \in \StateSpace(\A|K)$ we have $|\psi(a)-\psi(a_\beta)|\leq\varepsilon$. Then for all $\varphi \in \mathscr{K}$ we have $|\varphi(a_\beta)-\varphi(a)| \leq (8M+1)\varepsilon$. Hence $\lim_{\alpha\in I}\sup \{|\varphi(a_\alpha)-\varphi(a)| : \varphi \in \mathscr{K} \} = 0$ as desired.
\end{proof}

We shall see in the Examples sections that, in general, the {\tu} is strictly weaker than the weakly uniform topology.

Now, by definition, a subset $\B$ of $\A$ is totally bounded for the seminorm $\pnorm{K}$ if and only if the set $\indicator{K}\B\indicator{K}$ is totally bounded in the norm of $\A^{\ast\ast}$. With this observation in mind, the central theorem of this paper is the following bridge result, of which our two other characterizations are corollaries.

\begin{theorem}\label{qms-characterization-via-lu-topology-thm}
Let $(\A,\Lip,\M)$ be a Lipschitz triple. The following assertions are equivalent:
\begin{enumerate}
\item $(\A,\Lip,\M)$ is a {\qms},
\item For all $\mu \in \StateSpace(\A|\M)$, the set:
\begin{equation*}
\mulip(\A,\Lip,\mu) = \{ a \in \unital{\A} : \Lip(a)\leq 1 \text{ and } \mu(a) = 0 \}
\end{equation*}
is totally bounded in the $(\A,\M)$-{\tu}, i.e. for all $K \in \compacts{\sigma(\M)}$, the set $\indicator{K}\mulip(\A,\Lip,\mu)\indicator{K}$ is norm totally bounded in $\A^{\ast\ast}$,
\item For some $\mu \in \StateSpace(\A|\M)$ and for all $K \in \compacts{\sigma(\M)}$, the set $\indicator{K}\mulip(\A,\Lip,\mu)\indicator{K}$ is norm totally bounded in $\A^{\ast\ast}$ (i.e. $\mulip(\A,\Lip,\mu)$ is $(\A,\M)$-{\tu}-totally bounded).
\end{enumerate}
\end{theorem}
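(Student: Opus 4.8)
The plan is to prove the cycle $(1)\Rightarrow(2)\Rightarrow(3)\Rightarrow(1)$. The implication $(2)\Rightarrow(3)$ is immediate, since the local state space $\StateSpace(\A|\M)$ is nonempty --- it is even norm dense in $\StateSpace(\A)$ by Proposition \ref{local-state-space-dense-prop}. Throughout, the bridge between the state space and $\A^{\ast\ast}$ is Lemma \ref{pnorm-eq-norm-lem}, which identifies $\pnorm{K}(a)=\|\indicator{K}a\indicator{K}\|_{\A^{\ast\ast}}$ with $\sup\{|\varphi(a)|:\varphi\in\StateSpace(\A|K)\}$, together with Lemma \ref{increasing-norms-lem}, which gives $\pnorm{K}\leq\pnorm{K'}$ whenever $K\subseteq K'$. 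For $(1)\Rightarrow(2)$: fix $\mu\in\StateSpace(\A|\M)$ and $K\in\compacts{\sigma(\M)}$, and choose $K'\supseteq K$ with $\mu\in\StateSpace(\A|K')$. Since $(\A,\Lip,\M)$ is a {\qms} and hence regular, and since $\StateSpace(\A|K')$ is tame (Proposition \ref{obvious-tame-prop}) and weak* compact, $(\StateSpace(\A|K'),\Kantorovich{\Lip})$ is a compact metric space. Each $a\in\mulip(\A,\Lip,\mu)$ yields a weak* continuous function $\varphi\mapsto\varphi(a)$ on $\StateSpace(\A|K')$ that is $1$-Lipschitz for $\Kantorovich{\Lip}$ and bounded by the $\Kantorovich{\Lip}$-diameter (using $\mu(a)=0$). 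By Arzel\`a--Ascoli the family $\{\varphi\mapsto\varphi(a):a\in\mulip(\A,\Lip,\mu)\}$ is totally bounded in $C(\StateSpace(\A|K'))$ for $\|\cdot\|_{\infty}$; since $a\mapsto(\varphi\mapsto\varphi(a))$ is linear and its values have $\|\cdot\|_{\infty}$-norm $\pnorm{K'}(a)$ by Lemma \ref{pnorm-eq-norm-lem}, this says $\mulip(\A,\Lip,\mu)$ is totally bounded for $\pnorm{K'}$, hence for $\pnorm{K}$. As $K$ was arbitrary, this is exactly assertion $(2)$.

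For $(3)\Rightarrow(1)$: total boundedness forces $\sup\{\|\indicator{K}a\indicator{K}\|_{\A^{\ast\ast}}:a\in\mulip(\A,\Lip,\mu)\}<\infty$ for each $K$, so $(\A,\Lip,\M)$ is regular by Proposition \ref{regular-characterization-prop}, and Theorem \ref{regular-lipschitz-tame-thm} then removes any dependence on the chosen local state. It remains to show that for every tame $\mathscr{K}$ the weak* topology and $\Kantorovich{\Lip}$ agree on $\mathscr{K}$; since by Proposition \ref{wasskant-finer-than-weak-prop} the $\Kantorovich{\Lip}$-topology is automatically finer, it suffices to produce, for $\psi\in\mathscr{K}$ and $\varepsilon>0$, a weak* neighbourhood $U$ of $\psi$ in $\mathscr{K}$ on which $\Kantorovich{\Lip}(\psi,\cdot)<\varepsilon$. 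The workhorse is a cutoff $f\in\sa{\M}$ with $0\leq f\leq\unit_{\unital{\A}}$, compact support $L$, and $f\equiv 1$ on a compact set $K$ (Urysohn's Lemma). One first settles the \emph{local} case: if $\varphi,\psi\in\StateSpace(\A|K)$ then $\varphi(a)=\varphi(faf)$ for all $a\in\sa{\unital{\A}}$, because $\varphi(\indicator{K})=1$ forces $\varphi$ to annihilate $(\unit_{\unital{\A}}-\indicator{K})x$ for every $x$ (Cauchy--Schwarz), while $f\indicator{K}=\indicator{K}$; hence by Proposition \ref{wasskant-alt-expression-prop}, $\Kantorovich{\Lip}(\varphi,\psi)=\sup\{|\varphi(faf)-\psi(faf)|:a\in\mulip(\A,\Lip,\mu)\}$. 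Now $\{faf:a\in\mulip(\A,\Lip,\mu)\}\subseteq\A$ is the image of the norm-totally-bounded set $\indicator{L}\mulip(\A,\Lip,\mu)\indicator{L}$ (assertion $(3)$ applied with $L$) under the norm contraction $b\mapsto fbf$, hence is norm totally bounded, so a finite $\A$-net for it carves a weak* neighbourhood lying inside any $\Kantorovich{\Lip}$-ball. This proves that weak* and $\Kantorovich{\Lip}$ coincide on each $\StateSpace(\A|K)$.

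To pass from $\StateSpace(\A|K)$ to a general tame $\mathscr{K}$, the plan is to approximate each $\varphi\in\mathscr{K}$ by its compression $\widehat{\varphi}_{f}:=\varphi(f^{2})^{-1}\varphi(f\,\cdot\,f)\in\StateSpace(\A|L)$, which is a well-defined state once the plateau $K$ is chosen large enough that $\chi(\indicator{K})\geq\tfrac12$ for all $\chi\in\mathscr{K}$ (possible since tame sets are tight, Theorem \ref{tame-implies-tight-thm}), and which depends weak* continuously on $\varphi$ because $f$ and $f^{2}$ lie in $\A$. From the triangle inequality
\begin{equation*}
\Kantorovich{\Lip}(\psi,\varphi)\leq\Kantorovich{\Lip}(\psi,\widehat{\psi}_{f})+\Kantorovich{\Lip}(\widehat{\psi}_{f},\widehat{\varphi}_{f})+\Kantorovich{\Lip}(\widehat{\varphi}_{f},\varphi),
\end{equation*}
the middle term is made $<\varepsilon/3$ on a weak* neighbourhood $U$ of $\psi$ by the local case on $\StateSpace(\A|L)$ together with the weak* continuity of $\varphi\mapsto\widehat{\varphi}_{f}$, and the outer terms are made $<\varepsilon/3$ \emph{uniformly over $\mathscr{K}$} by the Cauchy--Schwarz estimate
\begin{equation*}
\Kantorovich{\Lip}(\varphi,\widehat{\varphi}_{f})\leq\sup_{a\in\mulip(\A,\Lip,\mu),\ \chi\in\mathscr{K}}|\chi(a-\indicator{K}a\indicator{K})|\;+\;C\,r_{L}\Big(\sup_{\chi\in\mathscr{K}}\chi(\unit_{\unital{\A}}-\indicator{K})\Big)^{1/2},
\end{equation*}
where $r_{L}=\sup\{\|\indicator{L}a\indicator{L}\|_{\A^{\ast\ast}}:a\in\mulip(\A,\Lip,\mu)\}<\infty$ comes from regularity. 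Enlarging the plateau $K$ kills the first summand by tameness and the second by tightness, which yields assertion $(1)$.

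The step I expect to be the main obstacle is precisely this last choice of $f$: the cutoff must simultaneously be flat on a set large enough for the tameness term and have escaping mass $\chi(\unit_{\unital{\A}}-\indicator{K})$ small enough for the tightness term, and the subtlety is that enlarging the plateau $K$ also enlarges the support $L$ and hence potentially the ``local radius'' $r_{L}$, so the tameness hypothesis must be strong enough that the escaping mass decays faster than $r_{L}$ grows. This is the noncommutative counterpart of Dobrushin's observation in \cite{Dobrushin70} that, for a Dobrushin-tight family, the tail-integral of the distance dominates the product of a radius with the mass escaping past that radius. Everything else --- Arzel\`a--Ascoli, the identity $\varphi(a)=\varphi(faf)$ on $\StateSpace(\A|K)$, and the total-boundedness bookkeeping --- is routine.
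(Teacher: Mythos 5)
Your implication $(1)\Rightarrow(2)$ is essentially the paper's own argument (Arzel\`a--Ascoli on $\StateSpace(\A|K')$ combined with Lemma \ref{pnorm-eq-norm-lem}), and your ``local case'' of $(3)\Rightarrow(1)$ --- a finite $\pnorm{L}$-net of $\mulip(\A,\Lip,\mu)$ carving a weak* neighbourhood inside a $\Kantorovich{\Lip}$-ball of $\StateSpace(\A|K)$ --- is correct. The problem is the step you yourself flag as the main obstacle: the passage from $\StateSpace(\A|K)$ to a general tame set via the compressions $\widehat{\varphi}_f$. That step does not go through with the estimate you propose, and the difficulty is not merely technical.

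Concretely, your bound $\Kantorovich{\Lip}(\varphi,\widehat{\varphi}_{f})\leq T(K)+C\,r_{L}\sqrt{t(K)}$, where $T(K)$ is the tameness supremum and $t(K)=\sup_{\chi\in\mathscr{K}}\chi(\unit_{\unital{\A}}-\indicator{K})$, cannot be driven to zero in general. Tameness is the noncommutative form of Dobrushin's condition, and Dobrushin's inequality is a Markov/Chebyshev bound $r_{K}\,t(K)\leq T(K)$ --- it controls the \emph{first} power of the escaping mass against the local radius. Your Cauchy--Schwarz step, forced on you by the noncommuting cross terms $\varphi((\unit_{\unital{\A}}-\indicator{K})a\indicator{K})$, produces $\sqrt{t(K)}$ instead, and $r_{L}\sqrt{t(K)}=\sqrt{r_{L}}\cdot\sqrt{r_{L}t(K)}$ need not tend to zero: already for $C_0(\R)$ with the usual metric, a single Dobrushin-tight measure with tail mass $t(r)\sim r^{-3/2}$ has $T(r)\sim r^{-1/2}\to 0$ but $r\sqrt{t(r)}\sim r^{1/4}\to\infty$. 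So the outer terms of your triangle inequality are not uniformly small, and nothing in the definition of a tame set rescues them. (Even the first-power bound $r_K t(K)\leq T(K)$ is a commutative phenomenon; it is not clear it survives noncommutatively.)

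The fix is to abandon the compression of \emph{states} altogether, which is what the paper does: for a net $(\varphi_\alpha)$ in $\mathscr{K}$ weak* converging to $\varphi$, use tameness once to replace every $a\in\mulip(\A,\Lip,\mu)$ by $\indicator{K}a\indicator{K}$ up to $\varepsilon/9$ \emph{uniformly} over $a$ and over $\{\varphi_\alpha,\varphi\}$ --- this is literally the quantity tameness controls, so no Cauchy--Schwarz decomposition and no $\sqrt{t(K)}$ ever appears --- then use the $\pnorm{K}$-total boundedness of $\mulip(\A,\Lip,\mu)$ to pass to a finite net $F$, and finish by weak* convergence on the finitely many elements of $F$. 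Your local case is exactly this argument in the special case where the tameness term vanishes; run the same three-term estimate against a general tame set instead of trying to reduce to the local case.
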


\begin{proof}
Assume that $(\A,\Lip,\M)$ is a {\qms}. Let $\mu$ be a local state. Let $K\in\compacts{\sigma(\M)}$. For any $a\in \sa{\unital{\A}}$ we define:
\begin{equation*}
\Theta_{\StateSpace(\A|K)}(a) : \varphi \in \StateSpace(\A|K) \longmapsto \varphi(a) \text{.}
\end{equation*}
The map $\Theta_{\StateSpace(\A|K)}(a)$ is continuous from the weak* topology on $\StateSpace(\A|K)$ to $\R$ for all $a\in\sa{\unital{\A}}$. Let $C(\StateSpace(\A|K))$ be the real Banach space of real valued weak* continuous functions on $\StateSpace(\A|K)$, with the supremum norm denoted by $\|\cdot\|_{C(\StateSpace(\A|K))}$.

Now, since $(\A,\Lip,\M)$ is a regular Lipschitz triple, the {\mongekant} $\Kantorovich{\Lip}$ restricted to $\StateSpace(\A|K)$ is in fact a metric, and there exists $r\in \R$ such that $\Kantorovich{\Lip}(\varphi,\mu) \leq r$ for all $\varphi \in \mathscr{K}$ by regularity. Since $\mu$ is local, there exists $K'\in\compacts{\sigma(\M)}$ with $K\subseteq K'$ and $\mu\in\StateSpace(\A|K')$, the latter being bounded for $\Kantorovich{\Lip}$. So, for all $a\in \mulip(\A,\Lip,\mu)$ we have:
\begin{align*}
\|\Theta_{\StateSpace(\A|K)}(a)\|_{C(\StateSpace(\A|K))} &= \sup \{ |\varphi(a)| : \varphi \in \StateSpace(\A|K) \}\\
&=  \sup \{ |\varphi(a) - \mu(a)| : \varphi \in \StateSpace(\A|K) \}\\
&\leq r \text{.}
\end{align*}
Since $(\A,\Lip,\M)$ is a {\qms} and $\StateSpace(\A|K)$ is tame by Proposition (\ref{obvious-tame-prop}), the topology induced by $\Kantorovich{\Lip}$ on $\StateSpace(\A|K)$ is the relative topology induced by the weak* topology. On the other hand, by definition of $\Kantorovich{\Lip}$, we have, for all $a\in\mulip(\A,\Lip,\mu)$, that:
\begin{equation*}
|\Theta_{\StateSpace(\A|K)}(a)(\varphi) - \Theta_{\StateSpace(\A|K)}(a)(\psi)| = |\varphi(a)-\psi(a)|\leq\Kantorovich{\Lip}(\varphi,\psi)
\end{equation*}
so $\Theta_{\StateSpace(\A|K)}(a)$ is $1$-Lipschitz on $(\StateSpace(\A|K),\Kantorovich{\Lip})$. Thus, the set:
\begin{equation*}
\Theta_{\StateSpace(\A|K)}(\mulip(\A,\Lip,\M)) = \{ \Theta_{\StateSpace(\A|K)}(a) : a\in\mulip(\A,\Lip,\mu) \}
\end{equation*}
is an equicontinuous, bounded subset of $C(\StateSpace(\A|K))$, and $\StateSpace(\A|K)$ is weak* compact, so by the Arz{\'e}la-Ascoli theorem, $\Theta_{\StateSpace(\A|K)}(\mulip(\A,\Lip,\M))$ is totally bounded for $\|\cdot\|_{C(\StateSpace(\A|K))}$. Now, by definition and Lemma (\ref{pnorm-eq-norm-lem}), we have:
\begin{equation*}
\|\Theta_{\StateSpace(\A|K)}(a)\|_{C(\StateSpace(\A|K))} = \pnorm{K}(a)
\end{equation*}
for all $a\in\unital\A$, so we conclude that $\mulip(\A,\Lip,\M)$ is totally bounded for $\pnorm{K}$ as desired. So $\mulip(\A,\Lip,\mu)$ is {\tut}-totally bounded.

Thus (1) implies (2). Of course, (2) implies (3) trivially. 

Last, assume that $\mulip(\A,\Lip,\mu)$ is {\tut}-totally bounded for some local state $\mu$. First, by Proposition (\ref{regular-characterization-prop}), since for all $K\in\compacts{\sigma(\M)}$ the set $\indicator{K}\mulip(\A,\Lip,\M)\indicator{K}$ is totally bounded in $\A^{\ast\ast}$, hence bounded, in norm, the Lipschitz triple $(\A,\Lip,\M)$ is a regular.

Let $\mathscr{K}$ be a $(\A,\Lip,\M)$-tame subset of $\StateSpace(\A)$.  By Proposition (\ref{wasskant-finer-than-weak-prop}), the {\mongekant} $\Kantorovich{\Lip}$ induces a finer topology on $\mathscr{K}$ than the weak* topology. It is thus sufficient to show that the weak* topology is finer than the metric topology induced by $\Kantorovich{\Lip}$.

Let $(\varphi_\alpha)_{\alpha\in I}$ be a net in $\mathscr{K}$ indexed by a directed set $I$, and weak* convergent to $\varphi$ in $\mathscr{K}$. The set $\Phi = \{ \varphi_\alpha,\varphi : \alpha \in I\}$ is weak* compact by construction and tame, as it is a subset of a tame set.

We start with the key role than tameness plays in this proof. Since $\Phi$ is tame, there exists $K \in \compacts{\sigma(\M)}$ such that:
\begin{equation}
\sup \{ |\psi( a - \indicator{K}a\indicator{K} )| : \psi \in \Phi, a \in \mulip(\A,\Lip,\mu) \} \leq \frac{1}{9}\varepsilon\text{.}
\end{equation}

Since $\mulip(\A,\Lip,\M)$ is totally bounded for $\pnorm{K}$, there exists a finite subset $F\subseteq \mulip(\A,\Lip,\mu)$ such that:
\begin{equation*}
 \forall a\in\mulip(\A,\Lip,\mu) \quad \exists f(a) \in F\quad\quad\pnorm{K}(f(a)-a)\leq\frac{1}{9}\varepsilon\text{.}
\end{equation*}

We then have by definition that:
\begin{equation*}
|\psi(\indicator{K}a\indicator{K})-\psi(\indicator{K} f(a) \indicator{K})| \leq \pnorm{K}(a-f(a))
\end{equation*}
for all $\psi\in\StateSpace(\A)$ and $a\in \sa{\unital{\A}}$. Thus for all $\psi \in \Phi$ and $a\in\mulip(\A,\Lip,\M)$:
\begin{equation}
\begin{split}
|\psi(a) - \psi(f(a))| &\leq |\psi(a)-\psi(\indicator{K}a\indicator{K})| + |\psi(\indicator{K}a\indicator{K})-\psi(\indicator{K} f(a) \indicator{K})|\\ &\quad + |\psi(\indicator{K}f(a)\indicator{K} - f(a))| \leq \frac{1}{3}\varepsilon \text{.}
\end{split}
\end{equation}

Let $\omega \in I$ be chosen so that if $\alpha\succ \omega$ and $b\in F$ then $|\varphi_\alpha(b) - \varphi(b)|\leq \frac{1}{3}\varepsilon$. This is of course possible since $I$ is directed and $F$ is finite. 

Now, for any $a\in\mulip(\A,\Lip,\mu)$ and $\alpha\in I$ with $\alpha\succ\omega$, we have:
\begin{align*}
|\varphi_\alpha(a)-\varphi(a)| &\leq |\varphi_\alpha(a) - \varphi_\alpha(f(a))| + |\varphi_\alpha(f(a)) - \varphi(f(a)) | + |\varphi(f(a))-\varphi(a)| \\
&\leq \frac{1}{3}\varepsilon + \frac{1}{3}\varepsilon + \frac{1}{3}\varepsilon = \varepsilon \text{.}
\end{align*}
Hence $\Kantorovich{\Lip}(\varphi_\alpha,\varphi)\leq \varepsilon$ for $\alpha\succ\omega$, thus proving that the net $(\varphi_\alpha)_{\alpha \in I}$ converges to $\varphi$ in for $\Kantorovich{\Lip}$ as desired.
\end{proof}

\subsection{C*-algebraic characterizations of Topographic Quantum Locally Compact Metric Spaces}

We offer two alternative characterizations for {\qms s}. 

This section first offers a characterization of {\qms s} which avoids the use of projections, which involve working in the enveloping Von Neumann algebra of a C*-algebra and thus may in general be challenging. The second result is a characterization for {\qmss s}. Many examples in practice will be {\qmss s}, and the addition of the separability condition leads to a very nice characterization.

For an Abelian C*-algebra $\M$, the notion of a compactly supported element is well-defined, by identifying $\M$ with the C*-algebra $C_0(\sigma(\M))$ of continuous functions on the spectrum of $\M$ vanishing at infinity. Our main result for this section is:

\begin{theorem}\label{qms-characterization-cc-thm}
Let $(\A,\Lip,\M)$ be a Lipschitz triple. The following assertions are equivalent:
\begin{enumerate}
\item $(A,\Lip,\M)$ is a {\qms},
\item For all $\mu \in \StateSpace(\A|\M)$ and for all \emph{compactly supported} $a,b \in \M$, the set $a \mulip(\A,\Lip,\mu) b$ is totally bounded in the norm topology of $\unital{\A}$,
\item  For some $\mu \in \StateSpace(\A|\M)$ and for all \emph{compactly supported} $a,b \in \M$, the set $a \mulip(\A,\Lip,\mu) b$ is totally bounded in the norm topology of $\unital{\A}$.
\end{enumerate}
\end{theorem}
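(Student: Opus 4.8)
The plan is to reduce this statement to the already-established Theorem \ref{qms-characterization-via-lu-topology-thm}, which characterizes {\qms s} among Lipschitz triples by the norm total boundedness in $\A^{\ast\ast}$ of the sets $\indicator{K}\mulip(\A,\Lip,\mu)\indicator{K}$ for $K\in\compacts{\sigma(\M)}$. Thus it suffices to show that, for a fixed $\mu\in\StateSpace(\A|\M)$, the condition ``$\indicator{K}\mulip(\A,\Lip,\mu)\indicator{K}$ is norm totally bounded in $\A^{\ast\ast}$ for all compact $K$'' is equivalent to ``$a\mulip(\A,\Lip,\mu)b$ is norm totally bounded for all compactly supported $a,b\in\M$''. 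The two elementary facts driving this are: (i) if $a\in\M$ has support contained in a compact $K\subseteq\sigma(\M)$, then $a=a\indicator{K}=\indicator{K}a$ (this is where the abelian structure of $\M$ enters); and (ii) $\indicator{K}$ is a projection of norm at most $1$ in $\A^{\ast\ast}$, so conjugation by it does not increase norms.

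For $(1)\Rightarrow(2)$, I would invoke Theorem \ref{qms-characterization-via-lu-topology-thm}, condition (2), to get, for every local state $\mu$ and every compact $K$, that $\indicator{K}\mulip(\A,\Lip,\mu)\indicator{K}$ is norm totally bounded in $\A^{\ast\ast}$. Then, given compactly supported $a,b\in\M$, I would pick a compact $K$ containing both supports and use $acb=a(\indicator{K}c\indicator{K})b$ together with submultiplicativity to get
\begin{equation*}
\|acb-ac'b\|_\A\leq\|a\|_\A\,\|b\|_\A\,\|\indicator{K}c\indicator{K}-\indicator{K}c'\indicator{K}\|_{\A^{\ast\ast}}
\end{equation*}
for $c,c'\in\mulip(\A,\Lip,\mu)$, so that the image under $c\mapsto acb$ of a finite net for $\indicator{K}\mulip(\A,\Lip,\mu)\indicator{K}$ is a finite net for $a\mulip(\A,\Lip,\mu)b$ (which lies in $\A$, since $\A$ is an ideal in $\unital\A$). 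The implication $(2)\Rightarrow(3)$ is immediate.

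For $(3)\Rightarrow(1)$, I would fix the $\mu$ from $(3)$ and an arbitrary compact $K$, and use Urysohn's Lemma for locally compact Hausdorff spaces (as in Lemma \ref{topographic-approx-identity-lem}) to produce a compactly supported $f\in\sa{\M}$ with $0\leq f\leq\unit_{\unital{\A}}$ and $f\equiv 1$ on $K$; then $\indicator{K}f=f\indicator{K}=\indicator{K}$, so $\indicator{K}c\indicator{K}=\indicator{K}(fcf)\indicator{K}$ and
\begin{equation*}
\|\indicator{K}c\indicator{K}-\indicator{K}c'\indicator{K}\|_{\A^{\ast\ast}}\leq\|fcf-fc'f\|_\A
\end{equation*}
for all $c,c'\in\mulip(\A,\Lip,\mu)$. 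Applying $(3)$ with $a=b=f$ gives that $f\mulip(\A,\Lip,\mu)f$ is norm totally bounded, and pushing a finite net forward through $c\mapsto\indicator{K}c\indicator{K}$ shows $\indicator{K}\mulip(\A,\Lip,\mu)\indicator{K}$ is norm totally bounded in $\A^{\ast\ast}$. As $K$ was arbitrary, condition $(3)$ of Theorem \ref{qms-characterization-via-lu-topology-thm} applies and $(\A,\Lip,\M)$ is a {\qms}.

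I do not expect a serious obstacle here: the argument is a transfer between the projection $\indicator{K}$ and genuine algebra elements, and the only points requiring care are bookkeeping ones --- keeping track of which norm ($\A$ versus $\A^{\ast\ast}$) is in play, noting that $acb$ and $fcf$ actually belong to $\A$, where the two norms coincide, while $\indicator{K}c\indicator{K}$ only lives in $\A^{\ast\ast}$, and checking that the Urysohn function needed in $(3)\Rightarrow(1)$ is exactly of the kind guaranteed by the local compactness of $\sigma(\M)$.
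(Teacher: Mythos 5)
Your proposal is correct and follows essentially the same route as the paper's own proof: both directions reduce to Theorem (\ref{qms-characterization-via-lu-topology-thm}) via the identity $acb = a\indicator{K}c\indicator{K}b$ for $K$ containing the supports, and the converse uses the same Urysohn function $f$ with $\indicator{K}f=\indicator{K}$ to bound $\|\indicator{K}c\indicator{K}\|_{\A^{\ast\ast}}$ by $\|fcf\|_{\unital{\A}}$. The only difference is cosmetic: you spell out the transfer of finite nets through the linear maps $c\mapsto acb$ and $c\mapsto\indicator{K}c\indicator{K}$, which the paper leaves as an easy deduction.
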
   

\begin{proof}
Assume first that $(\A,\Lip,\M)$ is a {\qms}. By Theorem (\ref{qms-characterization-via-lu-topology-thm}),  $\indicator{K}\mulip(\A,\Lip,\mu) \indicator{K}$ is totally bounded in norm for all $K \in \compacts{\sigma(\M)}$ and all local states $\mu$. Let $a,b \in \M$ with compact support. Then there exists $K \in \compacts{\sigma(\M)}$ such that $a \indicator{K} = a$ and $\indicator{K} b = b$. Hence for all $c \in \unital{\A}$:
\begin{equation*}
\|acb\|_\A \leq \|a\indicator{K}c\indicator{K}b\|_{\A^{\ast\ast}} \leq \|a\|_\A\|b\|_\B \|\indicator{K}c\indicator{K}\|_{\A^{\ast\ast}} \text{.}
\end{equation*}
Hence, for all local states $\mu$, since $\indicator{K}\mulip(\A,\Lip,\mu)\indicator{K}$ is totally bounded in $\|\cdot\|_{\A^{\ast\ast}}$, so is $a \mulip(\A,\Lip,\mu) b$. As $a \mulip(\A,\Lip,\mu) b \subseteq \unital{\A}$, and $\|\cdot\|_{\unital{\A}}$ equals to the restriction of $\|\cdot\|_{\A^{\ast\ast}}$ to $\unital{\A}$, we have proven that (1) implies (2).

The second assertion obviously implies the third.

Assume now that for some $\mu \in \StateSpace(\A|\M)$ and for all \emph{compactly supported} $a,b \in \M$, the set $a \mulip(\A,\Lip,\mu) b$ is totally bounded in the norm topology of $\unital{\A}$.

Let $K\in\compacts{\sigma(\M)}$. There exists $c \in \sa{\M}$ compactly supported such that $\indicator{K}c = \indicator{K}$ by Urysohn's lemma for locally compact Hausdorff spaces. Hence, for all $a\in\mulip(\A,\Lip,\mu)$ we have:
\begin{equation*}
\|\indicator{K}a\indicator{K}\|_{\A^{\ast\ast}} = \|\indicator{K}cac\indicator{K}\|_{\A^{\ast\ast}} \leq \|cac\|_{\unital{\A}} \text{.}
\end{equation*}
We thus easily deduce that, since $c\mulip(\A,\Lip,\mu) c$ is totally bounded for $\|\cdot\|_{\unital{\A}}$, the set $\indicator{K}\mulip(\A,\Lip,\mu)\indicator{K}$ is totally bounded for $\|\cdot\|_{\A^{\ast\ast}}$. By Theorem (\ref{qms-characterization-via-lu-topology-thm}), we conclude that $(\A,\Lip,\M)$ is a {\qms}. This proves our theorem.
\end{proof}

We now turn to the important special case of {\qmss s}. We note that by assumption, if $(\A,\M)$ is a topographic quantum space and $h\in \M$ is a strictly positive element in $\M$ then it is also a strictly positive element in $\A$, and conversely if $h\in\M$ is strictly positive for $\A$ then it is so as well in $\M$. This follows from Proposition (\ref{restriction-prop}), for instance.

\begin{theorem}\label{qms-characterization-separable-thm}
Let $(\A,\Lip,\M)$ be a Lipschitz triple where $\A$ is separable. The following assertions are equivalent:
\begin{enumerate}
\item  $(\A,\Lip,\M)$ is a {\qmss},
\item There exists a strictly positive element $h \in \M$ such that for all $\mu \in \StateSpace(\A|\M)$, the set $h\mulip(\A,\Lip,\mu) h$ is totally bounded for $\|\cdot\|_{\unital{\A}}$,
\item There exists a strictly positive element $h \in \M$ and a local state $\mu$ such that $h\mulip(\A,\Lip,\mu) h$ is totally bounded for $\|\cdot\|_{\unital{\A}}$.
\end{enumerate}
\end{theorem}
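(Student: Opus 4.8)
The plan is to derive this theorem from the two characterizations already in hand, chiefly Theorem~\ref{qms-characterization-cc-thm} (which controls total boundedness of $a\,\mulip(\A,\Lip,\mu)\,b$ for compactly supported $a,b\in\M$) and Theorem~\ref{qms-characterization-via-lu-topology-thm} (which controls $\indicator{K}\mulip(\A,\Lip,\mu)\indicator{K}$). Since the implication $(2)\Rightarrow(3)$ is trivial, it suffices to prove $(1)\Rightarrow(2)$ and $(3)\Rightarrow(1)$. The genuine difficulty lies entirely in $(1)\Rightarrow(2)$: we must exhibit \emph{one} strictly positive $h\in\M$ that works for \emph{every} local state at once, and a generic strictly positive element will not do, because $\mulip(\A,\Lip,\mu)$ need not be norm bounded and $h$ has to decay fast enough to compensate. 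This is precisely where separability (equivalently, $\sigma$-compactness of $\sigma(\M)$) is indispensable --- otherwise $\M$ need not even possess a strictly positive element.

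For $(3)\Rightarrow(1)$, suppose $h\in\M$ is strictly positive and $\mu$ is a local state with $h\,\mulip(\A,\Lip,\mu)\,h$ norm totally bounded. Fix $K\in\compacts{\sigma(\M)}$. Since $h$ is continuous and strictly positive, it is bounded below by some $\delta>0$ on $K$, so Urysohn's lemma for locally compact Hausdorff spaces yields a compactly supported $f\in\sa{\M}$ with $0\le f$ and $fh=\unit_{\unital{\A}}$ on $K$; hence $\indicator{K}fh=hf\indicator{K}=\indicator{K}$ in $\A^{\ast\ast}$, so $\indicator{K}a\indicator{K}=\indicator{K}f(hah)f\indicator{K}$ for all $a\in\unital{\A}$. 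The map $b\mapsto \indicator{K}fbf\indicator{K}$ is bounded linear from $\unital{\A}$ into $\A^{\ast\ast}$, hence uniformly continuous, so it sends the totally bounded set $h\,\mulip(\A,\Lip,\mu)\,h$ onto the totally bounded set $\indicator{K}\mulip(\A,\Lip,\mu)\indicator{K}$. As $K$ was arbitrary, Theorem~\ref{qms-characterization-via-lu-topology-thm} gives that $(\A,\Lip,\M)$ is a {\qms}, and separability of $\A$ upgrades this to a {\qmss}.

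For $(1)\Rightarrow(2)$, assume $(\A,\Lip,\M)$ is a {\qmss}; it is then regular, and $\M$ is separable, so $\sigma(\M)$ is a second countable locally compact Hausdorff space, hence $\sigma$-compact. Fix an exhaustion $K_1\subseteq\operatorname{int}K_2\subseteq K_2\subseteq\cdots$ by compacts with $\bigcup_n K_n=\sigma(\M)$. By regularity the numbers $D_n:=\sup\{\Kantorovich{\Lip}(\varphi,\psi):\varphi,\psi\in\StateSpace(\A|K_n)\}$ are finite and nondecreasing in $n$. Using Urysohn's lemma, build a partition of unity $(\phi_n)_{n\in\N}\subseteq\sa{\M}$ with $0\le\phi_n\le\unit_{\unital{\A}}$, $\operatorname{supp}\phi_n\subseteq K_{n+1}$ and $\sum_n\phi_n=\unit_{\unital{\A}}$ pointwise on $\sigma(\M)$ (for instance $\phi_n=\psi_n-\psi_{n-1}$ for Urysohn functions $\psi_n$ equal to $1$ on $K_n$ and supported in $\operatorname{int}K_{n+1}$, with $\psi_0=0$). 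Put $c_n:=2^{-n}(1+D_{n+1})^{-1}$ and $h:=\sum_n c_n\phi_n$; the series converges in norm in $\M$, and $h$ is strictly positive because at every point of $\sigma(\M)$ some $\phi_n$ is positive.

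It remains to verify that this $h$ works. Fix a local state $\mu$, say $\mu\in\StateSpace(\A|K_N)$, and let $a\in\mulip(\A,\Lip,\mu)$. For indices $m,n$ set $j:=\max\{m+1,n+1,N\}$. Then $\operatorname{supp}\phi_m,\operatorname{supp}\phi_n\subseteq K_j$ and $K_N\subseteq K_j$, so $\phi_m a\phi_n=\phi_m\indicator{K_j}a\indicator{K_j}\phi_n$, and combining Lemma~\ref{pnorm-eq-norm-lem} with Proposition~\ref{wasskant-alt-expression-prop} (and $\mu(a)=0$, $\mu\in\StateSpace(\A|K_j)$) gives $\|\indicator{K_j}a\indicator{K_j}\|_{\A^{\ast\ast}}=\sup\{|\varphi(a)|:\varphi\in\StateSpace(\A|K_j)\}\le D_j$, whence $\|\phi_m a\phi_n\|_{\A^{\ast\ast}}\le D_{\max\{m+1,n+1,N\}}$; moreover $\phi_m\,\mulip(\A,\Lip,\mu)\,\phi_n$ is norm totally bounded by Theorem~\ref{qms-characterization-cc-thm} since $\phi_m,\phi_n$ are compactly supported. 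Now $hah=\sum_{m,n}c_m c_n\phi_m a\phi_n$, and since $D$ is nondecreasing, $\sum_{\max\{m,n\}>M}c_m c_n\|\phi_m a\phi_n\|\le 2\bigl(\sum_k c_k\bigr)\bigl(\sum_{k>M}c_k D_{k+1}\bigr)$ plus finitely many fixed terms bounded by $D_N\bigl(\sum_k c_k\bigr)^2$ when $M\ge N$; by the choice of $c_n$ this tends to $0$ as $M\to\infty$, uniformly in $a$. Hence $h\,\mulip(\A,\Lip,\mu)\,h$ lies in an arbitrarily small norm neighbourhood of the finite Minkowski sum $\sum_{m,n\le M}c_m c_n\,\phi_m\,\mulip(\A,\Lip,\mu)\,\phi_n$, which is totally bounded as a finite sum of scaled totally bounded sets; therefore $h\,\mulip(\A,\Lip,\mu)\,h$ is totally bounded in $\unital{\A}$, as required. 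The main obstacle is exactly this last construction: balancing the decay of $h$, encoded in the weights $c_n$, against the growth of the diameters $D_n$, and checking that the tail estimate is uniform in $a\in\mulip(\A,\Lip,\mu)$.
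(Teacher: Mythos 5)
Your proof is correct, and the interesting implication $(1)\Rightarrow(2)$ is handled by a genuinely different construction than the paper's. The paper builds a telescoping sequence $(c_n)_{n\in\N}$ of Urysohn functions satisfying $c_nc_m=c_n$ for $n\le m$, and weights them by $x_n=2^{-n}(\max\{R_n,1\})^{-1}$, where $R_n$ is a norm bound on the totally bounded set $c_n\,\mulip(\A,\Lip,\mu)\,c_n$; since $R_n$ depends on the chosen local state $\mu$, so does the resulting $h$, and the paper must then run a second argument (translating a sequence in $\mulip(\A,\Lip,\nu)$ by $\mu(a_n)\unit_{\unital{\A}}$, using $\Kantorovich{\Lip}(\mu,\nu)<\infty$ to control the scalars, and extracting subsequences twice) to show that this one $h$ in fact works for every local state. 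You instead use a partition of unity and calibrate the weights $c_n=2^{-n}(1+D_{n+1})^{-1}$ against the diameters $D_n$ of the slices $\StateSpace(\A|K_n)$, which are finite by regularity alone; this makes $h$ manifestly independent of $\mu$ and eliminates the second stage entirely, at the modest cost of a double-indexed tail estimate in place of the paper's single-indexed one (which relies on the nesting relation $c_nc_m=c_n$ to collapse the double sum). Your $(3)\Rightarrow(1)$ is the same ``divide by $h$ on a compact set'' idea as the paper's, except that you pass directly to the seminorms $\pnorm{K}$ and Theorem (\ref{qms-characterization-via-lu-topology-thm}) rather than routing through the compactly supported elements of Theorem (\ref{qms-characterization-cc-thm}); both are equally valid. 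The only stylistic quibble is the slightly garbled sentence about ``finitely many fixed terms bounded by $D_N(\sum_k c_k)^2$'': when $M\ge N$ and $\max\{m,n\}>M$ one always has $\max\{m+1,n+1,N\}=\max\{m,n\}+1$, so the tail is already fully controlled by $2\bigl(\sum_k c_k\bigr)\sum_{k>M}c_kD_{k+1}\le 2^{-M+2}$ and no extra terms are needed; but this does not affect correctness.
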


\begin{proof}
Assume that $(\A,\Lip,\M)$ is a {\qmss}. Fix a local state $\mu$.

Since $\A$ is separable, so is $\M$, so $\sigma(\M)$ is a separable locally compact Hausdorff space. Let $(K_n)_{n\in\N}$ be an increasing sequence of compact subsets of $\sigma(\M)$ such that $\bigcup_{n\in\N} K_n = \sigma(\M)$ and $K_n \subseteq (\operatorname{int} K_{n+1})$ for all $n\in\N$ where $(\operatorname{int} T)$ is the topological interior of any subset $T$ of $\sigma(\M)$. For instance, since there exists $f \in \M$ strictly positive as $\M$ separable \cite{Pedersen79}, we could choose $K_n = f^{-1} \left(\left[\frac{1}{n+1},\|f\|_\M\right]\right) $ for all $n\in \N$. Since $f\in C_0(\sigma(\M))$ and $f \geq 0$, the set $K_n$ is compact for all $n\in\N$.  We also note that $K_n \subseteq f^{-1}\left(\left(\frac{1}{n+2},\|f\|_\M\right]\right)\subseteq \operatorname{int}K_{n+1} $ for all $n\in\N$. Last, if $x\in \sigma(\M)$ then $f(x) > \frac{1}{n+1}$ for some $n\in\N$ since $f$ is strictly positive, so $x\in K_{n}$.

Let $p_n = \indicator{K_n}$ and $c_n\in \sa{\M}$ be such that $c_np_n = p_n$ and $c_n (1-p_m) = 0$ while $\|c_n\|_{\unital{\A}} = 1$ for all $n,m\in\N$ with $m>n$. Such a construction is done simply by induction and using the Urysohn's Lemma for locally compact spaces. Note that by construction, $c_n$ is compactly supported in $\M$ for all $n\in\N$

We now select a sequence $(x_n)_{n\in\N}$ of nonnegative real numbers as follows. For $n\in \N$, the set $c_n \mulip(\A,\Lip,\mu) c_n$ is totally bounded in $\|\cdot\|_{\unital{\A}}$ by Theorem (\ref{qms-characterization-cc-thm}), hence bounded by some $R_n \in \R$. Let $x_n = 2^{-n} (\max\{R_n, 1\})^{-1}$. 

Since $(\sum x_n c_n)_{n\in\N}$ is absolutely summable by construction, we can define \begin{equation*}
h = \sum_{n=0}^\infty x_n c_n \in \M\text{.}
\end{equation*} 
It is easy to check that by construction, $(c_n)_{n\in\N}$ is an approximate unit of $\M$, so the element $h$ is strictly positive in $\M$, and hence in $\A$ by Proposition (\ref{restriction-prop}).

Now, since $(c_n)_{n\in\N}$ satisfies the relations $c_nc_m = c_n$ if $n\leq m$ for all $n,m\in\N$, we note that for all $a\in\unital{\A},n,m\in\N$, if $n\leq m$ then:
\begin{equation*}
\|c_n a c_m\|_{\unital{\A}} = \|c_n c_m a c_m\|_{\unital{\A}} \leq \|c_m a c_m\|_{\unital{\A}}
\end{equation*}
and similarly $\|c_m a c_n\|_{\unital{\A}}\leq \|c_ma c_m\|_{\unital{\A}}$. Also, by construction, we have $\sum_{n=0}^\infty x_n \leq 1$. Hence, for all $a \in \mulip(\A,\Lip,\mu)$, we have:
\begin{align}
\|hah\|_{\unital{\A}} &\leq \sum_{n=0}^\infty\sum_{m=0}^\infty x_nx_m \|c_n a c_m\|_{\unital{\A}} \\
&\leq \sum_{n=0}^\infty (\sum_{m=0}^n x_m) x_n \|c_n a c_n\|_{\unital{\A}} + \sum_{n=0}^\infty \sum_{m=n+1}^\infty x_m x_n \|c_m a c_m\|_{\unital{\A}}\\
&\leq \sum_{n=0}^\infty x_n \|c_n a c_n\|_{\unital{\A}} + \sum_{m=1}^\infty (\sum_{n=0}^m x_n) x_m \|c_m a c_m\|_{\unital{\A}}\\
&\leq 2 \sum_{n=0}^\infty x_n\|c_n a c_n\|_{\unital{\A}} \text{.}
\end{align}

Let $\varepsilon > 0$. Let $N\in \N$ such that $\sum_{n=N+1}^\infty 2^{-n} \leq \frac{1}{2} \varepsilon$. By the choice of $(x_n)_{n\in\N}$, for all $a\in \mulip(\A,\Lip,\mu)$ we have: 
\begin{equation*}
\sum_{n=N+1}^\infty x_n \|c_n a c_n\|_{\unital{\A}} \leq \sum_{n=N+1}^\infty 2^{-n} \leq \frac{1}{2} \varepsilon\text{.}
\end{equation*} 

On the other hand, since $c_Nc_n = c_n$ for all $n\leq N$, if we set $M = \sum_{n=0}^N x_n$ then we have for all $a\in \unital{\A}$:
\begin{equation*}
\sum_{n=0}^N x_n \|c_n a c_n\|_{\unital{\A}} \leq M \|c_N a c_N \|_{\unital{\A}}\text{.}
\end{equation*}

Since $(\A,\Lip,\M)$ is a {\qms}, by Theorem (\ref{qms-characterization-cc-thm}). the set $c_N \mulip(\A,\Lip,\mu) c_N$ is totally bounded for $\|\cdot\|_{\unital{\A}}$. Therefore, there exists a finite subset $F$ of $\mulip(\A,\Lip,\mu)$ such that for all $a\in \mulip(\A,\Lip,\M)$ there exists $f(a)\in\mulip(\A,\Lip,\mu)$ with $\|c_N (a-f(a))c_N\|_{\unital{\A}} \leq \frac{1}{2M} \varepsilon$.

Thus:
\begin{equation*}
\|h(a-f(a))h\|_{\unital{\A}} \leq \frac{1}{2}\varepsilon + M\|c_N(a-f(a))c_N\|_{\unital{\A}} \leq\varepsilon \text{.}
\end{equation*}

Thus $h\mulip(\A,\Lip,\mu)h $ is totally bounded for the topology of the norm $\|\cdot\|_{\unital{\A}}$.

Our construction of $h$ depends on the choice of the local state $\mu$: namely, we have thus far shown that for all local state $\mu$ of $(\A,\M)$, there exists a strictly positive element $h$ with the desired property. We now prove that in fact, the element $h$ constructed above for a given local state $\mu$ satisfies that $h \mulip(\A,\Lip,\nu) h$ is totally bounded in norm for any local state $\nu$ of $(\A,\M)$. 

Let $\nu \in \StateSpace(\A|\M)$. Let $(a_n)_{n\in\N}$ be a sequence in $\mulip(\A,\Lip,\nu)$. By definition, $(a_n - \mu(a_n) \unit_{\unital{\A}})_{n\in\N}$ is a sequence in $\mulip(\A,\Lip,\mu)$. Since $h\mulip(\A,\Lip,\mu)h$ is totally bounded for $\|\cdot\|_{\unital{\A}}$ and $\unital{\A}$ is a complete metric space for the distance induced by $\|\cdot\|_{\unital{\A}}$, there exists a subsequence $(a_{\gamma(n)})_{n\in\N}$ of $(a_n)_{n\in\N}$ such that:
\begin{equation*}
\left(h \left( a_{\gamma(n)} - \mu\left(a_{\gamma(n)}\right) \right) h\right)_{n\in\N}
\end{equation*}
converges in norm. Now, since $\nu,\mu$ are local states, we have $\Kantorovich{\Lip}(\mu,\nu) < \infty$. Thus, for all $n\in\N$, since we have $\nu(a_n)=0$, we conclude, by definition of $\Kantorovich{\Lip}$, that $\mu(a_n) \leq \Kantorovich{\Lip}(\mu,\nu)$. Thus $(\mu(a_{\gamma(n)})_{n\in\N}$ is a bounded sequence in $\R$. Consequently, there exists a convergent subsequence $(\mu(a_{\gamma\circ\delta(n)})_{n\in\N}$ of $(\mu(a_{\gamma(n)}))_{n\in\N}$. Hence, the sequence $(ha_{\gamma\circ\delta(n)}h)_{n\in\N}$ converges in norm in $\unital{\A}$. Therefore, $h\mulip(\A,\Lip,\nu)h$ has compact closure for $\|\cdot\|_{\unital{\A}}$, i.e. $h\mulip(\A,\Lip,\nu)h$ is totally bounded for $\|\cdot\|_{\unital{\A}}$.

The second assertion implies the third trivially.

Assume now that there exists a strictly positive $h \in \M$ and some local state $\mu\in\StateSpace(\A|\M)$ such that $h\mulip(\A,\Lip,\mu) h$ is totally bounded for $\|\cdot\|_{\unital{\A}}$. Let $a,b$ be two compactly supported elements in $\M$. Since $h$ is strictly positive in $\M$ and $a$ and $b$ are compactly supported, there exists $s,t \in \M$ such that $sh = a$ and $ht = b$. To be specific, and illustrate why it matters that $a$ (and $b$) are compactly supported, note that $h$ is bounded below on the support of $a$, since the latter is compact and $h$ is continuous. As $h$ is strictly positive, there exists $r>0$ such that $r \leq h$ on the support of $a$. Thus we can define a \emph{bounded} element $s$ by $s : x \in \sigma(\M)\mapsto \frac{a(x)}{h(x)} \leq \frac{\|a\|_{\unital{\A}}}{r} <\infty$ . Of course, $s$ is also compactly supported, with the same support as $a$. If $a$ was not compactly supported, the division may lead to an unbounded element. 

 We then have for all $c\in\mulip(\A,\Lip,\M)$:
\begin{equation}
\|acb\|_{\unital{\A}} = \|sh c ht\|_{\unital{\A}} \leq \|s\|_{\unital{\A}}\|t\|_{\unital{\A}} \|hch\|_{\unital{\A}} \text{.}
\end{equation}

Hence, since $h\mulip(\A,\Lip,\M)h$ is totally bounded in $\|\cdot\|_{\unital{\A}}$, so is $a\mulip(\A,\Lip,\M)b$. Thus by Theorem (\ref{qms-characterization-cc-thm}), the Lipschitz triple $(\A,\Lip,\M)$ is a {\qmss}.
\end{proof}

This completes our foundations for {\qms s} theory. We turn our attention to some examples.

\section{Examples}

We choose to illustrate our results with some fundamental examples. We start by casting the notion of a locally compact metric space in the framework of {\qms s}, of course. We then show that Rieffel's notion of compact quantum metric spaces and our previous notion of separable bounded quantum locally compact metric spaces both are special cases of our approach. We then give a first, simple example of metric on the algebra of compact operators of a separable Hilbert space, which shows some of the noncommutative phenomena which led us to our approach. 

We then conclude with the main example of this section, which is the Moyal plane with a natural spectral triple introduced in \cite{Varilly04}. This shows a very natural example of an unbounded {\qms} on a simple C*-algebra.

\subsection{Locally compact metric spaces}
Our first example shows how the concept of a locally compact metric space fits within the framework of {\qms}.

\begin{theorem}\label{classical-lcms-thm}
Let $(X,m)$ be a locally compact metric space and let $C_0(X)$ be the C*-algebra of complex valued continuous functions on $X$ vanishing at infinity. Let $\mathsf{Lip}_m$ be the Lipschitz seminorm:
\begin{equation*}
\mathsf{Lip}_m : f \in C_0 \longmapsto \sup \left\{ \frac{|f(y)-f(x)|}{m(y,x)} : x,y \in X \text{ and } x\not= y\right\} \text{.}
\end{equation*}
The Lipschitz triple $(C_0(X),\mathsf{Lip}_m,C_0(X))$ is a {\qms}. 
\end{theorem}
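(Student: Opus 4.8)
The plan is to verify first that $(C_0(X),\mathsf{Lip}_m,C_0(X))$ is genuinely a Lipschitz triple, and then to check condition (3) of Theorem~\ref{qms-characterization-cc-thm}. Since $C_0(X)$ is Abelian and is an approximate identity for itself, $(C_0(X),C_0(X))$ is a topographic quantum space with $\sigma(C_0(X)) = X$. That $\mathsf{Lip}_m$ is a densely defined seminorm on $\sa{\unital{C_0(X)}}$ vanishing exactly on $\R\unit_{\unital{C_0(X)}}$ is classical: $\mathsf{Lip}_m(f)=0$ forces $f$ to be constant, and for density it suffices to approximate a nonnegative $g \in C_c(X)$ (such functions span a dense subspace of $\sa{C_0(X)}$, and constants are already $1$-Lipschitz) by its inf-convolutions $g_\lambda : x \mapsto \inf_{y\in X}\big(g(y)+\lambda m(x,y)\big)$, which are $\lambda$-Lipschitz, are supported in $\operatorname{supp} g$, and converge uniformly to $g$ as $\lambda\to\infty$ since $g$ is uniformly continuous. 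Hence $(C_0(X),\mathsf{Lip}_m,C_0(X))$ is a Lipschitz triple.

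Next I would fix a point $x_0\in X$ and take $\mu=\delta_{x_0}$, the Dirac state at $x_0$: as $\{x_0\}\in\compacts{X}$ and $\delta_{x_0}(\indicator{\{x_0\}})=1$, we have $\delta_{x_0}\in\StateSpace(C_0(X)\,|\,\{x_0\})\subseteq\StateSpace(C_0(X)\,|\,C_0(X))$, so $\mu$ is a local state. Under the usual identifications, $\sa{\unital{C_0(X)}}$ is the space of real-valued continuous functions on $X$ with a finite limit at infinity, and
\[
\mulip(C_0(X),\mathsf{Lip}_m,\delta_{x_0}) = \{\, f : X \to \R \ :\ \mathsf{Lip}_m(f)\leq 1,\ f(x_0)=0,\ f \text{ has a finite limit at infinity} \,\}\text{.}
\]
Let $a,b\in C_0(X)$ be compactly supported and choose $K\in\compacts{X}$ with $x_0\in K$ and $\operatorname{supp}a\cup\operatorname{supp}b\subseteq K$. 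For every $f\in\mulip(C_0(X),\mathsf{Lip}_m,\delta_{x_0})$ and every $x\in K$ we have $|f(x)| = |f(x)-f(x_0)|\leq m(x,x_0)\leq\operatorname{diam}_m(K)<\infty$, so the restrictions $\{f|_K\}$ form a uniformly bounded, equi-Lipschitz — hence equicontinuous — family in $C(K)$, which is therefore relatively compact in $C(K)$ for the supremum norm by the Arzel\`a-Ascoli theorem. Since $afb$ is supported in $K$, its norm in $\unital{C_0(X)}$ equals $\|(ab)|_K\cdot f|_K\|_{C(K)}$, and multiplication by the fixed bounded function $(ab)|_K$ is a bounded linear operator on $C(K)$; it thus carries the relatively compact family $\{f|_K\}$ to a relatively compact, in particular totally bounded, subset of $C(K)$. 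Consequently $a\,\mulip(C_0(X),\mathsf{Lip}_m,\delta_{x_0})\,b$ is totally bounded in $\unital{C_0(X)}$, so condition (3) of Theorem~\ref{qms-characterization-cc-thm} holds and $(C_0(X),\mathsf{Lip}_m,C_0(X))$ is a {\qms}.

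I do not expect a genuine obstacle here, as the argument is entirely classical; the only delicate points are bookkeeping ones. One must describe $\sa{\unital{C_0(X)}}$ correctly as continuous functions with a \emph{limit} at infinity (not merely bounded ones), so that the supremum defining $\mulip$ is taken over the right set; one must confirm that $\delta_{x_0}$ is indeed a local state; and one should note that regularity never needs separate checking, being automatic from Theorem~\ref{qms-characterization-cc-thm} through Proposition~\ref{regular-characterization-prop}. It is also worth recording, though not needed for the statement, that $\Kantorovich{\mathsf{Lip}_m}$ restricted to the Dirac measures recovers $m$, which is immediate from the definition of $\mathsf{Lip}_m$ together with the fact that $y\mapsto m(y,x_0)$ is $1$-Lipschitz and vanishes at $x_0$.
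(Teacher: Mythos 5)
Your proposal is correct and follows essentially the same route as the paper: fix the Dirac local state $\delta_{x_0}$, observe that elements of $\mulip(C_0(X),\mathsf{Lip}_m,\delta_{x_0})$ are uniformly bounded and equi-Lipschitz on any compact $K$ containing $x_0$, and apply the Arzel\`a-Ascoli theorem. The only (cosmetic) difference is that you invoke Theorem~(\ref{qms-characterization-cc-thm}) with compactly supported $a,b$ while the paper applies Theorem~(\ref{qms-characterization-via-lu-topology-thm}) directly to $\indicator{K}\mulip(C_0(X),\mathsf{Lip}_m,\delta_{x_0})\indicator{K}$; your extra verification of the density of the Lipschitz domain via inf-convolutions is a welcome detail the paper leaves implicit.
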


\begin{proof}
Let $x_0 \in X$ be any point. The Dirac probability measure $\delta_{x_0}$ is supported on the compact $\{x_0\}$, so it is a local state. Let:
\begin{equation*}
\mathfrak{L} = \mulip(C_0(X),\mathsf{Lip}_m,\delta_{x_0}) = \{ f \in \unital{C_0(X)} : \mathsf{Lip}_m(f)\leq 1 \text{ and } f(x_0) = 0 \}\text{.}
\end{equation*}
Let $K \in\compacts{X}$. Since $K$ is compact and $x \in X \mapsto m(x_0,x)$ is $1$-Lipschitz, hence continuous, there exists $R > 0$ such that $x\in K \implies m(x,x_0)\leq R$. Let $f \in \mulip(C_0(X),\mathsf{Lip}_m,\delta_{x_0})$. Then since $f$ is $1$-Lipschitz, we have $|f(x)|\leq|f(x)-f(x_0)|\leq R$ for all $x\in K$. Hence the set $\{ \indicator{K}f : f \in \mathfrak{L} \}$ is bounded in norm and equicontinuous on $K$ (since all its members can be seen as $1$-Lipschitz functions for the restriction of $m$ to $K$). Consequently, by Arz{\'e}la-Ascoli, the set $\indicator{K}\mathfrak{L}$ is norm precompact (for the norm of $L^\infty(X)$). By Theorem (\ref{qms-characterization-via-lu-topology-thm}) and since $\indicator{K}\mathfrak{L}\indicator{K} = \indicator{K}\mathfrak{L}$ as $C_0(X)$ is Abelian and $\indicator{K}$ is a projection, we conclude that $(C_0(X),\mathsf{Lip}_m,C_0(X))$ is a {\qms}.
\end{proof}

We note that, even for classical separable locally compact metric spaces, we can not use an arbitrary strictly positive element in Theorem (\ref{qms-characterization-separable-thm}). Indeed, if we equip $\R$ with its usual metric and denote the associated Lipschitz seminorm by $\mathsf{Lip}$, then we note that if $h : x\in \R \mapsto \frac{1}{\sqrt[4]{|x|+1}}$, then $h \in C_0(\R)$ is a strictly positive element, yet $h\mulip(C_0(\R),\mathsf{Lip},\delta_0)h$ is not totally bounded in norm --- it is in fact not even bounded. Yet, Theorem (\ref{classical-lcms-thm}) shows that $(C_0(\R),\mathsf{Lip},C_0(\R))$ is a {\qms}, which is obviously separable, and the Dirac probability measure $\delta_0$ at $0$ is obviously local with respect to this topography. Intuitively, strictly positive elements chosen in Theorem (\ref{qms-characterization-separable-thm}) must decay faster than the distance grows at infinity. This phenomenon will be illustrated again in the section below on bounded separable {\qms}, where indeed any strictly positive element could be used.

\subsection{Compact Quantum Metric Spaces}

To another extreme from the Abelian locally compact space, we find the class of compact quantum metric spaces introduced by Rieffel. We recall from \cite{Rieffel99} a characterization of these spaces. A pair $(\A,\Lip)$ is a compact quantum metric space if $\A$ is an order unit space and $\Lip$ is a seminorm defined on a dense subset of $\A$ containing the unit of $\A$ and such that:
\begin{enumerate}
\item $\Lip(a) = 0$ if and only if $a$ is a scalar multiple of the unit of $\A$,
\item The diameter of $\StateSpace(\A)$ is bounded for $\Kantorovich{\Lip}$,
\item The image of the set $\{ a\in \sa{\A} : \Lip(a)\leq 1 \}$ by the linear quotient surjection $\pi : \A \mapsto \bigslant{\A}{\C\unit_\A}$ is norm precompact for the quotient norm on $\bigslant{\A}{\C\unit_\A}$.
\end{enumerate}

These conditions are equivalent to $\Kantorovich{\Lip}$ metrizing the weak* topology on $\StateSpace(\A)$. Condition (3) can be replaced with the requirement that $\{a \in \sa{\A} : \Lip(a)\leq 1\text{ and } \|a\|\leq r\}$ is norm precompact when $r$ is the diameter of $(\StateSpace(\A,\Kantorovich{\Lip}))$.
Of course, our framework for {\qms s} requires us to work with C*-algebras rather than order-unit spaces. Within this limitation, we have:

\begin{theorem}
Let $(\A,\Lip)$ be a Lipschitz pair with $\A$ unital. An Abelian C*-subalgebra $\M$ of $\A$ is a topography for $\A$ if and only if $\M$ is unital with the same unit as $\A$, and the following are equivalent:
\begin{enumerate}
\item $(\A,\Lip)$ is a compact quantum metric space,
\item $(\A,\Lip,\C \unit_{\unital{\A}})$ is a {\qms},
\item For all Abelian C*-subalgebras $\M$ of $\A$ with $\unit_{\unital{\A}} \in \M$, the Lipschitz triple $(\A,\Lip,\M)$ is a {\qmss},
\item There exists an Abelian C*-subalgebra $\M$ of $\A$ with $\unit_{\unital{\A}} \in \M$ such that the Lipschitz triple $(\A,\Lip,\M)$ is a {\qms},
\item For all strictly positive elements $h\in \A$, the Lipschitz triple $(\A,\Lip,C^\ast(h))$ is a {\qmss},
\item There exists a strictly positive element $h\in \A$ such that the Lipschitz triple $(\A,\Lip,C^\ast(h))$ is a {\qms}.
\end{enumerate}
\end{theorem}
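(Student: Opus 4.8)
The plan is to reduce all six conditions to a single equivalence: for \emph{any} topography $\M$ of $\A$,
\[
(\A,\Lip,\M)\ \text{is a {\qms}}\quad\Longleftrightarrow\quad (\A,\Lip)\ \text{is a compact quantum metric space,}
\]
together with the fact that the right-hand condition forces $\A$ to be separable. First I would settle the preliminary characterization of topographies. If $\M$ is a topography it contains an approximate identity of $\A$, which, $\A$ being unital, converges in norm to $\unit_{\unital{\A}}=\unit_\A$; since $\M$ is norm closed, $\unit_\A\in\M$ and is clearly its unit. Conversely, if $\unit_\A\in\M$ then the constant net $(\unit_\A)$ is an approximate identity of $\A$ lying in $\M$, so $(\A,\M)$ is a topographic quantum space. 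From now on fix a topography $\M$; being unital, $\sigma(\M)$ is compact, so $\sigma(\M)\in\compacts{\sigma(\M)}$ and $\indicator{\sigma(\M)}=\unit_\A$, whence $\StateSpace(\A|\sigma(\M))=\StateSpace(\A)$ and in particular $\StateSpace(\A|\M)=\StateSpace(\A)$: every state of $\A$ is local.

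For the core equivalence, suppose first $(\A,\Lip,\M)$ is a {\qms}. By Proposition (\ref{obvious-tame-prop}) the set $\StateSpace(\A|\sigma(\M))=\StateSpace(\A)$ is tame, so the defining property of a {\qms} says that $\Kantorovich{\Lip}$ metrizes the relative weak* topology on $\StateSpace(\A)$, i.e.\ the weak* topology itself; the finite diameter demanded by regularity makes $\Kantorovich{\Lip}$ a genuine metric there. By the characterization recalled before the theorem, this is exactly the statement that $(\A,\Lip)$ is a compact quantum metric space. Conversely, assume $(\A,\Lip)$ is a compact quantum metric space and let $r\in[0,\infty)$ be the $\Kantorovich{\Lip}$-diameter of $\StateSpace(\A)$. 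Fix any $\mu\in\StateSpace(\A)=\StateSpace(\A|\M)$. For $a\in\mulip(\A,\Lip,\mu)$ and any state $\psi$ we have $|\psi(a)|=|\psi(a)-\mu(a)|\leq\Kantorovich{\Lip}(\psi,\mu)\leq r$, so $\|a\|_\A=\sup\{|\psi(a)|:\psi\in\StateSpace(\A)\}\leq r$; hence $\mulip(\A,\Lip,\mu)\subseteq\{a\in\sa{\A}:\Lip(a)\leq 1,\ \|a\|_\A\leq r\}$, which is norm precompact by the variant of Rieffel's third condition recalled above. Since $a\mapsto\indicator{K}a\indicator{K}$ is norm-contractive from $\A$ into $\A^{\ast\ast}$, every set $\indicator{K}\mulip(\A,\Lip,\mu)\indicator{K}$ is norm totally bounded in $\A^{\ast\ast}$, so $(\A,\Lip,\M)$ is a {\qms} by Theorem (\ref{qms-characterization-via-lu-topology-thm}). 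Finally, if $(\A,\Lip)$ is a compact quantum metric space then $\StateSpace(\A)$ is weak* compact and, being metrized by $\Kantorovich{\Lip}$, weak* metrizable, hence second countable, so $C(\StateSpace(\A))$ is separable; since $a\mapsto(\psi\mapsto\psi(a))$ is an isometric embedding of $\sa{\A}$ into $C(\StateSpace(\A))$, the space $\sa{\A}$, and therefore $\A$, is separable, so $(\A,\Lip,\M)$ is even a {\qmss}.

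With the core equivalence in hand the six conditions follow formally. Since $\C\unit_{\unital{\A}}$ is a topography and a {\qmss} is in particular a {\qms}, the core equivalence gives $(1)\Leftrightarrow(2)$ (taking $\M=\C\unit_{\unital{\A}}$), $(1)\Leftrightarrow(4)$, $(1)\Rightarrow(3)$, and $(3)\Rightarrow(2)\Rightarrow(1)$. For the $h$-statements, note that a strictly positive element $h$ of the unital C*-algebra $\A$ satisfies $\psi(h)>0$ for every state $\psi$, so the spectrum of $h$ lies in $(0,\infty)$ and $h$ is invertible; hence $h^{-1}\in C^\ast(h)$, $\unit_\A\in C^\ast(h)$, and $C^\ast(h)$ is a topography. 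Therefore the core equivalence yields $(1)\Rightarrow(5)$ (applied, with separability, to each $\M=C^\ast(h)$), $(5)\Rightarrow(6)$ is trivial (take $h=\unit_\A$, so $C^\ast(h)=\C\unit_{\unital{\A}}$), and $(6)\Rightarrow(4)$ is again an instance of the core equivalence; this closes every implication.

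The main obstacle is the core equivalence, and within it two points deserve care: first, the observation that $\StateSpace(\A)$ itself is the tame set $\StateSpace(\A|\sigma(\M))$, which is precisely what lets the {\qms} condition and Rieffel's compactness condition translate into one another; and second, the separability of $\A$, which rests on the isometric embedding $\sa{\A}\hookrightarrow C(\StateSpace(\A))$ together with the separability of $C$ of a compact metric space. Everything else---the topography characterization, the invertibility of strictly positive elements in the unital case, and the bookkeeping of the trivial implications---is routine.
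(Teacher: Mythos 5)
Your proof is correct, and it is organized differently from the paper's. The paper runs a cycle of implications --- $(1)\Rightarrow(2)$ via Theorem (\ref{qms-characterization-separable-thm}), $(2)\Rightarrow(3)$ via Theorem (\ref{qms-characterization-cc-thm}) and the boundedness of $a\mapsto cad$, $(3)\Rightarrow(4),(5),(6)$ trivially, $(6)\Rightarrow(2)$ again via Theorem (\ref{qms-characterization-cc-thm}), and finally $(2)\Rightarrow(1)$ by pushing $\mulip(\A,\Lip,\mu)$ through the quotient map $\A\to\bigslant{\A}{\C\unit_{\unital{\A}}}$ to verify Rieffel's third condition. You instead prove one ``core equivalence'' valid for every topography $\M$ and let all six statements fall out by bookkeeping; the two genuinely different ingredients are (i) your forward direction, which exploits the observation that $\StateSpace(\A)=\StateSpace(\A|\sigma(\M))$ is itself tame (Proposition (\ref{obvious-tame-prop})) so that the defining property of a {\qms} literally says $\Kantorovich{\Lip}$ metrizes the weak* topology, and then invokes the equivalence of that metrization with Rieffel's three conditions as recalled before the theorem --- avoiding the quotient-map argument entirely --- and (ii) your separability argument via the isometric embedding $\sa{\A}\hookrightarrow C(\StateSpace(\A))$ with $\StateSpace(\A)$ compact metrizable, which is actually tighter than the paper's passage through separability of $\A^\ast$. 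The backward direction (containment of $\mulip(\A,\Lip,\mu)$ in the norm-precompact bounded Lip-ball, then Theorem (\ref{qms-characterization-via-lu-topology-thm})) is essentially the paper's, as is the preliminary identification of topographies and the invertibility of strictly positive elements. What your approach buys is uniformity --- the same two-line argument handles $\C\unit_{\unital{\A}}$, arbitrary unital Abelian subalgebras, and $C^\ast(h)$ at once --- at the mild cost of leaning on the stated equivalence between Rieffel's conditions and weak* metrization rather than reverifying those conditions by hand.
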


\begin{proof}
If $\M$ is an Abelian C*-subalgebra of $\A$ such that $(\A,\M)$ is a topographic quantum space, then $\M$ contains an approximate unit for $\A$, and since $\A$ is unital, this approximate unit converges in norm to $\unit_{\unital{\A}}$. Since $\M$ is closed, $\M$ is unital with unit $\unit_{\unital{\A}}$. Conversely, if $\M$ is an Abelian C*-subalgebra of $\A$ with $\unit_{\unital{\A}} \in \M$ then by definition, $(\A,\M)$ is a topographic quantum space.

We also note that if $\A$ is unital and $h \in \A$ is strictly positive in $\A$, then $C^\ast(h)$ contains an approximate unit, so $\unit_{\unital{\A}} \in C^\ast(h)$ (in fact, in this case $h$ is invertible). Thus for all strictly positive $h \in \A$, the pair $(\A,C^\ast(h))$ is a topographic quantum space.

Assume now that $(\A,\Lip)$ is a compact quantum metric space. Note that in particular, $(\StateSpace(\A),\Kantorovich{\Lip})$ is a compact metric space, so it is separable. Since any continuous linear functional  on $\A$ is the linear combination of four states, we conclude $\A^\ast$ is separable, and thus $\A$ is separable.

Let $\mu$ be a state.The state space $\StateSpace(\A)$ has finite radius for the {\mongekant}, i.e. for some $r\in\R$: 
\begin{equation*}
\sup\{ |\varphi(a)-\mu(a)| : a\in\sa{\A}\text{ and }\Lip(a) \leq 1, \varphi \in \StateSpace(\A) \} = r\text{.}
\end{equation*}
Equivalently, $\sup \{ \sup\{ |\varphi(a)| : \varphi \in \StateSpace(\A) \} : a\in\mulip(\A,\Lip,\mu) \} = r$, i.e.:
\begin{equation*}
\mulip(\A,\Lip,\mu) \subseteq \{ a \in \sa{\A} : \Lip(a)\leq 1\text{ and } \|a\|_\A \leq r\}
\end{equation*}
with the latter set being norm totally bounded since $(\A,\Lip)$ is a compact quantum metric space. Thus $(\A,\Lip,\C \unit_{\unital{\A}})$ is a {\qmss} by Theorem (\ref{qms-characterization-separable-thm}). This proves that (1) implies (2).

Assume now that $(\A,\Lip,\C\unit_{\unital{\A}})$ is a {\qmss} and let $\M$ be any Abelian C*-subalgebra of $\A$ with $\unit_{\unital{\A}} \in \M$. By definition, $\M$ is a topography of $\A$. Let $c,d \in \M$ be compactly supported positive elements in $\M$. Let $\mu \in \StateSpace(\A) = \StateSpace(\A|\M)$. The map $\theta : a \in \A \mapsto c a d$ is linear and bounded with norm bounded above by $\|c\|\|d\|$, so the image of the norm-totally bounded set $\mulip(\A,\Lip,\mu)$ by $\theta$ is a totally bounded set in norm, and is equal to $c \mulip(\A,\Lip,\mu)d$. This proves (2) implies (3) by Theorem (\ref{qms-characterization-cc-thm}). Of course, (3) implies (4), (5) and (6).

Assume that for some strictly positive element $h$, the set $(\A,\Lip,C^\ast(h))$ is a {\qmss}. Then $\unit_{\unital{\A}}\in C^\ast(h)$ is compactly supported, thus any state $\mu$ of $\A$ is local in $(\A,C^\ast(h))$ and by Theorem (\ref{qms-characterization-cc-thm}), the set $\mulip(\A,\Lip,\mu)$ is totally bounded in norm. Thus (6) implies (2).

Last, assume that $(\A,\Lip,\C\unit_{\unital{\A}})$ is a {\qmss}. Therefore, $(\A,\Lip,\C\unit_{\unital{\A}})$ is regular, and since $\unit_{\unital{\A}}$ is a projection in $\C\unit_{\unital{\A}}$, the state space $\StateSpace(\A) = \{\varphi \in \StateSpace(\A) : \varphi(\unit_{\unital{\A}})=1\}$ has finite diameter for $\Kantorovich{\Lip}$. Thus, $\mulip(\A,\Lip,\mu)$ is norm bounded for any state $\mu$ by Theorem (\ref{qms-characterization-separable-thm}). Now, the image of $\mulip(\A,\Lip,\mu)$ by the quotient map $\pi: \A \mapsto \bigslant{\A}{\C\unit_{\unital{\A}}}$ of Banach spaces is thus norm precompact for the quotient norm on $\bigslant{\A}{\C\unit_{\unital{\A}}}$ and is easily checked to be the same as the image of $\{a\in \sa{\A} : \Lip(a)\leq 1\}$ by $\pi$. Thus by \cite{Rieffel99}, $(\A,\Lip)$ is a compact quantum metric space. Thus (2) implies (1) as desired.
\end{proof}

Thankfully, the reader can feel reassured that a compact {\qms} is indeed a compact quantum metric space and vice-versa, with no ambiguity in the terminology we introduced.

\subsection{Bounded Separable Quantum Metric Spaces}

Our work in \cite{Latremoliere05b} suggests that a \emph{bounded, non-compact} quantum locally compact separable metric space $(\A,\Lip)$ should be a Lipschitz pair such that:
\begin{condition}[Boundedness Condition]\label{boundedness-cond}
$\mathfrak{L} = \{a \in \sa{\A} : \Lip(a)\leq 1\}$ is norm bounded,
\end{condition}
\begin{condition}[Local Boundedness Condition]\label{total-boundedness-cond}
There exists a strictly positive element $h\in\A$ such that $h\mathfrak{L}h$ is totally bounded in norm.
\end{condition}
This definition for a bounded separable metric quantum space was used, for instance, in \cite{Bellissard10}. Note that unlike our new setup, the second condition is equivalent to asking for \emph{all} strictly positive elements $h \in \A$, the set $h\mathfrak{L}h$ is totally bounded in norm. We only give a name to these conditions to clarify our exposition in this section, since a priori we have now two notions of bounded quantum metric spaces.

This tentative definition has a few problems, and in fact, our attention in \cite{Latremoliere05b} was less on this notion and more on the notion of Bounded-Lipschitz distance, which metrizes the weak* topology on the whole state space even in the nonunital (and not necessarily bounded) case, as we discussed in the introduction. A drawback of this tentative definition is that it does not fit the compact case or, of course, the unbounded case at all (even for a compact quantum metric space, $\mathfrak{L}$ would not be norm bounded). 

Yet, Proposition (\ref{bounded-wu-eq-tu-prop}) shows that on bounded sets, the topographic uniform topology and the weakly uniform topology agree. Thus, we are able to show that bounded, noncompact {\qms s} are indeed given by this older approach of ours. We start with a first observation, which explains that in the bounded case, the notion of a tame set is redundant:

\begin{proposition}
Let $(\A,\Lip,\M)$ be a {\qms} where $\mathfrak{L} = \{ a \in \sa{\A} : \Lip(a)\leq 1\}$ is norm bounded. Then every weak* compact subset of $\StateSpace(\A)$ is tame.
\end{proposition}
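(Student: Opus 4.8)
The plan is to peel back the definition of tameness and observe that, under the boundedness hypothesis, it collapses onto the tightness of $\mathscr{K}$, which is automatic for weak* compact sets by Theorem \ref{tight-eq-precompact-thm}. First I would fix a local state $\mu \in \StateSpace(\A|\M)$; one exists because $(\A,\Lip,\M)$ is in particular a Lipschitz triple, so $(\A,\M)$ is a topographic quantum space and Proposition \ref{local-state-space-dense-prop} makes $\StateSpace(\A|\M)$ norm dense, hence nonempty, in $\StateSpace(\A)$. Writing $R = \sup\{\|a\|_\A : a\in\mathfrak{L}\}<\infty$, the identification $\mulip(\A,\Lip,\mu) = \{b - \mu(b)\unit_{\unital{\A}} : b\in\sa{\A},\ \Lip(b)\leq 1\}$ established in the proof of Proposition \ref{wasskant-alt-expression-prop} shows that every element of $\mulip(\A,\Lip,\mu)$ has $\unital{\A}$-norm at most $M := 2R$.

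Next, for any $\varphi\in\StateSpace(\A)$, any $K\in\compacts{\sigma(\M)}$, and any $a\in\mulip(\A,\Lip,\mu)$, I would decompose, inside $\A^{\ast\ast}$,
\[
a = \indicator{K}a\indicator{K} + \indicator{K}a(\unit_{\unital{\A}}-\indicator{K}) + (\unit_{\unital{\A}}-\indicator{K})a\indicator{K} + (\unit_{\unital{\A}}-\indicator{K})a(\unit_{\unital{\A}}-\indicator{K}),
\]
and bound each of the last three terms by $\|a\|_{\unital{\A}}\sqrt{\varphi(\unit_{\unital{\A}}-\indicator{K})}$ via the Cauchy--Schwarz inequality for $\varphi$, exactly as in the proof of Proposition \ref{obvious-tame-prop}. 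This gives $|\varphi(a-\indicator{K}a\indicator{K})|\leq 3M\sqrt{\varphi(\unit_{\unital{\A}}-\indicator{K})}$, and taking suprema over $a\in\mulip(\A,\Lip,\mu)$ and $\varphi\in\mathscr{K}$, using monotonicity of the square root to pull the supremum inside, yields
\[
\sup\{|\varphi(a-\indicator{K}a\indicator{K})| : a\in\mulip(\A,\Lip,\mu),\ \varphi\in\mathscr{K}\} \leq 3M\sqrt{\,\sup\{\varphi(\unit_{\unital{\A}}-\indicator{K}) : \varphi\in\mathscr{K}\}\,}.
\]

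Finally, since $\mathscr{K}$ is weak* compact, Theorem \ref{tight-eq-precompact-thm} says $\mathscr{K}$ is tight, i.e. $\lim_{K\in\compacts{\sigma(\M)}}\sup\{\varphi(\unit_{\unital{\A}}-\indicator{K}) : \varphi\in\mathscr{K}\} = 0$; combined with the displayed inequality and the continuity of the square root at $0$, the net defining tameness with respect to the chosen $\mu$ converges to $0$, so $\mathscr{K}$ is tame. I do not anticipate a genuine obstacle: the only points requiring a little care are the transfer of norm-boundedness from $\mathfrak{L}\subseteq\A$ to $\mulip(\A,\Lip,\mu)\subseteq\unital{\A}$ and the bookkeeping of the suprema, both of which are routine.
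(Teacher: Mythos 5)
Your proof is correct and follows essentially the same route as the paper: the three-term Cauchy--Schwarz decomposition of $\varphi(a-\indicator{K}a\indicator{K})$ giving the bound $3\|a\|_{\unital{\A}}\sqrt{\varphi(\unit_{\unital{\A}}-\indicator{K})}$, combined with tightness of weak* compact sets from Theorem \ref{tight-eq-precompact-thm} and the norm boundedness of the Lipschitz ball. You merely spell out two small points the paper leaves implicit, namely the existence of a local state and the transfer of the norm bound from $\mathfrak{L}$ to $\mulip(\A,\Lip,\mu)$ via translation by $\mu(b)\unit_{\unital{\A}}$, both of which are handled correctly.
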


\begin{proof}
Let $\mathscr{K}$  be a weak* compact subset of $\StateSpace(\A)$. By Theorem (\ref{tight-eq-precompact-thm}), the set $\mathscr{K}$ is tight. Now, for all $a\in \sa{\unital{\A}}$ with $\Lip(a)\leq 1$, for all $K\in\compacts{\sigma(\M)}$ and for all $\varphi \in \mathscr{K}$, we have by Cauchy-Schwarz:
\begin{equation}\label{bqms-eq0}
\begin{split}
|\varphi(a)-\varphi(\indicator{K}a\indicator{K})| &\leq |\varphi((\unit_{\unital{\A}}-\indicator{K})a\indicator{K})| +  |\varphi(\indicator{K}a(\unit_{\unital{\A}}-\indicator{K})|\\
&\quad +  |\varphi((\unit_{\unital{\A}}-\indicator{K})a(\unit_{\unital{\A}}-\indicator{K}))|\\
&\leq 3\sqrt{\varphi(\unit_{\unital{\A}} - \indicator{K})} \|a\|_{\unital{\A}} \text{.}
\end{split}
\end{equation}
Since $\mathfrak{L}_1$ is bounded in norm and $\mathscr{K}$ is tight, we conclude from Inequality (\ref{bqms-eq0}) that $\mathscr{K}$ is tame.
\end{proof}

We thus have the complete picture for {\qms} of bounded diameter:

\begin{theorem}
Let $(\A,\Lip)$ be a Lipschitz pair with $\A$ not unital. Then the following are equivalent:
\begin{enumerate}
\item $(\A,\Lip)$ satisfies Condition (\ref{boundedness-cond}) and Condition (\ref{total-boundedness-cond}),
\item There exists a strictly positive element $h$ such that $(\A,\Lip,C^\ast(h)
)$ is a {\qmss} whose state space has finite diameter for the {\mongekant} associated with $(\A,\Lip)$.
\end{enumerate}
\end{theorem}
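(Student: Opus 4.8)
The plan is to deduce both implications from the separable characterization Theorem~(\ref{qms-characterization-separable-thm}), after two elementary reductions. \emph{Reduction A} is the identity established inside the proof of Proposition~(\ref{wasskant-alt-expression-prop}): for any local state $\mu$ one has $\mulip(\A,\Lip,\mu) = \{a - \mu(a)\unit_{\unital{\A}} : a\in\mathfrak{L}\}$, where $\mathfrak{L} = \{a\in\sa{\A} : \Lip(a)\leq 1\}$; hence, for any $c\in\A$, the sets $c\,\mulip(\A,\Lip,\mu)\,c$ and $c\mathfrak{L}c$ each differ from the other only by the perturbation $\{\lambda c^2 : |\lambda|\leq\sup_{a\in\mathfrak{L}}\|a\|_\A\}$, which is norm compact once $\mathfrak{L}$ is norm bounded; so, assuming $\mathfrak{L}$ bounded, one of the two sets is norm totally bounded iff the other is. \emph{Reduction B} is that $\StateSpace(\A)$ has finite $\Kantorovich{\Lip}$-diameter iff $\mathfrak{L}$ is norm bounded: one direction is immediate since $\Kantorovich{\Lip}(\varphi,\psi) = \sup\{|\varphi(a)-\psi(a)| : a\in\mathfrak{L}\}\leq 2\sup_{a\in\mathfrak{L}}\|a\|_\A$, and the converse is where non-unitality of $\A$ is used, and which I expect to be the only non-formal point.

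For that converse, assume $\Kantorovich{\Lip}$ has diameter $D<\infty$ on $\StateSpace(\A)$. Since $\A$ is not unital, $0$ lies in the weak* closure of $\StateSpace(\A)$ (a standard fact; see \cite{Pedersen79}), so there is a net $(\psi_\alpha)$ of states weak*-converging to $0$. Then for every $a\in\mathfrak{L}$ and $\varphi\in\StateSpace(\A)$, $|\varphi(a)| \leq \Kantorovich{\Lip}(\varphi,\psi_\alpha) + |\psi_\alpha(a)| \leq D + |\psi_\alpha(a)|$, and letting $\alpha$ run we get $|\varphi(a)|\leq D$; hence $\|a\|_\A = \sup_{\varphi\in\StateSpace(\A)}|\varphi(a)|\leq D$, so $\mathfrak{L}$ is norm bounded by $D$. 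Without non-unitality one cannot anchor at $0$, and indeed this implication fails for compact quantum metric spaces --- this is the heart of the matter.

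It remains to assemble the two directions. $(1)\Rightarrow(2)$: given Conditions~(\ref{boundedness-cond}) and~(\ref{total-boundedness-cond}), with $\mathfrak{L}$ bounded and $h$ strictly positive with $h\mathfrak{L}h$ norm totally bounded, note first that $\A$ is separable, since $\A = \overline{\operatorname{span}}_\C(h\mathfrak{L}h)$ (using that $\overline{\operatorname{span}}_\C\mathfrak{L} = \A$ as the domain of $\Lip$ is dense in $\sa{\unital{\A}}$, and $\overline{h\A h} = \A$ as $h$ is strictly positive) and $h\mathfrak{L}h$ is separable; and that $(\A,C^\ast(h))$ is a topographic quantum space, a strictly positive element of $\A$ generating an approximate identity of $\A$ lying in $C^\ast(h)$. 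Choosing a local state $\mu\in\StateSpace(\A|C^\ast(h))$ (non-empty by Proposition~(\ref{local-state-space-dense-prop})), Reduction A gives that $h\,\mulip(\A,\Lip,\mu)\,h$ is norm totally bounded, so $(\A,\Lip,C^\ast(h))$ is a {\qmss} by Theorem~(\ref{qms-characterization-separable-thm}), and its state space has finite $\Kantorovich{\Lip}$-diameter by Reduction B. Conversely $(2)\Rightarrow(1)$: Reduction B yields Condition~(\ref{boundedness-cond}); and Theorem~(\ref{qms-characterization-separable-thm}) applied to the {\qmss} $(\A,\Lip,C^\ast(h))$ produces a strictly positive $g\in C^\ast(h)$ --- strictly positive in $\A$ as well --- with $g\,\mulip(\A,\Lip,\mu)\,g$ norm totally bounded for a local $\mu$, whence $g\mathfrak{L}g$ is norm totally bounded by Reduction A, giving Condition~(\ref{total-boundedness-cond}). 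The leftover verifications --- that $C^\ast(h)$ contains an approximate identity of $\A$, that $\StateSpace(\A|C^\ast(h))\neq\emptyset$, and that subsets and Minkowski sums of norm totally bounded sets are norm totally bounded --- are routine.
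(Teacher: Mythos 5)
Your proof is correct and follows essentially the same route as the paper: both directions are reduced to Theorem~(\ref{qms-characterization-separable-thm}) via the identification $\mulip(\A,\Lip,\mu)=\{a-\mu(a)\unit_{\unital{\A}}:a\in\mathfrak{L}\}$ (your Reduction A, which the paper carries out by explicit subsequence extraction instead of Minkowski sums) together with the observation that non-unitality lets one anchor at $0$ in the weak* closure of $\StateSpace(\A)$ to convert finite diameter into norm boundedness of $\mathfrak{L}$ (your Reduction B, which the paper phrases via $0\in\sigma(a)$). A minor bonus of your write-up is that you explicitly verify the separability of $\A$ needed to invoke Theorem~(\ref{qms-characterization-separable-thm}) in the direction $(1)\Rightarrow(2)$, a point the paper leaves implicit.
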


\begin{proof}
Let $(\A,\Lip,\M)$ be a {\qmss} with finite diameter, say $r\in \R$. Let $\mu \in \StateSpace(\A|\M)$. Let $a\in \sa{\A}$ with $\Lip(a)\leq 1$. By assumption, for all $\varphi\in\StateSpace(\A)$ we have:
\begin{equation*}
|\mu(a) - \varphi(a)| \leq r \text{.}
\end{equation*}
Since $\A$ is not unital, $0 \in \sigma(a)$, and thus there exists a net $(\varphi_\alpha)_{\alpha \in I}$ such that $\lim_{\alpha \in I} \varphi_\alpha(a) = 0$. Therefore, $\sup \{ |\mu(a)| : a\in\sa{\A}\text{ and }\Lip(a)\leq 1\} \leq r$.

On the other hand, the set $\mulip(\A,\Lip,\M)$ is bounded in norm by Proposition (\ref{wasskant-alt-expression-prop}), since the diameter of $\StateSpace(\A)$ is an upper bound for $\sup \{ |\varphi(a)| : \varphi \in \StateSpace(\A), a \in \mulip(\A,\Lip,\mu) \}$, which is of course $\sup \{\|a\|_{\unital{\A}} : a \in \mulip(\A,\Lip,\mu)\}$. 

Now, since $(\A,\Lip,\M)$ is a {\qmss}, by Theorem (\ref{qms-characterization-separable-thm}), there exists $h \in \M$, strictly positive, such that $h\mulip(\A,\Lip,\mu)h$ is norm precompact. Let $(a_n)_{n\in\N}$ be a sequence in $\mathfrak{L}_1$. Then $(h(a_n-\mu(a_n)\unit_{\unital{\A}})h)_{n\in\N}$ admits a norm convergent subsequence $(h(a_{m(n)}-\mu(a_{m(n)})\unit_{\unital{\A}})h)_{n\in\N}$, as a sequence in the norm precompact $h\mulip(\A,\Lip,\mu)h$. In turn, the sequence $(\mu(a_{m(n)})_{n\in\N}$ is bounded in $\R$ so it admits a convergent subsequence $(\mu(a_{m\circ k(n)})_{n\in\N}$. It is then immediate that $(ha_{m\circ k(n)}h)_{n\in\N}$ converges in norm. Hence $h\mathfrak{L} h$ is precompact in norm. 

Hence $(\A,\Lip)$ satisfies Conditions (\ref{boundedness-cond}) and (\ref{total-boundedness-cond}).

Conversely, assume that $(\A,\Lip)$ meets Conditions (\ref{boundedness-cond}) and (\ref{total-boundedness-cond}). Then for any strictly positive element $h \in \A$, the set $h\{ a \in \sa{\A} : \Lip(a)\leq 1\} h$ is precompact in norm, and $\{ a \in \sa{\A} : \Lip(a)\leq 1\}$ is bounded in norm. Thus, for any state $\mu$ we have $\sup \{ |\mu(a)| : a \in \sa{\A}\text{ and }\Lip(a)\leq 1\} < \infty$. By proceeding as in the first half of this proof, we conclude that $h \mulip(\A,\Lip,\mu) h$ is precompact in norm in $\unital{\A}$. Thus $(\A,\Lip,C^\ast(h))$ is a {\qmss} whose state space has finite diameter by construction.
\end{proof}

The reader may thus forget the so-called boundedness and local total boundedness conditions, as the concept of {\qms} offers a more coherent and general context in which bounded {\qmss s} fit unambiguously.

We now turn to two classes of examples which give unbounded state spaces in the {\mongekant} and are noncommutative, even simple in the next section.

\subsection{A First Quantum Metric on the Compact Operators C*-algebra}

This section presents a {\qms} structure over the simple C*-algebra of compact operators. This serves two purposes. First, this is an example of a simple C*-algebra to which our theory applies. In general, such constructions are left for our coming publications, as they are often involved and would considerably extend this paper. Thus, we take advantage of this simpler construction as an illustration of our theory in an accessible yet nontrivial case. Second, this example illustrates a very important point: even if one restricts attention to states which are supported by a compact projection within a {\qms}, it is still possible to find states at infinite distance. Thus, our definition for regularity of a Lipschitz triple is really as general as one can hope.

\begin{proposition}\label{compact-metric-1-prop}
Let $\mathfrak{K}$ be the C*-algebra of compact operators on a separable Hilbert space $\mathcal{H}$. Let $(\zeta_n)_{n\in\N}$ be a Hilbert basis for $\mathcal{H}$. Let $P_n$ be the orthogonal projection of $\mathcal{H}$ onto $\mathrm{span}\{\zeta_0,\ldots,\zeta_n\}$ for all $n\in \N$. Let $(\omega_n)_{n\in\N}$ be a sequence of real numbers and $\alpha > 0$ such that for all $n\in\N$ we have $\omega_n \geq \alpha$. Define, for any $n\in\N$:
\begin{equation}
\Lip : T \in \mathfrak{K} \longmapsto  \sup \{ \omega_n^{-1} \| P_n a P_n \|_{\mathfrak{K}} : n \in \N \} \text{.}
\end{equation}
Let $\mathfrak{D}$ be the C*-subalgebra of $\mathfrak{K}$ consisting of compact operators which are diagonal in $(\zeta_n)_{n\in\N}$, i.e. the C*-algebra generated by $(P_n)_{n\in\N}$. Note that $\mathfrak{D}$ is Abelian.

Then $(\mathfrak{K},\Lip,\mathfrak{D})$ is a {\qms}.
\end{proposition}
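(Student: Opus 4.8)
The plan is to identify $(\mathfrak{K},\Lip,\mathfrak{D})$ as a Lipschitz triple and then verify condition (3) of Theorem (\ref{qms-characterization-via-lu-topology-thm}), exploiting the fact that the Gel'fand spectrum of $\mathfrak{D}$ is discrete, so that every ``totally bounded'' we need reduces to ``norm bounded in finite dimensions''. First I would check the Lipschitz triple structure. Since $\|P_nTP_n\|_{\mathfrak{K}}\leq\|T\|_{\mathfrak{K}}$ and $\omega_n\geq\alpha$ for every $n$, we get $\Lip(T)\leq\alpha^{-1}\|T\|_{\mathfrak{K}}<\infty$ for all $T\in\mathfrak{K}$, so $\Lip$ is an everywhere-defined seminorm on $\sa{\mathfrak{K}}$; and $\Lip(T)=0$ forces $P_nTP_n=0$ for all $n$, hence $T=0$ because $\|P_nTP_n-T\|_{\mathfrak{K}}\to 0$ for every compact operator. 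Thus $\Lip$ is a norm on $\sa{\mathfrak{K}}$, and by the convention introduced after Proposition (\ref{wasskant-alt-expression-prop}) its extension $a+\lambda\unit_{\unital{\mathfrak{K}}}\mapsto\Lip(a)$ makes $(\mathfrak{K},\Lip)$ a Lipschitz pair. Since $\mathfrak{D}$ is Abelian and the increasing sequence $(P_n)_{n\in\N}$ is an approximate identity for $\mathfrak{K}$ contained in $\mathfrak{D}$, the pair $(\mathfrak{K},\mathfrak{D})$ is a topographic quantum space, so $(\mathfrak{K},\Lip,\mathfrak{D})$ is a Lipschitz triple.

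Next I would pin down the topography. Writing $Q_0=P_0$ and $Q_n=P_n-P_{n-1}$ for $n\geq 1$ (the rank-one projection onto $\C\zeta_n$), we have $\mathfrak{D}=\overline{\operatorname{span}}\{Q_n:n\in\N\}\cong c_0(\N)$; hence $\sigma(\mathfrak{D})$ is $\N$ with the discrete topology, $\compacts{\sigma(\mathfrak{D})}$ is the collection of finite subsets of $\N$, and each $\indicator{K}$ with $K$ finite is a finite-rank projection. Every finite $K$ lies in some $\{0,\dots,n\}$, with $\indicator{\{0,\dots,n\}}=P_n$. I would take the local state $\mu:T\mapsto\langle T\zeta_0,\zeta_0\rangle$, which satisfies $\mu(Q_0)=1$, so $\mu\in\StateSpace(\mathfrak{K}|\{0\})\subseteq\StateSpace(\mathfrak{K}|\mathfrak{D})$. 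By Proposition (\ref{wasskant-alt-expression-prop}),
\[
\mulip(\mathfrak{K},\Lip,\mu)=\bigl\{\,S-\mu(S)\unit_{\unital{\mathfrak{K}}}:S\in\sa{\mathfrak{K}},\ \Lip(S)\leq 1\,\bigr\}.
\]

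Then comes the key estimate. For $S\in\sa{\mathfrak{K}}$ with $\Lip(S)\leq 1$ and any $n$, the definition of $\Lip$ gives $\|P_nSP_n\|_{\mathfrak{K}}\leq\omega_n$; in particular $|\mu(S)|=|\langle P_0SP_0\zeta_0,\zeta_0\rangle|\leq\|P_0SP_0\|_{\mathfrak{K}}\leq\omega_0$. Since $P_n$ commutes with $\unit_{\unital{\mathfrak{K}}}$,
\[
\bigl\|P_n\bigl(S-\mu(S)\unit_{\unital{\mathfrak{K}}}\bigr)P_n\bigr\|_{\mathfrak{K}}\leq\|P_nSP_n\|_{\mathfrak{K}}+|\mu(S)|\leq\omega_n+\omega_0.
\]
Hence $P_n\,\mulip(\mathfrak{K},\Lip,\mu)\,P_n$ is a bounded subset of the finite-dimensional C*-algebra $P_n\mathfrak{K}^{\ast\ast}P_n$, so it is totally bounded in norm; and for any finite $K\subseteq\{0,\dots,n\}$ the set $\indicator{K}\,\mulip(\mathfrak{K},\Lip,\mu)\,\indicator{K}$ is the image of $P_n\,\mulip(\mathfrak{K},\Lip,\mu)\,P_n$ under the norm-decreasing map $B\mapsto\indicator{K}B\indicator{K}$, hence also totally bounded in $\mathfrak{K}^{\ast\ast}$. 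Thus condition (3) of Theorem (\ref{qms-characterization-via-lu-topology-thm}) holds, and $(\mathfrak{K},\Lip,\mathfrak{D})$ is a {\qms}.

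The computations are all routine; the one point deserving care is the identification of $\sigma(\mathfrak{D})$ as discrete, which is exactly what forces the compressions $\indicator{K}(\cdot)\indicator{K}$ to land in finite-dimensional algebras and thereby collapses the required total boundedness to the trivial bound $\|P_nSP_n\|_{\mathfrak{K}}\leq\omega_n$ read off from the Lip-norm. (One could instead argue via Theorem (\ref{qms-characterization-separable-thm}) with $h=\sum_n 2^{-n}\min(\omega_n^{-1},1)\,Q_n$; the spirit is the same, but there one must produce a genuine total-boundedness estimate, e.g. by dominating $hSh$ entrywise by a fixed Hilbert-Schmidt matrix whose tails vanish, rather than using the finite-dimensional shortcut.)
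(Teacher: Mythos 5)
Your proposal is correct and follows essentially the same route as the paper: verify the Lipschitz triple structure, take the local state $\mu(T)=\langle T\zeta_0,\zeta_0\rangle$ (equivalently $P_0TP_0=\mu(T)P_0$), observe that $\|P_naP_n\|$ is uniformly bounded over $\mulip(\mathfrak{K},\Lip,\mu)$ so that the compressions land in bounded subsets of finite-dimensional corners, and invoke Theorem (\ref{qms-characterization-via-lu-topology-thm}). If anything, you are slightly more careful than the paper in tracking the translation term $\mu(S)\unit_{\unital{\mathfrak{K}}}$, which yields the bound $\omega_n+\omega_0$ rather than $\omega_n$; this is a harmless refinement that does not change the argument.
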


\begin{proof}
Since $\omega_n$ is strictly positive for all $n\in\N$, we have $\Lip(a) = 0$ if and only if $a = 0$. Moreover, since $\omega_n^{-1} \leq \alpha^{-1}$ for all $n\in\N$, we have for all $a\in\mathfrak{K}$ that $\Lip(a) \leq \alpha^{-1} \|a\|_{\mathfrak{K}}$. Thus, the pair $(\mathfrak{K},\Lip)$ is a Lipschitz pair. Moreover, $(P_n)_{n\in\N}$ is an approximate unit for $\mathscr{K}$ in $\mathfrak{D}$, so $(\mathfrak{K},\Lip,\mathfrak{D})$ is a Lipschitz triple.

Let $\mu$ be the state of $\mathfrak{K}$ defined by $P_0 T P_0 = \mu(T)P_0$ for all $T\in\mathfrak{K}$. Since $\mu(P_0)=1$ and $P_0 \in \mathfrak{D}$, the state $\mu$ is local. Let $a\in \mulip(\mathfrak{K},\Lip,\mu)$. By definition, $\|P_n a P_n\|_{\mathfrak{K}} \leq \omega_n$ for all $n\in\N$. Thus $P_n\mulip(\mathfrak{K},\Lip,\mu)P_n$ is norm bounded in the finite dimensional matrix algebra $P_n\mathfrak{K}P_n$, so it is norm precompact. Now, if $Q \in \mathscr{D}$ is a projection, then $Q\mulip(\A,\Lip,\mu)Q \subseteq P_n\mulip(\A,\Lip,\mu)P_n$ for some $n\in\N$, so it is also norm precompact. By Theorem (\ref{qms-characterization-via-lu-topology-thm}), the Lipschitz triple $(\mathfrak{K},\Lip,\mathfrak{D})$ is a {\qms}.
\end{proof}

Now, keeping the notation of Proposition (\ref{compact-metric-1-prop}), let $\xi = \sum_{n\in \N} \frac{1}{n+1} \zeta_n$. An easy computation shows that, if $P_\xi$ is the orthogonal projection on $\C\xi$, the state $\varphi$, defined by $P_\xi T P_\xi = \varphi(T)P_\xi$ for all $T\in\mathfrak{K}$, is at infinite distance from the local state $\mu$, and thus $\{\varphi\}$ is not tame. However, $\varphi(P_\xi) = 1$ and $P_\xi$ is a compactly supported projection \cite{Akemann89}. Thus we see that requiring all weak* compact convex faces of $\StateSpace(\mathfrak{K})$ to be tame would be too strong and exclude this metric (and the one in the next section). This is quite different from the commutative case, where all weak* compact convex faces are tame.

Moreover, let $\mathfrak{G}$ be the C*-subalgebra of $\mathfrak{K}$ of operators diagonal in some Hilbert base containing the vector $\|\xi\|_{\mathcal{H}}^{-1}\xi$ with $\xi\in\mathcal{H}$ defined above. Then we see that $(\mathfrak{K},\Lip,\mathfrak{G})$ is \emph{not} a {\qms}, as it is in fact not even a regular Lipschitz triple. This illustrates the dependence of our concept on the choice of a topography, in general. 

\subsection{A metric on the Moyal plane}

We now present a {\qmss} structure on the C*-algebra of compact operators on a separable Hilbert space, seen as the Moyal plane. We shall heavily rely on the computations found in \cite{Cagnache11} for this section, and we refer to \cite{GraciaBondia88a, GraciaBondia88b, Varilly04} for detailed expositions on the Moyal plane as a noncommutative geometric object. The main result of this section is that our framework applies to this very important example. We only present the material we need to establish our result, as any reasonable presentation of the Moyal plane would go beyond the scope of this paper. 

Fix $\theta > 0$. The Moyal plane $\mathfrak{M}_\theta$ is informally the quantum phase space of the quantum harmonic oscillator. It is a strict quantization of the usual plane $\R^2$ toward the canonical Poisson bracket on $C_0(\R^2)$, rescaled by a ``Plank constant'' $\theta$.  There are many *-algebras describing observables on the Moyal plane, associated with various degrees of differentiability. However, our result is concerned with the C*-algebra of continuous observables on the Moyal plane, which is the C*-algebra $\mathfrak{M}_\theta = C^\ast (\R^2,\sigma_\theta)$ where:
\begin{equation*}
\sigma : (p_1,q_1),(p_2,q_2) \in \R^2\times \R^2 \longmapsto \exp(2i\pi \theta (p_1q_2 - p_2q_1))
\end{equation*}
is a bicharacter on $\R^2$. This C*-algebra is easily seen to be *-isomorphic to the C*-algebra $\mathfrak{K}$ of compact operators on $L^2(\R)$. However, we follow here the standard presentation of the Moyal plane, which uses a twisted product (rather than a twisted convolution) obtained by conjugating the twisted convolution by the Fourier transform.

We now turn to the technical elements from  \cite{GraciaBondia88a, GraciaBondia88b, Varilly04} needed for our result. Fix $\theta>0$. Let $\mathcal{S}$ be the space of $\C$-valued Schwartz functions on $\R^2$. For any $f,g\in \mathcal{S}$ we define:
\begin{equation}
f\star g : x\in \R^2 \mapsto \frac{1}{\left(\pi\theta\right)^2}\iint_{\R^2\times \R^2} f(x+y)g(x+z)\sigma(y,z)\overline{\sigma(y,x)} \, dy dz \text{.}
\end{equation}
The pair $(\mathcal{S},\star)$ is an associative *-algebra, and is a *-algebra  which we denote by $\mathcal{S}_\theta$ if one takes complex conjugation as the *-operation. The integral defines a trace on $\mathcal{S}_\theta$.

We write $x$ for the function $x : (t,u) \in \R^2 \mapsto t$ and $p$ for $p : (t,u)\in \R^2 \mapsto u$. Denote $z = \frac{1}{\sqrt{2}}(x+ip)$. Let $f_{0,0} = \frac{1}{\sqrt{2\pi\theta}}\exp\left(2\pi \frac{x^2+p^2}{\theta} \right)$ be the Gaussian density of expectation zero and variance $\theta$ normalized so that $\|f_{0,0}\|_{L^2(\R^2)} = 1$. For any $n,m\in\N$ we set $f_{n,m} = \frac{1}{\sqrt{\theta^{m+n}m!n!}}\overline{z}^{\star n} \star f_{0,0} \star z^{\star m}$. We note that all elements of $\mathcal{S}$ are elements of $L^2(\R^2)$, and we denote the standard inner product on $L^2(\R^2)$ by $\left<\cdot,\cdot\right>$.
We have the following essential observations, denoting by $\delta_n^m$ the Kronecker symbol for $n,m$:
\begin{enumerate}
\item The family $(f_{{n,n}})_{n\in\N}$ is an orthonormal basis for $L^2(\R^2)$,
\item For all $n,m,p,q$ we have $\left<f_{{n,m}},f_{p,q}\right> = \delta_n^p\delta_m^q$,
\item For all $n,m,p,q \in \N$ we have $f_{{n,m}} \star f_{p,q} = \delta_m^p f_{n,q}$.
\end{enumerate}
As a result, we have an important matrix representation of $(\mathcal{S},\star)$ \cite{GraciaBondia88a,Cagnache11}. Let $\mathcal{M}_\theta$ be the algebra of so-called rapid-decay infinite matrices, i.e. the set of doubly indexed sequences $(a_{{m,n}})_{m, n\in\N}$ of complex numbers such that for all $k\in \N$ the series $\sum_{m,n \in \N} \theta^{2k}(m+1)^{k}(n+1)^{k}|a_{{m,n}}|^2$ converges. Thanks to this regularity condition, one may define the product $(a_{{m,n}})_{m,n\in\N}*(b_{{m,n}})_{m,n\in\N} = \left(\sum_{j \in \N} a_{nj}b_{jm}\right)_{m,n\in\N}$ for all $(a_{{m,n}})_{m,n\in\N}, (b_{{m,n}})_{m,n\in\N} \in \mathcal{M}_\theta$. With the obvious definition for adjoint, $(\mathcal{M}_\theta,*)$ is a *-associative algebra. Moreover, the map:
\begin{equation}\label{upsilon-eq0}
\Upsilon_\theta : c \in \mathcal{S} \longmapsto \left(\left<c,f_{{m,n}}\right>\right)_{m,n\in\N} \in \mathcal{M}_\theta
\end{equation}
is an isomorphism of *-algebras. A first consequence of this isomorphism is that since $\mathcal{M}_\theta$ acts naturally on $\ell^2(\N)$ as Hilbert-Schmidt operators, hence compact operators, $\Upsilon$ defines essentially a representation of $\mathcal{S}_\theta$ on $\ell^2(\N)$ by compact operators. In particular, $\mathcal{M}_\theta$ inherits a C*-norm $\|\cdot\|_{\mathcal{M}_\theta}$, namely the operator norm for its action on $\ell^2(\N)$.

One may then define a C*-norm on $\mathcal{S}_\theta$ so that $\Upsilon_\theta$ is an isometry from this norm to $\|\cdot\|_{\mathcal{M}_\theta}$. As a result, the C*-completion $\mathfrak{M}_\theta$ of $\mathcal{S}_\theta$ is *-isomorphic to the C*-completion of $(\mathcal{M}_\theta,*)$, i.e. the compact operators.

On the other hand, there exists a natural spectral triple on $\mathfrak{M}_\theta$. First, let $\pi$ be the representation $f \in \mathcal{S}_\theta \mapsto [g \in L^2(\R^2) \mapsto f\star g]$ --- one checks this is a well-defined *-representation and can be extended to $\mathfrak{M}_\theta$. For any nonzero vector $u \in \R^2$, we write $\frac{\partial}{\partial u}$ for the directional derivative along $u$, seen as as unbounded operator of  $L^(\R^2)$. Denote by $\partial$ the partial derivative $\frac{\partial}{\partial \left(\frac{\sqrt{2}}{2},\frac{\sqrt{2}}{2}\right)} = \frac{\sqrt{2}}{2}\left(\frac{\partial}{\partial (1,0)} - i\frac{\partial}{\partial (0,1)}\right)$ on $L^2(\R^2)$. Then we define the following operators on $L^2(\R^2)\otimes \C^2$:
\begin{align*}
\forall c \in \mathfrak{M}_\theta \;\; \Pi(c)&= \begin{pmatrix}
\pi(c) & 0 \\ 0 & \pi(c) \end{pmatrix} & 
D &= -i\sqrt{2}\begin{pmatrix} 0 & \overline{\partial} \\ \partial & 0 \end{pmatrix} \text{.}
\end{align*}

Then by \cite{Varilly04} $(\mathcal{S}_\theta,\Pi,D)$ is a candidate for a spectral triple for the Moyal plane $\mathfrak{M}_\theta$. In particular, $\Pi$ is a *-representation of $\mathfrak{M}_\theta$ on $L^2(\R^2)\otimes \C^2$, and the set $\{ a \in \sa{\mathcal{S}_\theta} : \|[\Pi(a),D]\|_{\mathfrak{B}(L^2(\R^2\otimes\C^2))} < \infty \} = \sa{\mathcal{S}_\theta}$ is norm dense in $\mathfrak{M}_\theta$. Moreover, since $\Pi$ is faithful, one checks that for all $a\in \mathcal{S}_\theta$, if $[\Pi(a),D] = 0$ then $a = 0$ \cite{Varilly04}.

Our interest lies with the Lipschitz pair $(\mathfrak{M}_\theta,\Lip_\theta)$ where:
\begin{equation}\label{moyal-lip-eq}
\Lip_\theta : c \in \mathcal{S}_\theta \mapsto \|[D,\Pi(c)]\|_{\mathfrak{B}(L^2(\R^2)\otimes\C^2)}\text{.}
\end{equation}

We extract the following result from \cite{Cagnache11}, which contains the part of their computation which is enough for our conclusion.

\begin{theorem}[\cite{Cagnache11}]\label{Moyal-plane-all-done-thm}
Let $\theta>0$ and $a\in \sa{\mathcal{S}_\theta}$. Let:
\begin{equation*}
\begin{split}
(a_{{m,n}})_{m,n\in\N} &= \Upsilon_\theta(a) \text{,}\\
(\alpha_{{m,n}})_{m,n\in\N} &= \Upsilon_\theta(\partial a)\text{,}\\
(\beta_{{m,n}})_{m,n\in\N} &= \Upsilon_\theta(\overline{\partial} a)\text{,}
\end{split}
\end{equation*}
with $\Upsilon_\theta$ defined by Equation (\ref{upsilon-eq0}). Then:
\begin{enumerate}
\item \emph{(Proposition 3.3 in \cite{Cagnache11})} For all $m,n \in \N$ with $n+m>0$,  we have:
\begin{equation*}
a_{{n,m}} = a_{0,0}\delta_n^m + \sqrt{\theta} \sum_{k=0}^{\min\{m,n\}} \frac{\alpha_{n-k,m-k-1} + \beta_{n-k-1,m-k}}{\sqrt{n-k} + \sqrt{m-k}}
\end{equation*}
where $\delta$ is the Kronecker symbol,
\item \emph{(Lemma 3.4 in \cite{Cagnache11})} If $\Lip_\theta(a)\leq 1$ then for all $n,m\in \N$ we have:
\begin{equation*}
\max\{ |\alpha_{{n,m}}|,|\beta_{{n,m}}|\} \leq \frac{\sqrt{2}}{2}\text{.}
\end{equation*}
\end{enumerate}
\end{theorem}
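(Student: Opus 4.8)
The plan is to reconstruct the computation of Cagnache and Martinetti \cite{Cagnache11}, of which the two assertions are respectively their Proposition 3.3 and Lemma 3.4; both rest on the algebra of the Wigner basis $(f_{n,m})$ and on the fact that $\partial$, $\overline{\partial}$ are derivations for the twisted product $\star$.

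I would dispatch Assertion (2) first, since it is the quicker one. The starting observation is that, $\partial$ and $\overline{\partial}$ being derivations of $\star$, for any $g$ one has $[\partial,\pi(a)]g = \partial(a\star g) - a\star(\partial g) = (\partial a)\star g$, so $[\partial,\pi(a)] = \pi(\partial a)$ and likewise $[\overline{\partial},\pi(a)] = \pi(\overline{\partial} a)$. Substituting the block forms of $D$ and $\Pi(a)$ then gives
\begin{equation*}
[D,\Pi(a)] = -i\sqrt{2}\begin{pmatrix} 0 & \pi(\overline{\partial} a) \\ \pi(\partial a) & 0 \end{pmatrix}\text{,}
\end{equation*}
an off-diagonal operator whose norm is $\sqrt{2}\max\{\|\pi(\partial a)\|,\|\pi(\overline{\partial} a)\|\}$. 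Hence $\Lip_\theta(a)\leq 1$ forces $\|\pi(\partial a)\|\leq\frac{1}{\sqrt 2}$ and $\|\pi(\overline{\partial} a)\|\leq\frac{1}{\sqrt 2}$; since $\Upsilon_\theta$ carries the C*-norm of $\mathcal{S}_\theta$ isometrically onto $\|\cdot\|_{\mathcal{M}_\theta}$, and the operator norm of a matrix in $\mathcal{M}_\theta$ dominates the modulus of each of its coefficients, I would conclude $|\alpha_{n,m}| = |\langle\partial a,f_{n,m}\rangle|\leq\|\pi(\partial a)\|\leq\frac{1}{\sqrt 2} = \frac{\sqrt 2}{2}$, and similarly $|\beta_{n,m}|\leq\frac{\sqrt 2}{2}$.

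For Assertion (1), I would first record the ladder relations $\overline{z}\star f_{n,m} = \sqrt{\theta(n+1)}\,f_{n+1,m}$, $z\star f_{n,m} = \sqrt{\theta n}\,f_{n-1,m}$, $f_{n,m}\star z = \sqrt{\theta(m+1)}\,f_{n,m+1}$, $f_{n,m}\star\overline{z} = \sqrt{\theta m}\,f_{n,m-1}$, which follow from $f_{n,m}=(\theta^{n+m}n!m!)^{-1/2}\,\overline{z}^{\star n}\star f_{0,0}\star z^{\star m}$ together with $z\star f_{0,0}=0$, $f_{0,0}\star\overline{z}=0$ and $[z,\overline{z}]_\star=\theta$. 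Using the standard identities of Moyal calculus $\partial = -\theta^{-1}[\overline{z},\,\cdot\,]_\star$ and $\overline{\partial}=\theta^{-1}[z,\,\cdot\,]_\star$ on $\mathcal{S}_\theta$, this yields
\begin{equation*}
\partial f_{n,m} = \tfrac{1}{\sqrt\theta}\bigl(\sqrt{m}\,f_{n,m-1}-\sqrt{n+1}\,f_{n+1,m}\bigr), \qquad \overline{\partial} f_{n,m} = \tfrac{1}{\sqrt\theta}\bigl(\sqrt{n}\,f_{n-1,m}-\sqrt{m+1}\,f_{n,m+1}\bigr)\text{.}
\end{equation*}
Expanding $a=\sum_{n,m}a_{n,m}f_{n,m}$ and reading off coefficients then gives $\sqrt\theta\,\alpha_{n,m-1}=\sqrt{m}\,a_{n,m}-\sqrt{n}\,a_{n-1,m-1}$ and $\sqrt\theta\,\beta_{n-1,m}=\sqrt{n}\,a_{n,m}-\sqrt{m}\,a_{n-1,m-1}$, with the convention that a coefficient carrying a negative index is $0$; adding these produces the two-term telescoping relation
\begin{equation*}
a_{n,m}-a_{n-1,m-1}=\sqrt\theta\,\frac{\alpha_{n,m-1}+\beta_{n-1,m}}{\sqrt{n}+\sqrt{m}}\qquad(n+m>0)\text{.}
\end{equation*}
Iterating this $\min\{m,n\}$ times and noting that the terminal term is $a_{0,0}$ when $n=m$, and is recovered by the last summand (via $\sqrt\theta\,\beta_{p,0}=\sqrt{p+1}\,a_{p+1,0}$ and its $\alpha$-analogue on the other axis) when $n\neq m$, gives exactly the stated formula.

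The hard part will be purely the bookkeeping inside Assertion (1): getting the normalizations of the ladder relations right, correctly identifying which of $\partial,\overline{\partial}$ is $+\theta^{-1}[z,\,\cdot\,]_\star$ and which is $-\theta^{-1}[\overline{z},\,\cdot\,]_\star$ — a sign error or a swap here turns the crucial combination $\alpha_{n,m-1}+\beta_{n-1,m}$ into a different and wrong one — and organizing the boundary terms so that the single displayed formula subsumes both the diagonal and off-diagonal cases; the convention that coefficients with negative indices vanish is precisely what makes the upper summation limit $\min\{m,n\}$ uniform. Convergence of the series $a=\sum a_{n,m}f_{n,m}$ and of its rearrangements is not an issue, as everything takes place inside the rapid-decay matrix algebra $\mathcal{M}_\theta$.
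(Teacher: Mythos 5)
The paper does not prove this theorem: it is imported verbatim from \cite{Cagnache11} (their Proposition 3.3 and Lemma 3.4), so there is no internal proof to compare against. Your reconstruction is correct and follows the same route as the source. For Assertion (2), the chain $[D,\Pi(a)]=-i\sqrt{2}\left(\begin{smallmatrix}0 & \pi(\overline{\partial}a)\\ \pi(\partial a) & 0\end{smallmatrix}\right)$, the identity $\left\|\left(\begin{smallmatrix}0&A\\B&0\end{smallmatrix}\right)\right\|=\max\{\|A\|,\|B\|\}$, and the bound of a matrix entry by the operator norm are all sound; note that the last step uses $\pi(c)f_{p,q}=\sum_m c_{m,p}f_{m,q}$, so the $c_{m,n}=\left<c,f_{m,n}\right>$ really are matrix entries of $\pi(c)$ on each column subspace, and that $\|\pi(c)\|_{\mathfrak{B}(L^2(\R^2))}=\|\Upsilon_\theta(c)\|_{\mathcal{M}_\theta}$ because the left regular representation is a countable multiple of the $\ell^2(\N)$ representation --- you use both facts implicitly and it would be worth one sentence to say so. For Assertion (1), your ladder relations, the two coefficient identities $\sqrt{\theta}\,\alpha_{n,m-1}=\sqrt{m}\,a_{n,m}-\sqrt{n}\,a_{n-1,m-1}$ and $\sqrt{\theta}\,\beta_{n-1,m}=\sqrt{n}\,a_{n,m}-\sqrt{m}\,a_{n-1,m-1}$, and the telescoping all check out against small cases (e.g. $a_{1,0}=\sqrt{\theta}\beta_{0,0}$ and $a_{1,1}=a_{0,0}+\sqrt{\theta}(\alpha_{1,0}+\beta_{0,1})/2$). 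The only genuine wrinkle is one you half-address: when $n=m$, the terminal summand $k=\min\{m,n\}$ in the stated formula is formally $\frac{0}{0}$ (numerator killed by the negative-index convention, denominator $\sqrt{0}+\sqrt{0}$), so the convention must be that the entire term vanishes rather than merely that negative-index coefficients do; this is a defect of the statement as transcribed, not of your argument, but your write-up should make the convention explicit since your telescoping only produces the sum up to $k=\min\{m,n\}-1$ plus the boundary term.
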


We now prove that the spectral triple constructed in \cite{Varilly04, Cagnache11} provides a {\qms} structure to the Moyal plane. To this end, the obvious topography is given by choosing the maximal Abelian C*-subalgebra $\mathfrak{D}$ of $\mathfrak{M}_\theta$ generated by $\{f_{{n,n}} : n \in \N\}$, which is C*-algebra of compact diagonal operators for the base $(f_{{n,n}})_{n\in\N}$ of $L^2(\R^2)$. Note that $\mathfrak{D} \cong c_0(\Z)$. With this in mind, we have:

\begin{theorem}
Let $\theta > 0$ and $\mathfrak{M}_\theta$ be the $\theta$-Moyal plane. Then $(\mathfrak{M}_\theta,\Lip_\theta,\mathfrak{D})$ is a {\qmss}.
\end{theorem}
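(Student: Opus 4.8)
The plan is to verify condition (3) of Theorem~\ref{qms-characterization-separable-thm} --- which suffices since $\mathfrak{M}_\theta\cong\mathfrak{K}$ is separable --- by producing one strictly positive element $h\in\mathfrak{D}$ and one local state $\mu\in\StateSpace(\mathfrak{M}_\theta|\mathfrak{D})$ for which $h\,\mulip(\mathfrak{M}_\theta,\Lip_\theta,\mu)\,h$ is totally bounded in $\|\cdot\|_{\unital{\mathfrak{M}_\theta}}$. First I would set up the matrix picture: via $\Upsilon_\theta$ I identify $\mathfrak{M}_\theta$ with the compacts on $\ell^2(\N)$, $\mathfrak{D}$ with the diagonal compacts, and I write a general element as a matrix $(a_{m,n})_{m,n\in\N}$. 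The routine preliminary checks are that $(\mathfrak{M}_\theta,\Lip_\theta,\mathfrak{D})$ is a Lipschitz triple: $\Lip_\theta$ is a densely defined seminorm on $\sa{\unital{\mathfrak{M}_\theta}}$ (domain $\sa{\mathcal{S}_\theta}\oplus\R\unit_{\unital{\mathfrak{M}_\theta}}$) whose kernel is $\R\unit_{\unital{\mathfrak{M}_\theta}}$, because $\Pi$ is faithful and $[\Pi(c),D]=0$ forces $c=0$, while $\mathfrak{D}$ is Abelian and contains the approximate unit $\bigl(\sum_{n=0}^{N}f_{n,n}\bigr)_{N\in\N}$ of $\mathfrak{M}_\theta$. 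The local state I choose is $\mu\colon c\mapsto c_{0,0}$, the $(0,0)$ matrix coefficient, i.e. the vector state at $f_{0,0}$; it is local because $\mu(f_{0,0})=1$ and $f_{0,0}$ is the indicator $\indicator{\{x_0\}}$ of a point $x_0$ of the discrete spectrum $\sigma(\mathfrak{D})$, which is compact.

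The strictly positive element I choose is $h=\sum_{n\in\N}h_n f_{n,n}\in\mathfrak{D}$ with $h_n>0$, $h_n\to 0$, and --- crucially --- $\sum_{n\in\N}h_n^2(n+1)<\infty$; the concrete choice $h_n=(n+1)^{-2}$ works, and $h$ is strictly positive in $\mathfrak{M}_\theta$ as well by the remark preceding Theorem~\ref{qms-characterization-separable-thm}. The engine of the proof is a uniform bound on the matrix coefficients of elements of $\mulip(\mathfrak{M}_\theta,\Lip_\theta,\mu)$. Given $a$ there, the extension convention for $\Lip_\theta$ forces $a=c-c_{0,0}\unit_{\unital{\mathfrak{M}_\theta}}$ with $c\in\sa{\mathcal{S}_\theta}$ and $\Lip_\theta(c)=\Lip_\theta(a)\le 1$, the scalar being pinned down by $\mu(a)=0$; in particular the matrix of $a$ has vanishing $(0,0)$ coefficient. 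Feeding $c$ into Theorem~\ref{Moyal-plane-all-done-thm}(1) and cancelling the scalar part yields
\begin{equation*}
a_{n,m}=\sqrt{\theta}\sum_{k=0}^{\min\{m,n\}}\frac{\alpha_{n-k,m-k-1}+\beta_{n-k-1,m-k}}{\sqrt{n-k}+\sqrt{m-k}},
\end{equation*}
and Theorem~\ref{Moyal-plane-all-done-thm}(2) gives $|\alpha_{n,m}|,|\beta_{n,m}|\le\tfrac{\sqrt 2}{2}$ (terms with a negative index being omitted), so a direct estimate of the sum (comparing with $\sum_{j\ge 1}j^{-1/2}$) produces a constant $C_\theta$ depending only on $\theta$ with
\begin{equation*}
|a_{n,m}|\le C_\theta\sqrt{\min\{m,n\}+1}\qquad\text{for all }a\in\mulip(\mathfrak{M}_\theta,\Lip_\theta,\mu)\text{ and all }m,n.
\end{equation*}

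With this estimate, total boundedness follows by a cutoff argument. Let $P_N$ be the orthogonal projection onto $\mathrm{span}\{f_{0,0},\dots,f_{N-1,N-1}\}$. Since each $a$ is self-adjoint, $|a_{m,n}|=|a_{n,m}|$, and using $|a_{m,n}|^2\le C_\theta^2(m+1)$ I get, uniformly in $a\in\mulip(\mathfrak{M}_\theta,\Lip_\theta,\mu)$,
\begin{equation*}
\|(\unit_{\unital{\mathfrak{M}_\theta}}-P_N)hah\|_{\mathrm{HS}}^2=\sum_{m\ge N}h_m^2\sum_{n}h_n^2|a_{m,n}|^2\le C_\theta^2\Bigl(\sum_{n}h_n^2\Bigr)\sum_{m\ge N}h_m^2(m+1),
\end{equation*}
and symmetrically for $\|hah(\unit_{\unital{\mathfrak{M}_\theta}}-P_N)\|_{\mathrm{HS}}$; by the choice of $h$ both quantities tend to $0$ as $N\to\infty$, uniformly in $a$, so (using $\|\cdot\|\le\|\cdot\|_{\mathrm{HS}}$) for any $\varepsilon>0$ there is $N$ with $\|hah-P_NhahP_N\|\le\varepsilon/2$ for all such $a$. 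Since $\{P_NhahP_N:a\in\mulip(\mathfrak{M}_\theta,\Lip_\theta,\mu)\}$ has matrix entries bounded by $C_\theta\sqrt{N}\,\max_{m<N}h_m^2$, it is a bounded subset of the finite-dimensional space $P_N\mathfrak{K}P_N$, hence totally bounded; covering it by finitely many $\varepsilon/2$-balls covers $h\,\mulip(\mathfrak{M}_\theta,\Lip_\theta,\mu)\,h$ by finitely many $\varepsilon$-balls. Thus condition (3) of Theorem~\ref{qms-characterization-separable-thm} holds and $(\mathfrak{M}_\theta,\Lip_\theta,\mathfrak{D})$ is a {\qmss}. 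The main obstacle is really just the matrix-coefficient bound, and that is handed to us by Theorem~\ref{Moyal-plane-all-done-thm}; the only conceptual subtlety is that the \emph{uniform} bound grows like $\sqrt{\min\{m,n\}}$ along the diagonal, which is exactly why $h$ must decay fast enough for $\sum_n h_n^2(n+1)$ to converge --- the quantum analogue of the remark after Theorem~\ref{classical-lcms-thm} --- and also why one cannot expect $\mulip(\mathfrak{M}_\theta,\Lip_\theta,\mu)$ itself to be norm totally bounded, since $\StateSpace(\mathfrak{M}_\theta)$ has infinite $\Kantorovich{\Lip_\theta}$-diameter.
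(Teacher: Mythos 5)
Your proof is correct, but it reaches the conclusion by a genuinely different characterization than the paper does. The paper applies Theorem (\ref{qms-characterization-via-lu-topology-thm}) directly: since $\sigma(\mathfrak{D})$ is discrete, every compact $K$ is finite, $\indicator{K}$ is a finite-rank projection, and the uniform entrywise bound $|a_{n,m}|\leq 2\sqrt{2\theta}\sum_{k=0}^{\min\{m,n\}}(\sqrt{m-k}+\sqrt{n-k})^{-1}$ already shows each corner $\indicator{K}\mulip(\mathfrak{M}_\theta,\Lip_\theta,\mu)\indicator{K}$ is a bounded subset of a finite-dimensional C*-algebra, hence precompact --- no choice of a strictly positive element and no tail estimate are needed. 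You instead verify condition (3) of Theorem (\ref{qms-characterization-separable-thm}) by exhibiting an explicit $h=\sum_n (n+1)^{-2}f_{n,n}$ and controlling the tail of $h\,\mulip(\mathfrak{M}_\theta,\Lip_\theta,\mu)\,h$ in Hilbert--Schmidt norm; the key extra observation is that the uniform bound grows like $\sqrt{\min\{m,n\}+1}$ along the diagonal, so $h$ must satisfy $\sum_n h_n^2(n+1)<\infty$. Both routes are legitimate given the paper's equivalences (Theorems (\ref{qms-characterization-via-lu-topology-thm}) and (\ref{qms-characterization-separable-thm})); the paper's is shorter because the finite-corner criterion trivializes over a discrete topography, while yours costs more computation but buys a concrete witness for the separable characterization and makes visible the decay-rate phenomenon the paper only remarks on after Theorem (\ref{classical-lcms-thm}) --- that an arbitrary strictly positive element will not do when the quantum metric is unbounded. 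All the individual steps check out: the $(0,0)$-entry cancellation pinning down the scalar part, the comparison with $\sum_j j^{-1/2}$ (with the $k=\min\{m,n\}$ term vanishing when $m=n$ because both numerator indices are negative), and the passage from Hilbert--Schmidt to operator norm.
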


\begin{proof}
First, note that for all $a\in\sa{\mathcal{S}_\theta}$, if $\Lip_\theta(a) = 0$ then $a = 0$, and the domain of $\Lip_\theta$ is $\mathcal{S}_\theta$, which is dense in $\mathfrak{M}_\theta$. Moreover, $\mathfrak{D}$ contains the approximate unit $(\sum_{k=0}^n f_{{k,k}})_{n\in\N}$ for $\mathfrak{M}_\theta$, so $(\mathfrak{M}_\theta,\Lip_\theta,\mathfrak{D})$ is a Lipschitz triple.

Let $\mu$ be defined by $\mu(a) = \left<af_{0,0},f_{0,0}\right>$ for all $a\in \mathfrak{M}_\theta$. Note that $\mu$ is a local state, since $f_{0,0} \in \mathfrak{D}$ and $\mu(f_{0,0})=1$. 
Now, if $a \in \sa{\unital{\mathfrak{M}_\theta}}$ then $a = b + \lambda \unit_{\unital{\mathfrak{M}_\theta}}$ for some $\lambda \in \R$. By definition, $\Lip_\theta(a)\leq 1$ if and only if $b\in\mathcal{S}_\theta$ and $\Lip_\theta(b)\leq 1$. Let:
\begin{equation*}
\begin{split}
(a_{{n,m}})_{n,m\in\N} &= \Upsilon_\theta(a)\text{,}\\
(b_{{n,m}})_{n,m\in\N} &= \Upsilon_\theta(b)\text{,}\\
(\alpha_{{n,m}})_{n,m\in\N} &= \Upsilon(\partial a) = \Upsilon(\partial b)\text{,}\\
(\beta_{{n,m}})_{n,m\in\N} &= \Upsilon(\overline{\partial} a) = \Upsilon(\overline{\partial} b)\text{.}
\end{split}
\end{equation*}
By Theorem (\ref{Moyal-plane-all-done-thm}), we have:
\begin{equation*}
b_{{n,m}} = b_{0,0} \delta_n^m + \sqrt{\theta} \sum_{k=0}^{\min\{m,n\}} \frac{\alpha_{n-k,m-k-1} + \beta_{n-k-1,m-k}}{\sqrt{n-k} + \sqrt{m-k}}\text{.}
\end{equation*}
Hence, if $\mu(a) = 0$, then $\lambda = -b_{0,0}$ and we get:
\begin{equation*}
a = \sqrt{\theta} \sum_{k=0}^{\min\{m,n\}} \frac{\alpha_{n-k,m-k-1} + \beta_{n-k-1,m-k}}{\sqrt{n-k} + \sqrt{m-k}}\text{.}
\end{equation*}

By Theorem (\ref{Moyal-plane-all-done-thm}), we then see that if $a\in \sa{\unital{\mathfrak{M}_\theta}}$ with $\Lip_\theta(a)\leq 1$ and $\mu(a)= 0$ then, for all $n,m \in \N$ we have:
\begin{equation}\label{moyal-eq0}
|a_{{n,m}}| \leq 2\sqrt{2\theta}\left(\sum_{k=0}^{\min\{m,n\}} \frac{1}{\sqrt{m-k}+\sqrt{n-k}}\right)\text{.}
\end{equation}
Since $\sigma(\mathfrak{D}) = \Z$, a subset $K\subseteq \sigma(\mathfrak{D})$ is compact if and only if it is finite, and the projection $\indicator{K}$ is the finite rank projection $\sum_{n\in K} f_{{n,n}}$.We thus have, for all $a\in \mulip(\mathfrak{M}_\theta,\Lip_\theta,\mu)$:
\begin{equation*}
\begin{split}
\left\|\left(\sum_{n\in K} f_{{n,n}}\right) a \left(\sum_{n\in K} f_{{n,n}}\right)\right \|_{\mathfrak{M}_\theta} &\leq \sum_{n\in K}\sum_{m\in K}\|\Upsilon_\theta(f_{{n,n}}a f_{{m,m}})\|_{\mathfrak{B}(\ell^2(\N))} \\
&= \sum_{n\in K}\sum_{m\in K} |a_{{m,n}}|
\end{split}
\end{equation*}
which is uniformly bounded over $\mulip(\mathfrak{M}_\theta,\Lip_\theta,\mathfrak{D})$ by Inequation (\ref{moyal-eq0}). Since the set $\indicator{K}\mulip(\mathfrak{M}_{\theta},\Lip_\theta,\mathfrak{D})\indicator{K}$ is a bounded subset of the finite dimensional C*-algebra $\indicator{K} \mathfrak{M}_\theta \indicator{K}$ (of dimension $|K|^2$, to be precise), it is a precompact set. Thus by Theorem (\ref{qms-characterization-via-lu-topology-thm}), the Lipschitz triple $(\mathfrak{M}_\theta,\Lip_\theta,\mathfrak{D})$ is a {\qms}.
\end{proof}

The {\qms} $(\mathfrak{M}_\theta,\Lip_\theta,\mathscr{D})$ has infinite diameter, as shown for instance in Proposition (3.6) in \cite{Cagnache11}, where a sequence $(\omega_n)_{n\in\N}$ of pure states of the Moyal plane is proven to satisfy $\Kantorovich{\Lip_\theta}(\omega_0,\omega_n) = \sqrt{\frac{\theta}{2}}\sum_{k=1}^n \frac{1}{\sqrt{k}}$ for all $n\in\N$, so $(\omega_n)_{n\in\N}$ is an unbounded sequence in $(\StateSpace(\mathfrak{M}_\theta),\Kantorovich{\Lip_\theta})$. Thus, this example provides us with a solid, natural example for the theory of this paper. We leave the study of other examples for subsequent papers.


\bibliographystyle{amsplain}
\bibliography{../thesis}

\vfill

\end{document}